\newcommand{\lra}{\longrightarrow}
\newcommand{\os}{\overset}
\newcommand{\llm}{\underset{\longleftarrow}{\lim}\,}
\newcommand{\Hom}{\operatorname{Hom}}
\newcommand{\Ext}{\operatorname{Ext}}
\newenvironment{dedication}
{\vspace{6ex}\begin{quotation}\begin{center}\begin{em}}
			{\par\end{em}\end{center}\end{quotation}}
\theoremstyle{dgthm}
\newtheorem{theorem}{Theorem}
\newtheorem{corollary}{Corollary}
\newtheorem{lemma}{Lemma}
\theoremstyle{dgdef}
\newtheorem{definition}{Definition}
\begin{document}
\begin{dedication}
	\hspace{\fill}
	\vspace*{0.1cm}{Dedicated to the memory of academician George Chogoshvili}
\end{dedication}

\title{On Axiomatic Characterization of Alexander-Spanier Normal Homology Theory of General Topological Spaces}

\author{Vladimer Baladze, Anzor Beridze, Leonard Mdzinarishvili }

\address{Department of Mathematics,
Faculty of exact sciences and education,
	Batumi Shota Rustaveli State University,
	35, Ninoshvili St., Batumi,
	Georgia; e-mail:~vladimer.baladze@bsu.edu.ge}

\address{School of Mathematics,
	Kutaisi International University, Youth Avenue, 5th Lane,
	Kutaisi, 4600 Georgia; e-mail:~anzor.beridze@kiu.edu.ge}

\address{Department of Mathematics,
Faculty of Informatics and Control Systems,
Georgian Technical University,
77, Kostava St., Tbilisi,
Georgia; e-mail:~l.mdzinarishvili@gtu.ge}

\begin{abstract} The  Alexandroff-\v{C}ech normal cohomology theory \cite{Mor}, \cite{Bar}, \cite{Ba1},\cite{Ba2} is the unique continuous extension \cite{Wat} of the additive cohomology theory \cite{Mil}, \cite{BM1} from the category of polyhedral pairs $\mathcal{K}^2_{Pol}$ to the category of closed normally embedded, the so called, $P$-pairs of general topological spaces $\mathcal{K}^2_{Top}$. In this paper we define the Alexander-Spanier normal cohomology theory based on all normal coverings and show that it is isomorphic to the Alexandroff-\v{C}ech normal cohomology. Using this fact and methods developed in \cite{BM3} we construct an exact, the so called, Alexander-Spanier  normal homology theory on the category $\mathcal{K}^2_{Top},$ which is isomorphic to the Steenrod homology theory on the subcategory of compact pairs $\mathcal{K}^2_{C}.$ Moreover, we give an axiomatic characterization of the constructed homology theory.  	
\end{abstract}

\begin{keyword}
Universal Coefficient Formula; Continuity of exact homology; Steenrod homology; Alexander-Spanier normal homology. 
\MSC 55N07, 55N05
\end{keyword}

\maketitle

\section*{\bf Introduction} 
 On the category $\mathcal{K}^2_{CM}$ of pairs of compact metric spaces the exact homology theory  was defined by N. Steenrod \cite{St}, which is known as the classical Steenrod homology theory. J. Milnor \cite{Mil} constructed the exact homology theory on the category $\mathcal{K}^2_{C}$ of pairs of compact Hausdorff spaces, which is isomorphic to the Steenrod homology theory on the subcategory $\mathcal{K}^2_{CM}$ and which satisfies the so called  "modified continuity" property: if $ X_1 \leftarrow X_2 \leftarrow X_3 \leftarrow \dots $ is an inverse sequence of compact metric spaces with inverse limit $X$, then for each integer $n$ there is an exact sequence:
\begin{equation}\label{eq1}
0 \to {\varprojlim}^1 H_{n+1}(X_i) \xrightarrow{\text{{ $\beta$ }}} H_n(X) \xrightarrow{\text{$\gamma$}} \varprojlim H_n(X_i) \to 0,
\end{equation}
where $H_*$ is the Steenrod (Milnor) homology theory \cite{Mil}. There are exact homology theories defined by other authors \cite{Kol}, \cite{Cho} \cite{Sit}, \cite{BM}, \cite{In1}, \cite{EH1},\cite{EH2}, \cite{Mas}, \cite{Mdz3}, \cite{Skl}  which are isomorphic to the Steenrod homology theory on the category  $\mathcal{K}^2_{CM}$ and so, satisfy the modified continuity axiom. 

On the category $\mathcal{K}^2_{C}$ the axiomatic characterization is obtained by N. Berikashvili \cite{Ber}, L. Mdzinarishvili and Kh. Inasaridze \cite{InKh}, L. Mdzinarishvili \cite{Mdz1}, Kh. Inasaridze \cite{In2}. Consequently, in addition to the Eilenberg-Steenrod axioms one of the following axioms is required:

{\bf Universal Coefficient Formula}: For each pair $(X,A) \in \mathcal{K}^2_C$  and an abelian group $G$, there exists a  functorial exact sequence
\begin{equation}\label{eq12}
0 \os{}{\lra}
 \Ext(\check{H}^{n+1}(X,A);G) \os{}{\lra}  {H}_n(X,A;G) \os{}{\lra} \Hom(\check{H}^n(X,A);G) \lra 0,
\end{equation}
where $	\check{H}^{n+1}(-,-;G)$ is the  Alexandroff-\v{C}ech cohomology \cite{Ber}.

{\bf  Partial continuity:} Let $(X,A)$ be an inverse limit of an inverse system $\{(X_\alpha, A_\alpha), p_{\alpha, \alpha '} \}$ of compact polyhedra, then for each integer $n$ there is a functorial exact sequence:
\begin{equation}\label{eq3}
0 \to {\varprojlim}^1 H_{n+1}(X_\alpha, A_\alpha) \xrightarrow{\text{{ $\beta$ }}} H_n(X,A) \xrightarrow{\text{$\gamma$}} \varprojlim H_n(X_\alpha, A_\alpha) \to 0.
\end{equation}
\cite{InKh}, \cite{Mdz1}, \cite{Mrz1}, \cite{Mrz2}.

{\bf Continuity for an Injective Group:} Let $(X,A)$ be the inverse limit of an inverse system $\{(X_\alpha, A_\alpha), p_{\alpha, \alpha '} \}$ of compact polyhedra and $G$ be an injective abelian group, then for each integer $n$ there is an isomorphism:
\begin{equation}\label{eq4}
H_n(X,A;G) \approx \varprojlim H_n(X_\alpha,A_\beta ;G)
\end{equation}
\cite{In2}.

In the paper we will define the exact homology theory  $\bar{H}_*^N(-,-;G)$ using the method developed in \cite{BM1} and the Alexander-Spanier cochains based on the normal coverings. There is defined the continuity of an exact homology theory, which is the generalization of partial continuity. It is shown that any exact continuous theory is continuous for an injective abelian coefficient group and there is the Universal Coefficient Formula. Using the obtained properties we proved the uniqueness theorem.

 \section {Alexander-Spanier Normal Cohomology Theory}
 
  Let $X$ be a general topological space and $A$ be its subspace. Let us recall that a normal covering is defined as an open covering $\alpha=\left\{U_\alpha\right\}$, which admits a partition of unity $\left\{\varphi_{U_\alpha}|U_\alpha \in \alpha\right\}$ subordinated to $\alpha$, i.e. $\varphi_{U_\alpha} \ge 0$, $\sum \varphi_{U_\alpha}=1$ and for each $U_\alpha \in \alpha$ the support $supp \varphi_{U_\alpha}=\left\{x \in X |\varphi_{U_\alpha}(x)>0\right\}$ is contained in $U_\alpha$ \cite{MS}. $A$ is said to be $P$-embedded or normally embedded in $X$, if for each normal covering $\beta=\{V_\beta\}$ of $A$ there is a normal covering $\alpha=\{U_\alpha\}$ of $X$ such that $\alpha_{|A}=\{U_\alpha \cap A|U_\alpha \in \alpha\}$ is a refinement of the covering $\beta$. A pair $(X,A)$ of topological spaces is said to be closed $P$-pair if $A$ is closed and $P$-embedded in $X$.  Let $ \mathcal{K}^2_{Top}$ be the category of closed $P$-pairs \cite{MS}, \cite{Wat}.  Therefore, for paracompact spaces, any closed pair is  closed $P$-pair \cite{MS}. In this section, using the normal coverings we construct the Alexander-Spanier type cohomology theory, the so called, {\it Alexander-Spanier normal cohomology theory}. The methods of construction is the standard that is considered in \cite{Sp}, but we will recall and review some of them which are important for our purpose - to construct and axiomatically characterize an exact homology theory on the category $ \mathcal{K}^2_{Top}$. Note that, it is possible to define the Alexander-Spanier normal cohomology theory for each closed pair $(X,A)$, but it has "nice" properties for closed $P$-pairs.
  
  Let $X$ be a topological space, $G \in \mathcal{A}b$ be an abelian group. Consider the set $C^n(X;G)$ of all functions $\varphi:X^{n+1} \to G$,  where $X^{n+1}$ is $(n+1)$-fold product of the topological space $X$. If $\varphi_1, \varphi_2 \in C^{n+1}(X;G)$ and $(x_0,x_1, \dots , x_n) \in X^{n+1}$, then the addition in $C^n(X;G)$ can be defined by the formula:
 \begin{equation}\label{eq5}
 (\varphi_1+\varphi_2)(x_0,x_1, \dots ,x_n)=\varphi_1(x_0,x_1, \dots ,x_n)+\varphi_2(x_0,x_1, \dots ,x_n).
 \end{equation}
 It is clear that $C^n(X;G)$ is an abelian group by the given operation. The coboundary homomorphism $\delta:C^n(X;G)\to C^{n+1}(X;G)$ is defined by the formula:
\begin{equation}\label{eq6}
\delta(\varphi)(x_0,x_1, \dots ,x_{n+1})=\sum\limits_{i=0}^{n+1}(-1)^i\varphi (x_0, \dots , \hat{x}_i, \dots ,x_{n+1}),
\end{equation}  
where the coordinate with the hat $\hat{x}_i$ is omitted. In this case, it is known that $\delta \circ \delta =0$ and so, $C^*(X;G)=\left\{C^n(X;G), \delta\right\}$ is a cochain complex \cite{Sp}. 

Let $Cov_N(X)$ be the system af all normal coverings $\alpha=\left\{U_\alpha\right\}$ of $X$. An element $\varphi \in C^n(X;G)$ is said to be locally zero with respect to a normal covering (in short, $N$-locally zero), if there is a normal covering $\alpha=\{U_\alpha\}$ such that $\varphi$ vanishes on any $(n+1)$-tuple of $X$ which lies in some elements $U_\alpha $ of $\alpha$. Thus, if we define $\alpha^{n+1}=\bigcup\limits_{\alpha \in \mathcal{A}} U^{n+1}_{\alpha} \subset X^{n+1},$ then $\varphi$ vanishes on  $\alpha^{n+1}$. The subset of $C^n(X;G)$ consisting of all $N$-locally zero functions is a subgroup, denoted by $C^n_N(X;G)$. It is easy to check that, if $\varphi$ vanishes on $\alpha^{n+1}$, then $\delta (\varphi)$ vanishes on $\alpha^{n+2} \subset X^{n+2}$ and so, $C^*_N(X;G)=\left\{C^n_N(X;G), \delta\right\}$ is a cochain subcomplex of $C^*(X;G)$. Let $\bar{C}^*_N(X;G) \simeq {C}^*(X;G)/C^*_N(X;G)$ be the quotient cochain complex of $C^*(X;G)$ by $C^*_N(X;G)$. Let the $n$-dimensional cohomology group of the obtained cochain complex $\bar{C}^*_N(X;G)$ denote by $\bar{H}^*_N(X;G)$ and call {\it the Alexander-Spanier normal cohomology} of the topological space $X$ with coefficients in the group $G$. 

Let $f:X \to Y$ be a continuous map and $f^\# : {C}^*(Y;G) \to {C}^*(X;G)$ be the cochain map defined by the formula:
\begin{equation}\label{eq7}
f^\#(\varphi) (x_0,x_1,\dots ,x_n)=\varphi(f(x_0),f(x_1), \dots ,f(x_n)),~~~ \varphi \in C^n(Y;G), ~ x_0, x_1, \dots , x_n \in X.
\end{equation}
Note that if $\beta=\left\{V_\beta\right\}$  is a normal covering of $Y$, then $\alpha=f^{-1}(\beta)=\left\{f^{-1}(V_\beta)\right\}$ is a normal covering of $X$. If $\left\{\psi_{V_\beta}|V_\beta \in \beta \right\}$ is a partition of a unity subordinated to $\beta,$ then $\left\{\varphi_{U_\alpha}=\psi_{V_\beta}\circ f|U_\alpha=f^{-1}(V_\beta) \in \alpha \right\}$ is a partition of the unity subordinated to $\alpha$ \cite{MS}. On the other hand, if $\varphi \in C^*(Y;G)$ vanishes on $\beta^{n+1}$,  then $f^\#(\varphi)$ vanishes on $\alpha^{n+1}$ and so, $f^ \#$ maps $C^n_N(Y;G)$ into $C^n_N(X;G)$. Therefore, if $f:X \to Y$ is continuous, then it induces a cochain map
\begin{equation}\label{eq8}
f^\# : \bar{C}^*_N(Y;G) \to \bar{C}^*_N(X;G)
\end{equation}
and so, a homomorphism between the cohomology groups
\begin{equation}\label{eq9}
f^* : \bar{H}^*_N(Y;G) \to \bar{H}^*_N(X;G).
\end{equation}

Note that for each pair $(X,A) \in \mathcal{K}^2_{Top}$ the inclusion $i:A \to X$ induces an epimorphism $i^ \#: C^*(X;G) \to C^*(A;G),$ which maps $C^*_N(X;G)$ into $C^*_N(A;G)$. Consequently, it induces an epimorphism  
\begin{equation}\label{eq10}
i^\# : \bar{C}^*_N(X;G) \to \bar{C}^*_N(A;G).
\end{equation}
Let $\bar{C}^*_N(X,A;G)$ be the kernel of $i^\#$, then the cohomology groups of it is denoted by $\bar{H}^*_N(X,A;G)$ and will be said to be the Alexander-Spanier normal cohomology of pair $(X,A)$ with coefficients in group $G$.

For each closed $P$-pair  $(X,A) \in \mathcal{K}^2_{Top}$, denote the subcomplex of the cochain complex $C^*(X)$ consisting with functions $\varphi \in C^*(X)$, such that the restrictions on $A$ $\varphi_{|A}$ are $N$-locally zero by $C^*_N(X,A)$ . In this case, $C^*_N(X;G) \subset C^*_N(X,A;G)$ and $\bar{C}^*_N(X,A;G)\simeq C^*_N(X,A;G)/C^*_N(X;G)$. Note that this result is not true for any closed pair. It is essential that $A$ is closed and $P$-embedded to $X$. Our aim is to characterize the cochain complex $\bar{C}^*_N(X,A;G)$ as a direct limit. Consequently, we need the notion of a normal covering of a pair. 

A pair $(\alpha, \beta)$ is said to be a normal covering of a pair $(X,A)$ if $\alpha$ and $\beta$ are normal coverings of $X$ and $A$, respectively, and $\beta$ is a refinement of the restriction $\alpha_{|A}=\left\{U_\alpha \cap A| U_\alpha \in \alpha \right\}.$ A pair $(\alpha, \beta)$ is said to be  a refinement of $(\alpha', \beta')$ if $\alpha$ and $\beta$ are refinements of $\alpha'$ and $\beta'$, respectively. Let $Cov_N(X,A)$ be the direct set of all normal coverings $(\alpha, \beta)$ of pair $(X,A)$. Let for each $(\alpha, \beta) \in Cov_N(X,A)$, $X(\alpha)$ and $A(\beta)$ be abstract simplicial complexes (Vietorisian complexes) whose vertices are points of $X$ and $A$ and whose simplexes are fine subsets $F_X=\{x_1, x_2, \dots ,x_k\}$ of $X$ and  $F_A=\{a_1, a_2, \dots ,a_k\}$ of $A$ such that there is some $U_\alpha \in \alpha$ and $V_{\beta} \in \beta$ containing  $F_X \subset U_\alpha$ and $F_A \subset V_{\beta},$ respectively. By the definition  $A(\beta)$  is a subcomplex of $X(\alpha).$  Consider the corresponding cochain complex  $C^*(\alpha, \beta ;G).$ Note that an element $\varphi_\alpha \in C^n(\alpha, \beta ;G)$ is a function $\varphi_\alpha: \alpha ^{n+1} \to G$ which vanishes on ${\beta} ^{n+1} \subset \alpha ^{n+1}.$ Therefore, the system $Cov_N(X,A)$ induces a direct system $\left\{C^*(\alpha, \beta ;G)\right\}$ of the cochain complexes and so,  we have the limit cochain complex
\begin{equation}\label{eq11}
\varinjlim \left\{C^*(\alpha,\beta;G)\right\}.
\end{equation}
We need to show that this limit cochain complex is canonically isomorphic to $\bar{C}^*_N(X,A;G)$. If $\varphi \in C^*_N(X,A;G),$ then there is a normal covering $\gamma=\left\{W_\gamma\right\}$ of $A$ such that $\varphi$ vanishes on ${\gamma}^{n+1}.$ $A$ is closed and $P$-embedded in $X$ and so, there exists a normal covering $\alpha$ of $X$ such that $\beta=\alpha_{|A}=\left\{U_\alpha \cap A| U_\alpha  \in \alpha \right\}$ is a refinement of $\gamma$. Therefore, $(\alpha, \beta)$ is a normal covering of $(X,A)$ and the restriction $\varphi_{\alpha}:\alpha ^{n+1}\to G$ of the map $\varphi :X^{n+1}\to G$ vanishes on $\beta ^{n+1} \subset \gamma ^{n+1}$ and so, $\varphi_\alpha \in C^*(\alpha, \beta; G)$. Passing limit, we obtain the homomorphism
\begin{equation}\label{eq12}
\lambda:C^*(X,A;G)\to \varinjlim \left\{C^*(\alpha, \beta;G)\right\},
\end{equation}
which is the canonical cochain map.

\begin{theorem}\label{thm.1}
	The canonical cochain map
	\begin{equation}\label{eq13}
	\lambda:C^*_N(X,A;G)\to \varinjlim \left\{C^*(\alpha,\beta;G)\right\}
	\end{equation}
	is an epimorphism and has the kernel equal to $C^*_N(X;G)$.
\end{theorem}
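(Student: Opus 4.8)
The plan is to prove the theorem as the assertion that there is a short exact sequence of cochain complexes
\[
0 \to C^*_N(X;G) \to C^*_N(X,A;G) \xrightarrow{\lambda} \varinjlim\{C^*(\alpha,\beta;G)\} \to 0 ,
\]
treating surjectivity and the kernel separately and working in a fixed degree $n$ throughout (the compatibility of $\lambda$ with $\delta$ having already been recorded, so that the degreewise statements assemble into the claim about cochain complexes). The one structural fact I would fix at the outset is the concrete description of the limit: since the bonding maps are the restrictions $\varphi_\alpha \mapsto \varphi_\alpha|_{\alpha'^{n+1}}$ induced by the inclusions $\alpha'^{n+1}\subseteq\alpha^{n+1}$ coming from refinement, a class $[\psi_\alpha]$ is zero in $\varinjlim\{C^n(\alpha,\beta;G)\}$ exactly when $\psi_\alpha|_{\alpha'^{n+1}}=0$ for some refinement $(\alpha',\beta')\geq(\alpha,\beta)$, and two representatives coincide iff they agree after restriction to a common refinement. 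I would also record once that the restriction $\alpha|_A$ of a normal covering $\alpha$ of $X$ is a normal covering of $A$ (restrict the subordinate partition of unity), so that $(\alpha,\alpha|_A)\in Cov_N(X,A)$.

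For surjectivity, given a representative $\psi_\alpha\in C^n(\alpha,\beta;G)$, I would define $\varphi\in C^n(X;G)$ by extension by zero: set $\varphi=\psi_\alpha$ on $\alpha^{n+1}$ and $\varphi=0$ on $X^{n+1}\setminus\alpha^{n+1}$. The key check is that $\varphi\in C^n_N(X,A;G)$, i.e. that $\varphi|_A$ is $N$-locally zero: since $\beta$ refines $\alpha|_A$ we have $\beta^{n+1}\subseteq\alpha^{n+1}$, and $\psi_\alpha$ vanishes on $\beta^{n+1}$ by the very definition of $C^n(\alpha,\beta;G)$, so $\varphi|_A$ vanishes on $\beta^{n+1}$, and $\beta$ is a normal covering of $A$. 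To identify $\lambda(\varphi)$ with $[\psi_\alpha]$, I would compute $\lambda(\varphi)$ from the defining data (any normal covering $\gamma$ of $A$ with $\varphi|_A$ vanishing on $\gamma^{n+1}$, together with a covering $\alpha'$ of $X$ provided by $P$-embedding), obtaining the representative $\varphi_{\alpha'}=\varphi|_{\alpha'^{n+1}}$; passing to a common refinement $(\alpha'',\beta'')$ of $(\alpha',\beta')$ and $(\alpha,\beta)$, both $\varphi_{\alpha'}$ and $\psi_\alpha$ restrict on $\alpha''^{n+1}\subseteq\alpha^{n+1}$ to $\varphi|_{\alpha''^{n+1}}$, hence agree, so $\lambda(\varphi)=[\psi_\alpha]$.

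For the kernel, the inclusion $C^n_N(X;G)\subseteq\ker\lambda$ is direct: if $\varphi$ vanishes on $\alpha_0^{n+1}$ for a normal covering $\alpha_0$ of $X$, then $(\alpha_0,\alpha_0|_A)\in Cov_N(X,A)$ and the representative $\varphi_{\alpha_0}=\varphi|_{\alpha_0^{n+1}}$ is already $0$, so $\lambda(\varphi)=0$. Conversely, suppose $\varphi\in C^n_N(X,A;G)$ with $\lambda(\varphi)=0$; choosing data $(\alpha',\beta')$ representing $\lambda(\varphi)$ by $\varphi_{\alpha'}=\varphi|_{\alpha'^{n+1}}$, the vanishing of the class yields a refinement $(\alpha'',\beta'')$ with $\varphi|_{\alpha''^{n+1}}=0$. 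Since $\alpha''$ is a normal covering of $X$, this says precisely that $\varphi$ is $N$-locally zero on $X$, i.e. $\varphi\in C^n_N(X;G)$. Hence $\ker\lambda=C^n_N(X;G)$.

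I do not expect a deep obstacle: once the extension-by-zero construction is in place the argument is bookkeeping in the directed system. The two points that must be handled honestly are (i) the identifications in the direct limit — in particular that $\lambda$ is independent of the auxiliary choices of $\gamma$ and $\alpha'$, which is checked by passing to common refinements and is what legitimizes both computations of $\lambda(\varphi)$ above; and (ii) the precise role of the closed $P$-pair hypothesis, which enters not through the surjectivity construction itself but upstream, in guaranteeing that $\lambda$ is defined at all: $P$-embedding is exactly what ensures that every function whose restriction to $A$ is $N$-locally zero is captured by a genuine pair covering $(\alpha,\alpha|_A)$, so that the pairs in $Cov_N(X,A)$ are cofinal enough to detect $N$-local vanishing on $A$.
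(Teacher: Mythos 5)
Your proof is correct and takes essentially the same route as the paper's: surjectivity via extension by zero of a representative $\psi_\alpha$ (checking that the extension lies in $C^n_N(X,A;G)$ because it vanishes on $\beta^{n+1}$), and the kernel identified by noting that vanishing of a class in the direct limit means vanishing of the restriction on $\alpha''^{n+1}$ for some normal covering $\alpha''$ of $X$, which is exactly $N$-local vanishing. Your extra bookkeeping (the explicit description of equality and vanishing in the colimit, independence of the auxiliary choices of $\gamma$ and $\alpha'$, and the remark on where $P$-embedding is used) only spells out steps the paper's more compressed argument leaves implicit.
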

\begin{proof} To prove that $\lambda$ is an epimorphism, let $\varphi_\alpha \in C^n(\alpha, \beta;G)$ and define $\varphi \in C^n(X;G)$ by the following formula:
	\begin{equation}\label{eq14}
	\varphi(x_0,x_1, \dots , x_n) =
	\begin{cases}
	\varphi_\alpha(x_0,x_1, \dots , x_n),~ if~ x_0,x_1, \dots , x_n \in U_\alpha,~ where~  U_\alpha \in \alpha\\
	0,~ otherwise.
	\end{cases}
	\end{equation}
The map $\varphi :X^{n+1} \to G$ vanishes on the $\beta ^{n+1} \subset A^{n+1}$ and so, $\varphi \in C^*_N(X,A;G)$. By definition $\varphi_\alpha=\varphi _{|\alpha^{n+1}}$ and so, $\lambda$ is an epimorphism. On the other hand, an element $\varphi \in C^n(X;G)$ is in the kernel of $\lambda$ if and only if there is some $\alpha$ such that $\lambda(\varphi)=\varphi_\alpha=\varphi _{|\alpha^{n+1}}=0$. Thus $\lambda(\varphi)=0$ if and only if  $\varphi \in C^n_N(X;G).$
\end{proof}

\begin{corollary}\label{Cor.1} For the cohomology theory $\bar{H}^*_N(X,A;G)$ there is a canonical isomorphism
\begin{equation}\label{eq15}
\bar{H}^*_N(X,A;G) \simeq \varinjlim \left\{{H}^n(C^*(\alpha, \beta;G)\right\},
\end{equation}
where $(\alpha, \beta) \in Cov_N(X, A).$
\end{corollary}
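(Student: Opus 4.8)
The plan is to deduce the corollary directly from Theorem~\ref{thm.1} together with the standard fact that the cohomology functor commutes with direct limits over a directed set. First I would invoke Theorem~\ref{thm.1}: since the canonical cochain map $\lambda$ is an epimorphism whose kernel is exactly $C^*_N(X;G)$, the first isomorphism theorem, applied in each degree and checked to be compatible with the coboundary $\delta$ (which holds because $\lambda$ is a cochain map), yields an isomorphism of cochain complexes
\begin{equation*}
C^*_N(X,A;G)/C^*_N(X;G) \simeq \varinjlim\left\{C^*(\alpha,\beta;G)\right\}.
\end{equation*}
Recalling the identification $\bar{C}^*_N(X,A;G) \simeq C^*_N(X,A;G)/C^*_N(X;G)$ established before Theorem~\ref{thm.1}, this gives a canonical isomorphism of cochain complexes $\bar{C}^*_N(X,A;G) \simeq \varinjlim\{C^*(\alpha,\beta;G)\}$.

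Next I would pass to cohomology. Applying $H^*$ to both sides of the above isomorphism, and using that $\bar{H}^*_N(X,A;G)$ is by definition the cohomology of the complex $\bar{C}^*_N(X,A;G)$, I obtain
\begin{equation*}
\bar{H}^*_N(X,A;G) \simeq H^*\left(\varinjlim\left\{C^*(\alpha,\beta;G)\right\}\right).
\end{equation*}
The remaining step, which I expect to be the crux of the argument, is to interchange cohomology with the direct limit, namely to establish
\begin{equation*}
H^*\left(\varinjlim\left\{C^*(\alpha,\beta;G)\right\}\right) \simeq \varinjlim\left\{H^n\left(C^*(\alpha,\beta;G)\right)\right\}.
\end{equation*}

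This is where the directedness of $Cov_N(X,A)$ is essential: any two normal coverings of the pair $(X,A)$ admit a common refinement, so the index set is directed, and consequently the direct limit functor over $Cov_N(X,A)$ is exact on the category $\mathcal{A}b$. An exact functor commutes with the formation of kernels and images, hence with the passage to cohomology of a cochain complex; concretely, I would note that $\varinjlim$ preserves $\Ker\delta$ and $\IIm\delta$ in each degree and therefore carries the quotient $\Ker\delta/\IIm\delta$ onto the corresponding colimit of cohomology groups. Combining the three displayed isomorphisms yields the asserted isomorphism. The one point to handle with care is naturality: since each isomorphism above is canonical and compatible with $\delta$ and with the bonding maps of the direct system, the resulting identification is genuinely canonical rather than merely abstract, which completes the proof.
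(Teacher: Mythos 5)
Your proposal is correct and follows exactly the route the paper intends: Theorem~\ref{thm.1} plus the first isomorphism theorem identifies $\bar{C}^*_N(X,A;G)$ with $\varinjlim\left\{C^*(\alpha,\beta;G)\right\}$ as cochain complexes, and the directedness of $Cov_N(X,A)$ makes $\varinjlim$ exact on $\mathcal{A}b$, so it commutes with passage to cohomology. The paper leaves these standard steps implicit, but your argument supplies precisely the missing details, so nothing further is needed.
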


We need to see that the constructed cohomology is isomorphic to the Alexandroff-\v{C}ech cohomology based on all normal coverings \cite{Mor}, \cite{Bar}, \cite{Wat}, \cite{Ba1}, \cite{Ba2}. For this aim, consider any normal covering $(\alpha , \beta )$  of $(X,A) \in \mathcal{K}^2_{Top}$ and let $(R_1,R_2)$ be the pair of relations defined in the following way:
\begin{equation}\label{eq16}
xR_1U_\alpha \iff x \in U_\alpha , ~~~~~\forall x \in X, U_\alpha \in \alpha 
\end{equation}
\begin{equation}\label{eq17}
aR_2V_\beta \iff a \in V_\beta, ~~~~~\forall a \in A, V_\beta \in \beta. 
\end{equation}
In this case, the pair $(R_1,R_2)$ defines the pair of abstract simplicial complexes $\left(X(\alpha),A(\beta)\right)$ and $\left(N(\alpha),N(\beta)\right)$, where $\left(X(\alpha),A(\beta)\right)$ is the pair of Vietorisian complexes and $\left(N(\alpha),N(\beta)\right)$ is the pair of nerves \cite{Dow}. By Theorem 1 \cite{Dow} the homology groups  of the obtained complexes are isomorphic and so, by Corollary \ref{Cor.1} we obtain:
\begin{corollary}\label{Cor.2} For each closed $P$-pair $(X,A) \in \mathcal{K}^2_{Top}$ there is an isomorphism 
\begin{equation}\label{eq18}
\bar{H}^*_N(X,A;G) \simeq \check{H}^*_N(X,A;G),
\end{equation}
where $\check{H}^*_N(X,A;G)$ is the Alexandroff-\v{C}ech cohomology group based on all normal coverings.
\end{corollary}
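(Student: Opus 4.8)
The plan is to realize both sides of \eqref{eq18} as direct limits, indexed by the \emph{same} directed set $Cov_N(X,A)$, of the cohomologies of two comparable cochain complexes, and then to compare these limits termwise. By Corollary \ref{Cor.1} the left-hand side is
\[
\bar{H}^*_N(X,A;G) \simeq \varinjlim \left\{H^n(C^*(\alpha,\beta;G))\right\},
\]
where $C^*(\alpha,\beta;G)$ is the cochain complex of the pair of Vietorisian complexes $(X(\alpha),A(\beta))$; by definition the Alexandroff-\v{C}ech cohomology $\check{H}^*_N(X,A;G)$ is the analogous direct limit $\varinjlim\left\{H^n(N(\alpha),N(\beta);G)\right\}$ built from the pairs of nerves $(N(\alpha),N(\beta))$. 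Thus it suffices to produce an isomorphism of the two direct systems over $Cov_N(X,A)$ and pass to the limit.

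For each fixed normal covering $(\alpha,\beta)$ the relation pair $(R_1,R_2)$ of \eqref{eq16}--\eqref{eq17} is precisely a Dowker relation between the point sets $X,A$ and the index sets $\alpha,\beta$, and the two simplicial pairs associated to it are exactly $(X(\alpha),A(\beta))$ and $(N(\alpha),N(\beta))$. First I would invoke Theorem 1 of \cite{Dow} in its absolute form to obtain, for $X(\alpha)$ against $N(\alpha)$ and for $A(\beta)$ against $N(\beta)$ separately, canonical cohomology isomorphisms. Since $A(\beta)$ is a subcomplex of $X(\alpha)$ and $N(\beta)$ is a subcomplex of $N(\alpha)$, these absolute isomorphisms are compatible with the restriction maps, so the long exact cohomology sequences of the two pairs fit into a ladder; applying the five lemma then upgrades them to a relative isomorphism $H^n(C^*(\alpha,\beta;G)) \simeq H^n(N(\alpha),N(\beta);G)$ for every $(\alpha,\beta)$.

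The main obstacle, and the step that does the real work, is naturality with respect to refinement. When $(\alpha,\beta)$ refines $(\alpha',\beta')$ there are canonical projection cochain maps on both the Vietorisian and the nerve sides, and to obtain a morphism of direct systems I must check that Dowker's canonical isomorphism commutes, at the level of cohomology, with these projections. This is not automatic, because Dowker's comparison map is defined through a choice of carrier --- for each covering element a point contained in it, and for each point a covering element containing it --- and distinct coverings force distinct choices; the verification amounts to showing that the chain maps arising from any two such choices are chain homotopic, and that these homotopies are themselves compatible under refinement, so that the induced maps on cohomology are independent of the choices and strictly commute. Granting this, the collection of relative isomorphisms constitutes a levelwise isomorphism of direct systems over $Cov_N(X,A)$. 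Finally, since $\varinjlim$ is exact and commutes with the formation of cohomology, passing to the limit carries the termwise isomorphism to an isomorphism of the limits, which combined with Corollary \ref{Cor.1} and the definition of $\check{H}^*_N(X,A;G)$ yields \eqref{eq18}.
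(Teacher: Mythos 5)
Your proposal follows essentially the same route as the paper: the paper likewise identifies both sides as direct limits over $Cov_N(X,A)$ (Corollary \ref{Cor.1} on the Alexander--Spanier side, the definition of $\check{H}^*_N$ on the \v{C}ech side), forms the Dowker relation pair \eqref{eq16}--\eqref{eq17} to produce the pairs $\left(X(\alpha),A(\beta)\right)$ and $\left(N(\alpha),N(\beta)\right)$, invokes Theorem 1 of \cite{Dow}, and passes to the limit. The steps you single out as the real work --- upgrading the absolute isomorphisms to the relative one via the five lemma, and the compatibility of Dowker's isomorphism with refinement projections up to chain homotopy --- are precisely what the paper compresses into its citation of \cite{Dow}, where these naturality statements are established.
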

By Lemma 2 (ii) \cite{Wat} the Alexandroff-\v{C}ech cohomology theory  based all normal coverings satisfies the relative homeomorphism axiom. Therefore, by Corollary \ref{Cor.2} we obtain:
\begin{corollary}\label{Cor.2.1}  If $f:\left(X,A\right) \to \left(Y,B\right)$ is a closed map of $P$-pairs such that f maps $X \setminus A$ homeomorphically onto $Y \setminus B$, then it induces an isomorphism:
	\begin{equation}\label{eq18.1}
	f_*:\bar{H}^*_N(Y,B;G) \os{\simeq}{\lra} \bar{H}^*_N(X,A;G),
	\end{equation}
\end{corollary}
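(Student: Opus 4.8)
The plan is to transport the relative homeomorphism axiom, already available for the Alexandroff-\v{C}ech normal cohomology, across the canonical isomorphism of Corollary~\ref{Cor.2}. First I would record that the induced homomorphism $f_*$ (the relative analogue of $f^*$ in \eqref{eq9}) sits as one edge of a square whose other three edges are the two vertical isomorphisms of Corollary~\ref{Cor.2} for the pairs $(Y,B)$ and $(X,A)$ together with the Alexandroff-\v{C}ech induced map $f^*:\check{H}^*_N(Y,B;G)\to\check{H}^*_N(X,A;G)$. If this square commutes and the Alexandroff-\v{C}ech edge is an isomorphism, then $f_*$ is an isomorphism, being expressible as a composite of three isomorphisms. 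The work thus reduces to one commutativity statement and one invocation of the known axiom.

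The substantive point is the commutativity of that square, that is, the naturality of the isomorphism of Corollary~\ref{Cor.2} with respect to closed maps of $P$-pairs. I would verify this by unwinding the two isomorphisms packaged into Corollary~\ref{Cor.2}. A closed map $f:(X,A)\to(Y,B)$ of $P$-pairs sends each normal covering $(\alpha,\beta)\in Cov_N(Y,B)$ to its preimage $(f^{-1}(\alpha),f^{-1}(\beta))$, which by the remark preceding \eqref{eq8} is again a normal covering, now of $(X,A)$; this assignment respects the refinement orderings and so induces a map of the direct systems $\{C^*(\alpha,\beta;G)\}$, compatible with the cochain maps $f^\#$ of \eqref{eq7}. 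Passing to cohomology and then to the direct limit of Corollary~\ref{Cor.1} shows that the upper isomorphisms $\bar{H}^*_N\simeq\varinjlim H^n(C^*(\alpha,\beta;G))$ intertwine $f_*$ with the limit of the covering-level (Vietorisian) maps.

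It then remains to match these Vietorisian maps with the corresponding nerve-level maps used to pass to $\check{H}^*_N$. Here I would appeal to the naturality of Dowker's comparison (Theorem~1 of \cite{Dow}) between the Vietorisian complex $X(\alpha)$ and the nerve $N(\alpha)$: for the relation pair $(R_1,R_2)$ of \eqref{eq16}--\eqref{eq17} and its pullback along $f$, the comparison chain maps are functorial, so the squares relating $\{X(\alpha),A(\beta)\}$ to $\{N(\alpha),N(\beta)\}$ commute with the $f$-induced maps. Combining this with the previous paragraph and passing to the limit gives commutativity of the whole square. Finally, by Lemma~2~(ii) of \cite{Wat} the Alexandroff-\v{C}ech edge $f^*:\check{H}^*_N(Y,B;G)\to\check{H}^*_N(X,A;G)$ is an isomorphism under the stated hypotheses on $f$, and the two-out-of-three argument of the first paragraph concludes the proof.

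I expect the main obstacle to be exactly this naturality of Dowker's Vietoris--nerve isomorphism for the pullback of the covering relations along $f$, since the cleanest form of Theorem~1 of \cite{Dow} is phrased for a single relation and one must check that the comparison is carried along by a morphism of relation pairs. The covering-pullback functoriality and the interchange of direct limits with cohomology are, by contrast, routine.
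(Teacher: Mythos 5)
Your proposal is correct and is essentially the paper's own argument: the paper deduces the corollary in one line by transporting the relative homeomorphism axiom (Lemma~2(ii) of \cite{Wat}) for $\check{H}^*_N$ across the isomorphism of Corollary~\ref{Cor.2}, which is exactly the two-out-of-three square you describe. The naturality of that isomorphism with respect to $f$ (covering pullback plus the functoriality of Dowker's Vietoris--nerve comparison), which you rightly identify as the only substantive point, is left implicit in the paper, so your write-up is if anything the more complete one.
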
 
As it is known, the classical Alexandroff-\v{C}ech cohomology theory and consequently, the Alexander-Spanier cohomology theory are characterized on the category of compact pairs $\mathcal{K}^2_C$ by the Eilenberg-Spanier axioms and the continuity axiom \cite{Sp}. In particular, let $F:\mathcal{K}_C \to \mathcal{A}b$ be a contravariant functor. 

$CA$: $F$ is said to be continuous if each inverse limit  ${\bf p}:X \to {\bf X}=\left\{X_\alpha, p_{\alpha \beta }, \alpha \in \mathcal{A}\right\} $ induces direct limit $F({\bf p}): F({\bf X})=\left\{F(X_\alpha), F(p_{\alpha \beta }), \alpha \in \mathcal{A}\right\} \to F(X) .$ 

If for each integer $n \in \mathbb{Z}$ a cohomology group $H^n(-,-;G)$ is continuous, then $H^*(-,-;G)$ is said to be a continuous cohomology theory.  In the paper \cite{Wat},  is shown that for general topological spaces the classical Alexandroff-\v{C}ech cohomology theory does not  satisfy the continuity axiom (see Example 3 in \cite{Wat}). Using the resolution,  Watanabe defined the continuity axiom  and showed that the cohomology theory $\check{H}^*_N(X,A;G)$ satisfies it on the category $\mathcal{K}^2_{Top}$. In particular, let $\mathcal{K}$ be any subcategory of the category of topological spaces $\mathcal{K}_{Top}$ and $F:\mathcal{K} \to \mathcal{A}b$ be a contravariant functor. 

$CA^*$: $F$ is said to be continuous if each resolution  ${\bf p}:X \to {\bf X}=\left\{X_\alpha, p_{\alpha \beta }, \alpha \in \mathcal{A}\right\} $ induces direct limit $F({\bf p}): F({\bf X})=\left\{F(X_\alpha), F(p_{\alpha \beta }), \alpha \in \mathcal{A}\right\} \to F(X).$ 

On the category $\mathcal{K}_C$ of compact Hausdorff spaces ${\bf p}:X \to {\bf X}=\left\{X_\alpha, p_{\alpha \beta }, \alpha \in \mathcal{A}\right\} $ is a resolution iff ${\bf p}$ is an inverse limit (see Theorem 6.1.1,  \cite{MS}). Therefore, the  continuity axiom of the Watanabe sense is consistent with the classical definition. Using a resolution and the classical method of extension of (co)homology theory from the category $\mathcal{K}^2_{Pol_C}$ of pairs of compact polyhedra to the category $\mathcal{K}^2_C$ of compact pairs \cite{Mdz2}, the extension of a functor from the category of $\mathcal{K}^2_{Pol}$ of all pairs of polyhedra to the category $\mathcal{K}^2_{Top}$ of all $P$-pairs are defined \cite{Wat}. It is shown that the Alexandroff-\v{C}ech cohomology theory $\check{H}^*_N(X,A;G)$  based on all normal coverings on the category $\mathcal{K}^2_{Top}$ is the unique (up to a natural equivalence) continuous extension of the singular cohomology on polyhedral pairs see (see Theorem 7 of \cite{Wat}). On the other hand, the singular cohomology theory on the category of $\mathcal{K}^2_{Pol}$ is just an additive cohomology theory \cite{Mil}, \cite{BM1}. Consequently, as a corollary it is obtained that the cohomology theory $\check{H}^*_N(X,A;G)$ on the category $\mathcal{K}^2_{Top}$ is characterized by the Eilenberg-Steenrod Axioms, the continuity axiom and the additivity axiom (see Corollary 8 ii) in \cite{Wat}. Note that except the continuity axiom,  all other axioms are sufficient to be fulfilled only the subcategory $\mathcal{K}^2_{Pol}$ of all pairs of polyhedra. 

By Corollary 2 and Corollary 8 in \cite{Wat}, we have:
\begin{corollary}\label{Cor.2'} The Alexander-Spanier normal cohomolgy theory $\bar{H}^*_N(-,-;G)$ defined on the category $\mathcal{K}^2_{Top}$ of all closed $P$-pairs  is continuous in the Watanabe sense.
\end{corollary}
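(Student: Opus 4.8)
The plan is to derive the continuity of $\bar{H}^*_N(-,-;G)$ from its identification with the Alexandroff-\v{C}ech normal theory, rather than by re-examining resolutions from scratch. Concretely, I would combine two facts: the isomorphism $\bar{H}^*_N(X,A;G)\simeq\check{H}^*_N(X,A;G)$ of Corollary \ref{Cor.2}, and Watanabe's result that $\check{H}^*_N$ satisfies the continuity axiom $CA^*$, i.e.\ that for every resolution $\mathbf{p}:(X,A)\to\{(X_\alpha,A_\alpha),p_{\alpha\beta}\}$ the induced map $\varinjlim\check{H}^*_N(X_\alpha,A_\alpha;G)\to\check{H}^*_N(X,A;G)$ is an isomorphism (Corollary 8 of \cite{Wat}). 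The continuity of $\bar{H}^*_N$ then follows by transporting this isomorphism across the natural equivalence.

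The first and only substantial step is to check that the isomorphism (\ref{eq18}) is \emph{natural} with respect to continuous maps of $P$-pairs. Recall that (\ref{eq18}) was built as the composite of the canonical direct-limit isomorphism $\bar{H}^*_N\simeq\varinjlim H^n(C^*(\alpha,\beta;G))$ of Corollary \ref{Cor.1}, coming from the Vietorisian complexes $(X(\alpha),A(\beta))$, with the Dowker comparison (Theorem 1 of \cite{Dow}) relating the cohomology of $(X(\alpha),A(\beta))$ to that of the nerve $(N(\alpha),N(\beta))$. Given $f:(X,A)\to(Y,B)$ in $\mathcal{K}^2_{Top}$, the assignment $(\alpha,\beta)\mapsto f^{-1}(\alpha,\beta)$ carries $Cov_N(Y,B)$ cofinally into $Cov_N(X,A)$ compatibly with the relations (\ref{eq16})--(\ref{eq17}); I would verify that the maps it induces on the Vietorisian and on the nerve cochain complexes are intertwined by the Dowker comparison, so that in the direct limit the square relating $f^*$ on $\bar{H}^*_N$ and $f^*$ on $\check{H}^*_N$ commutes. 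Naturality of Corollary \ref{Cor.1} is immediate from Theorem \ref{thm.1}, since the map $\lambda$ is canonical, so the verification reduces to naturality of the Dowker isomorphism alone.

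With naturality of (\ref{eq18}) in hand, the transfer is formal. For a resolution $\mathbf{p}:(X,A)\to\{(X_\alpha,A_\alpha)\}$, naturality over the index set $\mathcal{A}$ shows that the isomorphisms $\bar{H}^*_N(X_\alpha,A_\alpha;G)\simeq\check{H}^*_N(X_\alpha,A_\alpha;G)$ commute with the transition homomorphisms $p_{\alpha\beta}^{*}$ and hence pass to a direct-limit isomorphism $\varinjlim\bar{H}^*_N(X_\alpha,A_\alpha;G)\simeq\varinjlim\check{H}^*_N(X_\alpha,A_\alpha;G)$; together with the isomorphism (\ref{eq18}) at $(X,A)$ and the Watanabe isomorphism on the bottom, this produces a commutative square three of whose four arrows are isomorphisms. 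Consequently the fourth arrow, $\bar{H}^*_N(\mathbf{p}):\varinjlim\bar{H}^*_N(X_\alpha,A_\alpha;G)\to\bar{H}^*_N(X,A;G)$, is an isomorphism as well, which is precisely the assertion of the corollary. The main obstacle throughout is the naturality verification of the Dowker comparison with respect to arbitrary maps of $P$-pairs and its compatibility with the direct limits over $Cov_N$; once this is settled, the continuity of $\bar{H}^*_N$ is an immediate diagram chase.
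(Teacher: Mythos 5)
Your proposal follows exactly the paper's own route: the paper justifies this corollary in one line, ``By Corollary \ref{Cor.2} and Corollary 8 in \cite{Wat}'', i.e.\ it transports Watanabe's continuity of $\check{H}^*_N$ across the identification $\bar{H}^*_N \simeq \check{H}^*_N$, which is precisely your argument. The only difference is that you make explicit the naturality of the Dowker comparison and of the limit isomorphism of Corollary \ref{Cor.1} needed for the transfer, a verification the paper leaves implicit.
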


\section{Alexander-Spanier homology theory based on normal coverings}

In this section we use the method developed in the paper \cite{BM3} to construct an exact homology theory $\bar{H}^N_*(-,-;G)$, the so called {\it Alexander-Spanier normal homology}, such that for each closed $P$-pair there exists the Universal Coefficient Formula:
\begin{equation}\label{eq19}
0 \lra \Ext(\bar{H}^{n+1}_N(X,A);G) \os{}{\lra}  \bar{H}^N_n(X,A;G) \os{}{\lra} \Hom(\bar{H}^n_N(X,A);G) \lra 0,
\end{equation}
where $\bar{H}^{*}_N(-,-)$ is the Alexander-Spanier normal cohomology with the coefficient group  $\mathbb{Z}$. We will show that the constructed homology theory satisfies the Eilenberg-Steenrod axioms on the subcategory $\mathcal{K}^2_{Pol}$ of pairs of polyhedra and  by the uniqueness theorem on the category of paracompact  spaces given in \cite{San}, it is unique homology which is connected with the Alexandroff-\v{C}ech cohomology theory by the Universal Coefficient Formula.  On the other hand, we define continuity of an exact homology theory and show that the constructed homology theory is continuous in our sense.

Let $C^*$ be a cochain complex  and $0 \to G \os{\alpha}{\lra} G' \os{\beta}{\lra} G'' \to 0 $ be an injective resolution of a group $G$. Let $\beta_{\#}:\Hom(C^*;G') \to \Hom(C^*;G'')$ be the chain map induced by $\beta: G' \to G''$. Consider the cone  $C_*(\beta_{\#})=\left\{ C_n(\beta_{\#}), \partial \right\}=\left\{\Hom( C^*,\beta_{\#}), \partial \right\}$ of the chain map $\beta_{\#}$, i.e.
\begin{equation}\label{eq20}
C_n(\beta_{\#}) \simeq \Hom(C^{n};G') \oplus \Hom(C^{n+1};G''),
\end{equation}
\begin{equation}\label{eq21}
\partial(\varphi ',\varphi '')=(\varphi ' \circ \delta, \beta \circ \varphi ' -\varphi '' \circ \delta), ~~~ \forall (\varphi ',\varphi '') \in C_n(\beta_{\#}).
\end{equation}
If $f^\#:C^* \lra C'^*$ is a cochain map, then it induces the chain map $f_\#:C_* (\beta'_{\#}) \lra C_*(\beta_{\#})$. In particular, for each $n \in \mathbb{Z}$,  ${f}_n: C_n (\beta'_{\#}) \lra C_n (\beta_{\#})$ is defined by the formula
${f}_n(\varphi ', \varphi '')=(\varphi ' \circ f^n , \varphi '' \circ f^{n+1}).$  

\begin{lemma}\label{Lem.1} Each short exact sequence of cochain complexes
\begin{equation}\label{eq22}
0 \to C^* \os{f^\#}{\lra} C'^* \os{g^\#}{\lra} C''^* \to 0
\end{equation} 
induces a short exact sequence of chain complexes
\begin{equation}\label{eq23}
0 \to C_* (\beta''_{\#}) \os{g_\#}{\lra} C_* (\beta'_{\#}) \os{f_\#}{\lra} C_* (\beta_{\#}) \to 0.
\end{equation} 
\end{lemma}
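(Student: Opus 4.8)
The plan is to reduce everything to degreewise exactness. Since the induced maps $f_\#$ and $g_\#$ are already known to be chain maps, a sequence of chain complexes is exact precisely when it is exact in every degree; so it suffices to establish that for each integer $n$ the sequence
\[
0 \to C_n(\beta''_{\#}) \os{g_n}{\lra} C_n(\beta'_{\#}) \os{f_n}{\lra} C_n(\beta_{\#}) \to 0
\]
of abelian groups is short exact.

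First I would extract from the short exact sequence \eqref{eq22} of cochain complexes its degree-$k$ component, namely the short exact sequence of abelian groups $0 \to C^{k} \os{f^k}{\lra} C'^{k} \os{g^k}{\lra} C''^{k} \to 0$. The key observation is that $G'$ and $G''$ are \emph{injective}, since $0 \to G \os{\al}{\lra} G' \os{\bt}{\lra} G'' \to 0$ is an injective resolution of $G$. Hence the contravariant functors $\Hom(-;G')$ and $\Hom(-;G'')$ are exact, and applying them yields, for every $k$, the short exact sequences
\[
0 \to \Hom(C''^{k};G') \lra \Hom(C'^{k};G') \lra \Hom(C^{k};G') \to 0
\]
and
\[
0 \to \Hom(C''^{k};G'') \lra \Hom(C'^{k};G'') \lra \Hom(C^{k};G'') \to 0,
\]
in which every arrow is precomposition with the appropriate $g^{k}$ or $f^{k}$. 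Left exactness here holds for any coefficient group; it is the surjectivity on the right — the extendability of a homomorphism defined on a subgroup — that genuinely uses injectivity of $G'$ and $G''$.

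Finally I would invoke the direct sum description \eqref{eq20}, namely $C_n(\beta_{\#}) \simeq \Hom(C^{n};G') \oplus \Hom(C^{n+1};G'')$ together with its primed analogues, and the definition of $f_n$ and $g_n$, which act on the two summands independently by precomposition with $(f^n,f^{n+1})$ and $(g^n,g^{n+1})$ respectively. Consequently the degree-$n$ sequence of cones displayed above is exactly the direct sum of the $G'$-sequence in degree $n$ and the $G''$-sequence in degree $n+1$. A direct sum of short exact sequences is short exact, so the sequence is exact in degree $n$; as $n$ was arbitrary, the sequence \eqref{eq23} of chain complexes is exact.

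The essential point — more a pivot than an obstacle — is the recognition that injectivity of $G'$ and $G''$ is precisely what upgrades the always-left-exact functors $\Hom(-;G')$ and $\Hom(-;G'')$ to exact ones, thereby supplying the right-hand surjectivity. Everything else is the formal bookkeeping that the cone splits as a direct sum on which $f_\#$ and $g_\#$ act diagonally.
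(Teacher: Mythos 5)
Your proof is correct and follows essentially the same route as the paper: both arguments use the injectivity of $G'$ and $G''$ to make the contravariant functors $\Hom(-;G')$ and $\Hom(-;G'')$ exact, apply them to the short exact sequence \eqref{eq22} to obtain two exact $\Hom$-sequences, and then assemble these into the exact sequence of cones \eqref{eq23}. Your explicit degreewise direct-sum argument (using that $f_n$ and $g_n$ act diagonally on the two summands of the cone) simply spells out the step the paper compresses into ``consequently, the diagram \eqref{eq24} induces the short exact sequence \eqref{eq23}.''
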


\begin{proof} The groups $G'$ and $G''$ are injective abelian and so, the map $\beta : G' \to G''$ induces the following commutative diagram with the exact rows:
	\begin{equation}\label{eq24}
	\begin{tikzpicture}
	
	\node (A) {$0$};
	\node (B) [node distance=2.2cm, right of=A] {$\Hom(C''^*;G')$};
	c\node (C) [node distance=3.5cm, right of=B] {$ \Hom(C'^*;G')$};
	\node (D) [node distance=3.3cm, right of=C] {$\Hom(C^*;G')$};
	\node (E) [node distance=2cm, right of=D] {$0$};
	
	\node (A1) [node distance=1.5cm, below of=A] {$0$};
	\node (B1) [node distance=1.5cm, below of=B] {$\Hom(C''^*;G'')$};
	\node (C1) [node distance=1.5cm, below of=C] {$ \Hom(C'^*;G'')$};
	\node (D1) [node distance=1.5cm, below of=D] {$\Hom(C^*;G'')$};
	\node (E1) [node distance=1.5cm, below of=E] {$0$.};

	\draw[->] (A) to node [above]{}(B);
	\draw[->] (B) to node [above]{$g_\#$}(C);
	\draw[->] (C) to node [above]{$f_\#$}(D);
	\draw[->] (D) to node [above]{}(E);
	
	\draw[->] (A1) to node [above]{}(B1);
	\draw[->] (B1) to node [above]{$g_\#$}(C1);
	\draw[->] (C1) to node [above]{$f_\#$}(D1);
	\draw[->] (D1) to node [above]{}(E1);

	\draw[->] (B) to node [right]{$\beta''_\#$}(B1);
	\draw[->] (C) to node [right]{$\beta'_\#$}(C1);
	\draw[->] (D) to node [right]{$\beta_\#$}(D1);
	\end{tikzpicture}
	\end{equation}
Consequently, the diagram \eqref{eq24} induces the short exact sequence \eqref{eq23}. 
\end{proof}

Let $(X,A) \in \mathcal{K}^2_{Top}$ be a closed $P$-pair. Consider the short exact sequence of the Alexander-Spanier cochain complexes based on all normal coverings:
\begin{equation}\label{eq25}
0 \to \bar{C}^*_N(X,A;\mathbb{Z}) \os{j^\#}{\lra} \bar{C}^*_N(X;\mathbb{Z})  \os{i^\#}{\lra} \bar{C}^*_N(A;\mathbb{Z})  \to 0.
\end{equation} 
By Lemma \ref{Lem.1} the short exact sequence \eqref{eq25} induces a short exact sequence of chain complexes: 
\begin{equation}\label{eq26}
0 \to \bar{C}^N_* (A;G) \os{i_\#}{\lra} \bar{C}^N_* (X;G) \os{j_\#}{\lra} \bar{C}^N_* (X,A;G) \to 0,
\end{equation} 
where $\bar{C}^N_* (A;G)$, $\bar{C}^N_* (X;G)$ and $\bar{C}^N_* (X,A;G)$ are chain cones of the chain maps $\beta^A_\# : \Hom(\bar{C}^*_N(A;\mathbb{Z}) ;G') \to \Hom (\bar{C}^*_N(A;\mathbb{Z}) ;G'')$, $\beta^X_\# : \Hom(\bar{C}^*_N(X;\mathbb{Z}) ;G') \to \Hom (\bar{C}^*_N(X;\mathbb{Z}) ;G'')$ and $\beta^{(X,A)}_\# : \Hom(\bar{C}^*_N(X,A;\mathbb{Z}) ;G') \to \Hom (\bar{C}^*_N(X,A;\mathbb{Z}) ;G'')$, respectively. Let the homology groups of the chain complexes  $\bar{C}^N_* (A;G)$, $\bar{C}^N_* (X;G)$ and $\bar{C}^N_* (X,A;G)$ define by $\bar{H}^N_* (A;G)$, $\bar{H}^N_* (X;G)$ and $\bar{H}^N_* (X,A;G)$ and call {\it the Alexander-Spanier normal homology } of $A$, $X$ and $(X,A)$, respectively. By the sequence \eqref{eq26}, we obtain that it is an exact homology theory.
\begin{corollary}\label{Cor.3}{(Exactness Axiom)} For each $(X,A) \in \mathcal{K}^2_{Top}$ closed $P$-pair, there is a long exact homological sequence:
	 \begin{equation}\label{eq27}
	 \begin{tikzpicture}
	 
	 \node (A) {$\dots$};
	 \node (B) [node distance=2cm, right of=A] {$\bar{H}^N_n (A;G) $};
	 \node (C) [node distance=2.5cm, right of=B] {$\bar{H}^N_n (X;G) $};
	 \node (E) [node distance=5cm, right of=B] {$\bar{H}^N_n (X,A;G)$};
	 \node (F) [node distance=2.5cm, right of=E] {$\bar{H}^N_{n-1} (A;G)$};
	 \node (H) [node distance=2cm, right of=F] {$\dots$~~.};
	 
	 \draw[->] (A) to node [above]{$i_*$}(B);
	 \draw[->] (B) to node [above]{$i_*$}(C);
	 \draw[->] (C) to node [above]{$j_*$}(E);
	 \draw[->] (E) to node [above]{$E$}(F);
	 \draw[->] (F) to node [above]{$j_*$}(H);
	 
	 \end{tikzpicture}
	 \end{equation}
\end{corollary}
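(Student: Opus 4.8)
The plan is to read Corollary \ref{Cor.3} as a purely formal consequence of Lemma \ref{Lem.1} combined with the classical long exact homology sequence associated to a short exact sequence of chain complexes. First I would recall that the sequence \eqref{eq25} is genuinely a short exact sequence of cochain complexes: the map $i^\#:\bar{C}^*_N(X;\mathbb{Z})\to\bar{C}^*_N(A;\mathbb{Z})$ is the epimorphism induced by the inclusion $A\to X$ (established in Section 1, cf.\ \eqref{eq10}), and $\bar{C}^*_N(X,A;\mathbb{Z})$ was defined precisely as its kernel. Hence \eqref{eq25} is exact, with $j^\#$ the inclusion of the kernel.

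Next I would apply Lemma \ref{Lem.1} verbatim, taking $C^*=\bar{C}^*_N(X,A;\mathbb{Z})$, $C'^*=\bar{C}^*_N(X;\mathbb{Z})$, $C''^*=\bar{C}^*_N(A;\mathbb{Z})$, $f^\#=j^\#$, $g^\#=i^\#$, together with the fixed injective resolution $0\to G\to G'\to G''\to 0$. Matching the chain cones to the notation introduced after \eqref{eq26}, the cone $C_*(\beta''_\#)$ of the resolution over $\bar{C}^*_N(A;\mathbb{Z})$ is $\bar{C}^N_*(A;G)$, the cone over $\bar{C}^*_N(X;\mathbb{Z})$ is $\bar{C}^N_*(X;G)$, and the cone over $\bar{C}^*_N(X,A;\mathbb{Z})$ is $\bar{C}^N_*(X,A;G)$. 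Lemma \ref{Lem.1} then yields exactly the short exact sequence of chain complexes \eqref{eq26}, with the induced chain maps $i_\#$ and $j_\#$.

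Finally I would invoke the standard zig-zag (snake) lemma: any short exact sequence of chain complexes induces a long exact sequence in homology, in which the connecting homomorphism $E:\bar{H}^N_n(X,A;G)\to\bar{H}^N_{n-1}(A;G)$ is built in the usual way by lifting a cycle along the epimorphism $j_\#$, applying $\partial$, and pulling the result back along the monomorphism $i_\#$. Applied to \eqref{eq26}, this produces precisely the sequence \eqref{eq27}. The entire substantive content sits in Lemma \ref{Lem.1}; the only points requiring a line of verification are that $i_\#$ and $j_\#$ are chain maps (immediate from the explicit formula ${f}_n(\varphi',\varphi'')=(\varphi'\circ f^n,\varphi''\circ f^{n+1})$ given before the lemma) and that the connecting map $E$ is well defined and natural, both of which are routine. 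Thus I anticipate no genuine obstacle: the proof is the formal derivation of the homology long exact sequence from \eqref{eq26}.
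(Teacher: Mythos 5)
Your proposal is correct and coincides with the paper's own derivation: the paper likewise obtains Corollary~\ref{Cor.3} by applying Lemma~\ref{Lem.1} to the short exact sequence \eqref{eq25} (exact because $\bar{C}^*_N(X,A;\mathbb{Z})$ is by definition the kernel of the epimorphism $i^\#$), producing the short exact sequence of chain cones \eqref{eq26}, and then invoking the standard long exact homology sequence. Your identification of the cones $C_*(\beta''_\#)$, $C_*(\beta'_\#)$, $C_*(\beta_\#)$ with $\bar{C}^N_*(A;G)$, $\bar{C}^N_*(X;G)$, $\bar{C}^N_*(X,A;G)$ matches the paper exactly.
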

 
Let show that the constructed homology theory satisfies the homotopy axiom. 
\begin{lemma}\label{Lem.2} Each cochain homotopic maps $f^\#, g^\# :C^* \to C'^*$ induce  homotopic chain maps  $f_\#, g_\# :C_*(\beta '_\#) \to C_*(\beta_\#).$ 
\end{lemma}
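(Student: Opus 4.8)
The plan is to produce an explicit chain homotopy at the level of the cones. Recall that cochain-homotopic maps $f^\#, g^\# : C^* \to C'^*$ means there is a degree $-1$ map $D : C^* \to C'^*$, i.e. homomorphisms $D^n : C^n \to C'^{n-1}$, satisfying
\begin{equation}\label{eq.cohtpy}
g^n - f^n = \delta' \circ D^n + D^{n+1} \circ \delta.
\end{equation}
The induced chain maps $f_\#, g_\# : C_*(\beta'_\#) \to C_*(\beta_\#)$ are given, by the formula preceding Lemma \ref{Lem.1}, on an element $(\varphi', \varphi'') \in C_n(\beta'_\#) \simeq \Hom(C'^n; G') \oplus \Hom(C'^{n+1}; G'')$ by $f_n(\varphi', \varphi'') = (\varphi' \circ f^n, \varphi'' \circ f^{n+1})$, and similarly with $g$. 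First I would dualize \eqref{eq.cohtpy} by applying $\Hom(-; G')$ and $\Hom(-; G'')$ to obtain the two cochain homotopies that $D$ induces on the $\Hom$-complexes, namely $g^n_\# - f^n_\# = D^{n+1}_\# \circ \delta'_\# + \delta_\# \circ D^n_\#$ (as maps of the $\Hom$ chain complexes, where $D_\#$ denotes precomposition with $D$).

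Next I would guess the homotopy operator on the cones from the standard mapping-cone recipe. Define $S_n : C_n(\beta'_\#) \to C_{n+1}(\beta_\#)$, where $C_{n+1}(\beta_\#) \simeq \Hom(C^{n+1}; G') \oplus \Hom(C^{n+2}; G'')$, by precomposition with $D$ in each factor together with the correction term coming from the cone differential \eqref{eq21}. Concretely I expect the formula to take the shape
\begin{equation}\label{eq.htpy}
S_n(\varphi', \varphi'') = \bigl( \varphi' \circ D^{n+1},\ \pm\, \varphi'' \circ D^{n+2} \bigr),
\end{equation}
possibly with a sign or an additional $\beta$-term, chosen so that $S$ intertwines the cone boundary $\partial$ (as written in \eqref{eq21}) with the homotopy data. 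The strategy is then to compute $\partial \circ S_n + S_{n-1} \circ \partial$ directly from \eqref{eq21} and \eqref{eq.htpy}, and to check that each of the two coordinates collapses to $g_n - f_n$ after substituting the cochain-homotopy identity \eqref{eq.cohtpy} (dualized) into the appropriate slot.

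The bookkeeping is the main obstacle: the cone differential mixes the two summands via the term $\beta \circ \varphi'$ in \eqref{eq21}, so the cross-terms produced by $\partial S$ and $S \partial$ must be shown to cancel, and I would need to confirm that $D$ induces compatible homotopies on the $G'$- and $G''$-levels that are linked correctly by $\beta$ (this uses that $\beta$ is a fixed map of coefficient groups, so $\beta$ commutes with precomposition by any cochain-level operator such as $D$). The most delicate point is fixing the signs in \eqref{eq.htpy} so that the unwanted terms involving $\delta$, $\delta'$, $\beta$, and $D$ annihilate pairwise, leaving exactly $(\varphi' \circ (g^n - f^n), \varphi'' \circ (g^{n+1} - f^{n+1})) = g_n(\varphi', \varphi'') - f_n(\varphi', \varphi'')$. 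Once the sign convention is pinned down, the verification is a routine, if somewhat lengthy, diagram chase, so I would present the homotopy formula explicitly and then exhibit the two coordinatewise computations, relegating the sign verification to a direct substitution.
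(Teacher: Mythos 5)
Your proposal is correct and is essentially the paper's own proof: the paper uses the same coordinatewise homotopy $\bar{D}_n(\varphi',\varphi'')=\left(\varphi'\circ D^{n},\,-\varphi''\circ D^{n+1}\right)$ (so your undetermined sign resolves to $-$ and no extra $\beta$-term is needed), and it verifies $\bar{D}_n\circ\partial'+\partial\circ\bar{D}_{n+1}=f_n-g_n$ by exactly the direct computation you outline, with the $\beta$ cross-terms cancelling precisely because of that minus sign. The only discrepancy is the paper's loose superscripting of $D$ in its displayed definition; your indexing ($\varphi'\circ D^{n+1}$, $\varphi''\circ D^{n+2}$ on $C_n(\beta'_\#)$) is the type-correct one, and it is what the paper's subsequent computation actually uses.
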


\begin{proof} Let $D_\#=\left\{D_n\right\}$ be a cochain homotopy of cochain maps $f^\#, g^\# :C^* \to C'^*$. Therefore, for each $n \in \mathbb{Z}$ integer $D^n:C^n \to C'^{n-1}$ is a homomorphism such that
\begin{equation}\label{eq28}
\delta ' \circ D^n +D^{n+1} \circ \delta =f^n-g^n : C^n \to C'^n.
\end{equation}
Let $\bar{D}_\#=\left\{\bar{D}_n\right\}$ be the system of the homomorphisms $\bar{D}_n:C_n(\beta'_\#) \to C_{n+1}(\beta_\#)$ defined by the formula:
\begin{equation}\label{eq29}
\bar{D}_n(\varphi',\varphi '')=(\varphi ' \circ D^n , -\varphi '' \circ D^{n+1}),~~~~\forall ~ (\varphi', \varphi'') \in C_n(\beta'_\#).
\end{equation}
Let show that $\bar{D}_\#=\left\{\bar{D}_n\right\}$ is degree $1$ chain map such that
\begin{equation}\label{eq30}
\bar{D}_n \circ \partial' + \partial \circ \bar{D}_{n+1}=f_n-g_n:C_*(\beta'_\#)\to C_*(\beta_\#).
\end{equation}
Indeed, we have
$$\left(\bar{D}_{n} \circ \partial' \right)\left(\varphi ' , \varphi ''\right)=\bar{D}_{n}\left(\varphi ' \circ \delta' , \beta \circ \varphi '  -\varphi '' \circ \delta' \right)=$$
\begin{equation}\label{eq31}
\left(\varphi ' \circ \delta' \circ D^n ,-\left( \beta \circ \varphi '  -\varphi '' \circ \delta' \right) \circ D^{n+1}  \right)=\left(\varphi ' \circ \delta' \circ D^n ,-\beta \circ \varphi ' \circ D^{n+1}  + \varphi '' \circ \delta'  \circ D^{n+1}  \right).
\end{equation}
$$\left(\partial \circ \bar{D}_{n+1}\right)\left(\varphi ' , \varphi ''\right)=\partial\left(\bar{D}_{n+1}\left(\varphi ' , \varphi ''\right)\right)=\partial\left(\varphi ' \circ D^{n+1} , -\varphi '' \circ D^{n+2} \right)=$$
\begin{equation}\label{eq32}
\left(\varphi ' \circ D^{n+1} \circ \delta, \beta \circ \varphi ' \circ D^{n+1} -\left(-\varphi '' \circ D^{n+2} \circ \delta \right) \right)=\left(\varphi ' \circ D^{n+1} \circ \delta, \beta \circ \varphi ' \circ D^{n+1}+\varphi '' \circ D^{n+2} \circ \delta \right).
\end{equation}
Hence, we have
$$\left(\bar{D}_n \circ \partial' + \partial \circ \bar{D}_{n+1}\right) \left( \varphi ', \varphi ''\right)=  \left(\varphi ' \circ \delta' \circ D^n ,-\beta \circ \varphi ' \circ D^{n+1}  + \varphi '' \circ \delta'  \circ D^{n+1}  \right)+\left(\varphi ' \circ D^{n+1} \circ \delta, \beta \circ \varphi ' \circ D^{n+1}+\varphi '' \circ D^{n+2} \circ \delta \right)=$$
$$\left(\varphi ' \circ \delta' \circ D^n +\varphi ' \circ D^{n+1} \circ \delta,-\beta \circ \varphi ' \circ D^{n+1}  + \varphi '' \circ \delta'  \circ D^{n+1} +\beta \circ \varphi ' \circ D^{n+1}+\varphi '' \circ D^{n+2} \circ \delta   \right)=$$
$$\left(\varphi ' \circ \left(\delta' \circ D^n + D^{n+1} \circ \delta   \right), \varphi '' \circ \left( \delta'  \circ D^{n+1}+ D^{n+2} \circ \delta  \right)  \right)=\left(\varphi ' \circ \left(f^n-g^n \right), \varphi '' \circ \left( f^{n+1}-g^{n+1} \right)  \right)=$$
\begin{equation}\label{eq33}
\left(\varphi ' \circ f^n, \varphi '' \circ f^{n+1}   \right)-\left(\varphi ' \circ g^n, \varphi '' \circ g^{n+1}   \right)=f_n\left( \varphi ', \varphi ''\right)-g_n\left( \varphi ', \varphi ''\right).
\end{equation}
\end{proof}

\begin{corollary}\label{Cor.4}{(Homotopy Axiom)} If $f,g:(X,A) \to (X,B)$ are homotopic continuous maps of closed $P$-pairs, then
	\begin{equation}\label{eq34}
	f_*=g_*:\bar{H}^N_*(X,A) \to \bar{H}^N_*(Y,B).
	\end{equation}
\end{corollary}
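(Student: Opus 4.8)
The plan is to deduce the statement from Lemma~\ref{Lem.2} by proving the corresponding homotopy property one level down, at the Alexander--Spanier normal cochains, and then to invoke the elementary fact that chain homotopic maps induce the same homomorphism in homology. First I would set up the reduction. Since $f,g:(X,A)\to(Y,B)$ are homotopic maps of closed $P$-pairs, there is a continuous map $H:(X\times I,A\times I)\to(Y,B)$ with $I=[0,1]$, $H\circ i_0=f$ and $H\circ i_1=g$, where $i_0,i_1:X\to X\times I$ are the inclusions $x\mapsto(x,0)$ and $x\mapsto(x,1)$; here $(X\times I,A\times I)$ is again a closed $P$-pair, since $I$ is compact metrizable. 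Applying the contravariant cochain functor $(-)^\#$ of Section 1 gives $f^\#=i_0^\#\circ H^\#$ and $g^\#=i_1^\#\circ H^\#$ as cochain maps $\bar C^*_N(Y,B;\bZ)\to\bar C^*_N(X,A;\bZ)$. Hence it suffices to exhibit a cochain homotopy between $i_0^\#$ and $i_1^\#$ and to precompose it with $H^\#$.

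Next I would construct that cochain homotopy by the Alexander--Spanier prism operator. On unrestricted functions, for $\varphi\in C^n(X\times I;\bZ)$ I would set
\[(D\varphi)(x_0,\dots,x_{n-1})=\sum_{j=0}^{n-1}(-1)^j\varphi\big((x_0,0),\dots,(x_j,0),(x_j,1),\dots,(x_{n-1},1)\big),\]
so that $D:C^n(X\times I;\bZ)\to C^{n-1}(X;\bZ)$. A routine computation from the coboundary formula \eqref{eq6} yields the prism identity $\delta\circ D+D\circ\delta=i_1^\#-i_0^\#$, which I would not grind through. That $D$ respects the relative structure is a naturality check: since $H$ carries $A\times I$ into $B$, the operator $D$ commutes with restriction to the subspaces, hence descends to the kernels defining the relative complexes $\bar C^*_N(-,-;\bZ)$.

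The hard part will be verifying that $D$ passes to the quotients $\bar C^*_N$, that is, that it carries $N$-locally zero functions to $N$-locally zero functions; this is the only step where the word ``normal'' is essential. The naive prism is incompatible with a single covering, because the tuple occurring in $D\varphi$ contains both $(x_j,0)$ and $(x_j,1)$, and these need not lie in one element of a prescribed normal covering $\omega$ of $X\times I$. The remedy is to subdivide the interval before applying the prism: for each $x$ the segment $\{x\}\times I$ is compact, so the tube lemma provides a neighbourhood of $x$ and a finite subdivision of $I$ whose product boxes refine $\omega$, and composing the prisms over the successive subintervals makes the relevant tuples small. The genuine difficulty is that the required fineness varies with $x$ while $X$ need not be compact; I would overcome this by patching the local subdivisions using a partition of unity subordinate to a normal covering $\gamma$ of $X$, invoking precisely the defining property of normal coverings together with the fact, recorded in Section 1, that preimages and restrictions of normal coverings are again normal. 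The outcome is that whenever $\varphi$ vanishes on $\omega^{n+1}$, the subdivided prism of $\varphi$ vanishes on $\gamma^{\,n}$, so $D$ induces the desired homotopy on the quotient complexes.

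Finally I would assemble the pieces. Precomposing the cochain homotopy between the induced maps $\bar i_0^\#$ and $\bar i_1^\#$ with $H^\#$ produces a cochain homotopy between $f^\#$ and $g^\#:\bar C^*_N(Y,B;\bZ)\to\bar C^*_N(X,A;\bZ)$. By Lemma~\ref{Lem.2} the induced maps of cones $f_\#,g_\#:\bar C^N_*(X,A;G)\to\bar C^N_*(Y,B;G)$ are chain homotopic, and chain homotopic maps agree in homology. Therefore $f_*=g_*:\bar H^N_*(X,A;G)\to\bar H^N_*(Y,B;G)$, which is the assertion.
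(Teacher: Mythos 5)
Your skeleton is the same as the paper's: reduce to the two end inclusions of the cylinder, produce a cochain homotopy between $i_0^\#$ and $i_1^\#$ on $\bar{C}^*_N$, and finish with Lemma~\ref{Lem.2} (the paper states Corollary~\ref{Cor.4} as an immediate consequence of Lemma~\ref{Lem.2}, taking that cochain homotopy as standard input). So everything hinges on the step you yourself single out as ``the hard part'', and it is exactly there that the plan breaks down. For $\bar D$ to exist on the quotient complexes it must lift to a single additive operator $D:C^n(X\times I;\mathbb{Z})\to C^{n-1}(X;\mathbb{Z})$ whose defining formula is fixed once and for all, independently of the cochain $\varphi$ and of the normal covering $\omega$ witnessing that $\varphi$ is $N$-locally zero. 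If the subdivision of $I$ is fixed, say $t_k=k/m$, the prism identity holds but $D$ does not preserve $N$-locally zero functions: already for $X$ a one-point space $P$ and $n$ odd, let $\varphi\in C^n(I;\mathbb{Z})$ be $0$ on tuples of diameter $<1/(2m)$ and $1$ otherwise; then $\varphi$ is $N$-locally zero, while $D\varphi=\sum_{k=0}^{m-1}\sum_{j=0}^{n-1}(-1)^j\varphi(t_k,\dots,t_k,t_{k+1},\dots,t_{k+1})=m\neq 0$ in $\bar{C}^{n-1}_N(P;\mathbb{Z})\simeq\mathbb{Z}$, so $D$ does not descend. If instead the subdivision depends on the points, as your tube-lemma data force, the prism identity fails: in $\delta(D\varphi)(x_0,\dots,x_n)$ each face $(x_0,\dots,\hat x_i,\dots,x_n)$ carries its own subdivision, the telescoping collapses, and the error term is a difference of prisms over two different subdivisions; since it must vanish for \emph{arbitrary} $\varphi$, the subdivision assignment is forced to be constant along all small tuples, hence constant on each chain-connected piece of $X$, and for connected noncompact $X$ you are back to a fixed subdivision. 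Finally, a partition of unity cannot ``patch'' these choices: $D$ acts on functions with values in $\mathbb{Z}$ (or an arbitrary abelian group), so real-weighted combinations of prisms over different subdivisions are not even defined; normality can only enter through the combinatorics of coverings, not through averaging. This coherence obstruction is genuine: the classical arguments (Spanier's proof of the homotopy axiom for Alexander--Spanier cohomology, the stacked-covering argument in \v{C}ech theory) produce, for each covering, a homotopy that is not compatible with refinements, and therefore conclude only $i_0^*=i_1^*$ on cohomology --- which, in view of the Universal Coefficient Formula, is strictly weaker than what Lemma~\ref{Lem.2} needs: equality on cohomology determines $f_*-g_*$ only up to a homomorphism $\Hom\left(\bar{H}^n_N;G\right)\to\Ext\left(\bar{H}^{n+1}_N;G\right)$.

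A proof that stays inside the paper's toolkit and avoids any cochain homotopy runs as follows. By Corollary~\ref{Cor.2} and the homotopy invariance of the \v{C}ech normal cohomology, the projection $p:(X\times I,A\times I)\to(X,A)$ induces an isomorphism $p^*$ on $\bar{H}^*_N(-,-;\mathbb{Z})$; by naturality of the Universal Coefficient Formula (Corollary~\ref{Cor.5}, which is purely algebraic and independent of the homotopy axiom) and the five lemma, $p_*$ is an isomorphism on $\bar{H}^N_*(-,-;G)$. Since $p\circ i_0=p\circ i_1=1_{(X,A)}$, functoriality gives $i_{0*}=i_{1*}=(p_*)^{-1}$, and hence $f_*=H_*\circ i_{0*}=H_*\circ i_{1*}=g_*$ for any homotopy $H$ from $f$ to $g$. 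Your reduction to the cylinder and your observation that $(X\times I,A\times I)$ is again a closed $P$-pair are correct and are used verbatim in this repaired argument; what must be abandoned is the attempt to realize the homotopy at the level of the complexes $\bar{C}^*_N$.
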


By Theorem 1 of \cite{BM3}, for the homology theory $\bar{H}^N_*(-,-;G)$ there is the Universal Coefficient Formula.

\begin{corollary}\label{Cor.5}(Universal Coefficient Formula) For each $(X,A) \in \mathcal{K}^2_{Top}$ closed $P$-pair and an abelian group $G$, there exists a short exact sequence
\begin{equation}\label{eq38}
0 \lra \Ext(\bar{H}^{n+1}_N(X,A);G) \os{}{\lra}  \bar{H}^N_n(X,A;G) \os{}{\lra} \Hom(\bar{H}^n_N(X,A);G) \lra 0,
\end{equation}
where $\bar{H}^{n+1}_N(-,-)$ is the Alexander-Spanier normal cohomology with the coefficient group $\mathbb{Z}$.
\end{corollary}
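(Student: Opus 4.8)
The statement is an instance of Theorem 1 of \cite{BM3} applied to the cochain complex $C^*=\bar{C}^*_N(X,A;\mathbb{Z})$, and my plan is to indicate the mechanism behind it rather than merely invoke the citation. The idea is to read the sequence \eqref{eq38} off from the long exact homology sequence of the chain cone $\bar{C}^N_*(X,A;G)=C_*(\beta^{(X,A)}_\#)$ of \eqref{eq20}--\eqref{eq21}, combined with the injectivity of $G'$ and $G''$.

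\emph{Step 1: the long exact sequence of the cone.} I would write $P_*=\Hom(\bar{C}^*_N(X,A;\mathbb{Z});G')$ and $Q_*=\Hom(\bar{C}^*_N(X,A;\mathbb{Z});G'')$, each regarded as a chain complex under precomposition with $\delta$. Formula \eqref{eq21} exhibits $C_*(\beta^{(X,A)}_\#)$ as the mapping cone of $\beta^{(X,A)}_\#:P_*\to Q_*$. I would then use the short exact sequence of chain complexes $0 \to Q_{*+1} \os{}{\lra} C_*(\beta^{(X,A)}_\#) \os{}{\lra} P_* \to 0$, with inclusion $\varphi''\mapsto(0,\varphi'')$ and projection $(\varphi',\varphi'')\mapsto\varphi'$; lifting a cycle $(\varphi',0)$ and applying \eqref{eq21} shows that the connecting homomorphism of the associated long exact sequence is precisely $(\beta^{(X,A)}_\#)_*$, the map induced by $\beta$.

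\emph{Step 2: collapsing the coefficient complexes.} Because $G'$ and $G''$ are injective, $\Hom(-;G')$ and $\Hom(-;G'')$ are exact, so $H_n(P_*)\cong\Hom(\bar{H}^n_N(X,A);G')$ and $H_n(Q_*)\cong\Hom(\bar{H}^n_N(X,A);G'')$, and under these identifications the connecting map becomes $\Hom(\bar{H}^n_N(X,A);\beta)$. Applying $\Hom(\bar{H}^n_N(X,A);-)$ to the injective resolution $0 \to G \os{\alpha}{\lra} G' \os{\beta}{\lra} G'' \to 0$ and using $\Ext(-;G')=0$ identifies the kernel of this map with $\Hom(\bar{H}^n_N(X,A);G)$ and its cokernel with $\Ext(\bar{H}^n_N(X,A);G)$.

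\emph{Step 3: splicing.} The long exact sequence then breaks into the short exact sequences $0 \to \Coker(\beta_*\colon H_{n+1}(P_*)\to H_{n+1}(Q_*)) \to \bar{H}^N_n(X,A;G) \to \Ker(\beta_*\colon H_n(P_*)\to H_n(Q_*)) \to 0$, which by Step 2 are exactly \eqref{eq38}. Functoriality in $(X,A)$ is inherited from the naturality of the cone and of the connecting homomorphism, and independence of the chosen injective resolution follows from the usual comparison of injective resolutions. The only point demanding care is the degree bookkeeping in Step 1 — matching the cokernel term sitting in cohomological degree $n+1$ with the $\Ext$ summand — after which the argument is the formal splitting of a long exact sequence and presents no further obstacle.
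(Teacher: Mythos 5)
Your proposal is correct and takes essentially the same approach as the paper: the paper's entire proof of Corollary~\ref{Cor.5} is the citation of Theorem~1 of \cite{BM3}, and your argument is exactly the mapping-cone mechanism underlying that theorem, instantiated at $C^*=\bar{C}^*_N(X,A;\mathbb{Z})$ — the short exact sequence $0 \to Q_{*+1} \to C_*(\beta^{(X,A)}_\#) \to P_* \to 0$, the identification of the connecting homomorphism with $\beta_*$, the collapse $H_n(P_*)\simeq\Hom(\bar{H}^n_N(X,A);G')$ via injectivity, and the splicing into $0\to\Coker(\beta_*)_{n+1}\to\bar{H}^N_n(X,A;G)\to\Ker(\beta_*)_n\to 0$. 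The degree bookkeeping ($\Coker$ in degree $n+1$ giving the $\Ext$ term, $\Ker$ in degree $n$ giving the $\Hom$ term) is handled correctly, so the reconstruction is sound as written.
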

Using the Universal Coefficient Formula we simply obtain the relative homeomorphism and the dimension axioms.
	\begin{theorem}\label{thm.2} (Relative Homeomorphism Axiom) The Alexander-Spannier normal homology theory $H^N_*(-,-;G)$ is invariant under relative homeomorphism.
	\end{theorem}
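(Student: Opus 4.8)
The plan is to reduce the statement to the relative homeomorphism property already established for the Alexander-Spanier normal cohomology in Corollary \ref{Cor.2.1}, and to transport it to homology through the Universal Coefficient Formula of Corollary \ref{Cor.5} by means of the Five Lemma. In this way no new chain-level argument about the cone complexes is required: once the naturality of the Universal Coefficient sequence is in hand, the whole matter is purely algebraic.

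First I would fix a closed map $f:(X,A) \to (Y,B)$ of $P$-pairs carrying $X \setminus A$ homeomorphically onto $Y \setminus B$. Applying Corollary \ref{Cor.2.1} with the coefficient group $\bZ$ gives, for every integer $n$, an isomorphism $f^*:\bar{H}^n_N(Y,B;\bZ) \os{\simeq}{\lra} \bar{H}^n_N(X,A;\bZ)$. Since $\Hom(-;G)$ and $\Ext(-;G)$ are additive contravariant functors, they carry these isomorphisms to isomorphisms on the two flanking terms of the Universal Coefficient sequence; that is, $f$ induces isomorphisms $\Ext(\bar{H}^{n+1}_N(X,A);G) \os{\simeq}{\lra} \Ext(\bar{H}^{n+1}_N(Y,B);G)$ and $\Hom(\bar{H}^n_N(X,A);G) \os{\simeq}{\lra} \Hom(\bar{H}^n_N(Y,B);G)$.

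The next step is to place these maps into the commutative ladder of short exact sequences determined by $f$: the sequence of Corollary \ref{Cor.5} for $(X,A)$ on top, the same sequence for $(Y,B)$ on the bottom, the two outer vertical arrows the isomorphisms just produced, and the middle vertical arrow the induced homomorphism $f_*:\bar{H}^N_n(X,A;G) \to \bar{H}^N_n(Y,B;G)$. The Five Lemma then forces $f_*$ to be an isomorphism for every $n$, which is exactly the assertion of the theorem.

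The main obstacle is to be certain that this ladder actually commutes, i.e. that the Universal Coefficient sequence is natural with respect to maps of $P$-pairs. This is where I would appeal to the construction of Section 2: the map $f$ induces a cochain map $f^\#$ on the normal Alexander-Spanier complexes, hence, by the functoriality of the cone $C_*(\beta_\#)$ recorded there (the formula $f_n(\varphi',\varphi'')=(\varphi' \circ f^n,\varphi'' \circ f^{n+1})$), a chain map on the homology cones; and the Universal Coefficient sequence of Theorem 1 of \cite{BM3} is natural in precisely this chain map. Verifying that the two squares of the ladder commute is therefore a matter of tracing the naturality already built into \cite{BM3} rather than a fresh computation. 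The dimension axiom, incidentally, falls out of the same Universal Coefficient Formula combined with the known value of the normal cohomology on a one-point space.
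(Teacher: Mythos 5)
Your proposal is correct and takes essentially the same approach as the paper: the paper's own proof also forms the commutative ladder of Universal Coefficient sequences for $(X,A)$ and $(Y,B)$, uses Corollary \ref{Cor.2.1} to see that the outer vertical maps $\Ext(f^*;G)$ and $\Hom(f^*;G)$ are isomorphisms, and concludes that $f_*$ is an isomorphism (the Five Lemma step the paper leaves implicit). Your extra care about naturality of the sequence, traced back to the functoriality of the cone construction in \cite{BM3}, merely makes explicit what the paper takes for granted.
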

	\begin{proof} Consider the diagram induced by $f:(X,A) \to (Y,B)$:
		\begin{equation}\label{eq95}
		\begin{tikzpicture}
		
		\node (A) {$0$};
		\node (B) [node distance=2.5cm, right of=A] {$\Ext \left(\bar{H}^{n+1}_N \left(Y,B;\mathbb{Z}\right);G\right) $};
		\node (C) [node distance=3.5cm, right of=B] {${H}_n\left(Y,B;G\right)$};
		\node (D) [node distance=3.5cm, right of=C] {$\Hom \left(\bar{H}^n_N\left(Y,B;\mathbb{Z}\right);G \right)$};
		\node (E) [node distance=2.7cm, right of=D] {$0~,$};

		\draw[->] (A) to node [above]{}(B);
		\draw[->] (B) to node [above]{}(C);
		\draw[->] (C) to node [above]{}(D);
		\draw[->] (D) to node [above]{}(E);
		
		\node (A1) [node distance=1.5cm, above of=A] {0};
		\node (B1) [node distance=1.5cm, above of=B] {$\Ext \left(\bar{H}^{n+1}_N \left(X,A;\mathbb{Z}\right);G\right) $};
		\node (C1) [node distance=1.5cm, above of=C] {${H}_n\left(X,A;G\right)$};
		\node (D1) [node distance=1.5cm, above of=D] {$\Hom \left(\bar{H}^n_N\left(X,A;\mathbb{Z} \right);G\right)$};
		\node (E1) [node distance=1.5cm, above of=E] {0};

		\draw[->] (A1) to node [above]{}(B1);
		\draw[->] (B1) to node [above]{}(C1);
		\draw[->] (C1) to node [above]{}(D1);
		\draw[->] (D1) to node [above]{}(E1);

		\draw[<-] (B) to node [right]{$\Ext\left(f^*;G\right)$}(B1);
		\draw[<-] (C) to node [right]{$f_*$}(C1);
		\draw[<-] (D) to node [right]{$\Hom\left(f^*;G\right)$}(D1);
		\end{tikzpicture}
		\end{equation}
		By Corollary \ref{Cor.2.1} the homomorphisms $f^*:\bar{H}^{p}_N \left(Y,B;\mathbb{Z}\right) \to \bar{H}^{p}_N \left(Y,B;\mathbb{Z}\right)$ is an isomorphism for all $p \ge 0$ and therefore, $f_*:\bar{H}_{n}^N \left(X,A;\mathbb{Z}\right) \to \bar{H}_{n}^N \left(Y,B;\mathbb{Z}\right)$ is an isomorphism.
	\end{proof}
\begin{theorem}\label{thm.3} {(Dimension Axiom)} For each one point topological space $P$, there is an isomorophism 
	\begin{equation}\label{eq39}
	\bar{H}^N_n(P;G)=\begin{cases}
	G, ~~if~~ n=0\\
	0, ~~if~~ n\ne 0.
	\end{cases}
	\end{equation}
\end{theorem}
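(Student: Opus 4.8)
The plan is to deduce the homological dimension axiom from the already-established Universal Coefficient Formula (Corollary \ref{Cor.5}) together with a direct computation of the cohomology $\bar{H}^n_N(P;\mathbb{Z})$. First I would fix the one-point space $P=\{p\}$ and regard it as the closed $P$-pair $(P,\emptyset)$, which lies in $\mathcal{K}^2_{Top}$ since $P$ is compact Hausdorff hence paracompact. The key preliminary step is to identify the cochain complex $\bar{C}^*_N(P;\mathbb{Z})$ explicitly. Because $P^{n+1}$ is a single point, every function $\varphi:P^{n+1}\to\mathbb{Z}$ is determined by one integer, so $C^n(P;\mathbb{Z})\cong\mathbb{Z}$. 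The only normal covering of $P$ is $\{P\}$ itself (with the partition of unity $\varphi_P\equiv 1$), for which $\alpha^{n+1}=P^{n+1}$; hence an $N$-locally zero cochain must vanish on all of $P^{n+1}$ and therefore be zero, giving $C^n_N(P;\mathbb{Z})=0$ and $\bar{C}^n_N(P;\mathbb{Z})\cong\mathbb{Z}$ for every $n\ge 0$.

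Next I would compute the coboundary. Evaluating formula \eqref{eq6} at the single tuple $(p,\dots,p)$ shows that $\delta:\bar{C}^n_N(P;\mathbb{Z})\to\bar{C}^{n+1}_N(P;\mathbb{Z})$ is multiplication by $\sum_{i=0}^{n+1}(-1)^i$, which equals $0$ for $n$ even and $1$ for $n$ odd. Thus $\bar{C}^*_N(P;\mathbb{Z})$ is the complex $\mathbb{Z}\xrightarrow{0}\mathbb{Z}\xrightarrow{\mathrm{id}}\mathbb{Z}\xrightarrow{0}\mathbb{Z}\xrightarrow{\mathrm{id}}\cdots$, whose cohomology is $\bar{H}^0_N(P;\mathbb{Z})\cong\mathbb{Z}$ and $\bar{H}^n_N(P;\mathbb{Z})=0$ for all $n\ne 0$.

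Finally I would feed this into the Universal Coefficient Formula applied to $P$ (the absolute case, $A=\emptyset$). For $n=0$ the sequence \eqref{eq38} reads $0\to\Ext(\bar{H}^1_N(P);G)\to\bar{H}^N_0(P;G)\to\Hom(\bar{H}^0_N(P);G)\to 0$; since $\bar{H}^1_N(P)=0$ the $\Ext$ term vanishes and $\Hom(\mathbb{Z};G)\cong G$, giving $\bar{H}^N_0(P;G)\cong G$. For every $n\ge 1$ or $n\le -2$ both $\bar{H}^{n+1}_N(P)$ and $\bar{H}^n_N(P)$ vanish, so $\bar{H}^N_n(P;G)=0$; and for the single remaining index $n=-1$ the outer terms are $\Ext(\bar{H}^0_N(P);G)=\Ext(\mathbb{Z};G)=0$ and $\Hom(\bar{H}^{-1}_N(P);G)=\Hom(0;G)=0$, again forcing the middle group to be $0$. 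This yields exactly the asserted values.

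I do not expect a serious obstacle here; the only points requiring care are the degree bookkeeping in the Universal Coefficient Formula (the shift between $n$ and $n+1$, and the boundary index $n=-1$ where the $\Ext$ argument is the nonzero group $\mathbb{Z}$) and the verification that a point admits no nontrivial normal covering, so that $C^n_N(P;\mathbb{Z})=0$ and the quotient $\bar{C}^n_N$ is all of $\mathbb{Z}$ rather than collapsing. If one prefers a self-contained argument avoiding Corollary \ref{Cor.5}, the same conclusion follows by computing the homology of the mapping cone complex $C_*(\beta_\#)$ of \eqref{eq20}--\eqref{eq21} directly from the explicit complex $\bar{C}^*_N(P;\mathbb{Z})$ found above, using an injective resolution $0\to G\to G'\to G''\to 0$; this is a short diagram chase in which the alternating $0$/$\mathrm{id}$ coboundaries make all but the degree-zero homology cancel, producing $\ker\beta\cong G$ in degree $0$.
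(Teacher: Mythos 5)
Your proposal is correct and follows essentially the same route as the paper's own proof: both deduce the homological dimension axiom by feeding the dimension axiom for the cohomology $\bar{H}^*_N(P;\mathbb{Z})$ into the Universal Coefficient Formula of Corollary \ref{Cor.5}, observing that the $\Ext$ term always vanishes. The only difference is one of detail, not of method: you explicitly compute the cochain complex $\bar{C}^*_N(P;\mathbb{Z})$ (alternating $0/\mathrm{id}$ coboundaries) and single out the boundary index $n=-1$, where vanishing rests on $\Ext(\mathbb{Z};G)=0$ rather than on vanishing cohomology, whereas the paper simply asserts the cohomological dimension axiom and treats all $n\ne 0$ uniformly.
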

\begin{proof} Consider the corresponding Universal Coefficient Formula:
	\begin{equation}\label{eq40}
	0 \lra \Ext(\bar{H}^{n+1}_N(P;\mathbb{Z});G) \os{}{\lra}  \bar{H}^N_n(P;G) \os{}{\lra} \Hom(\bar{H}^n_N(P;\mathbb{Z});G) \lra 0.
	\end{equation}
The Alexander-Spanier normal cohomology theory $\bar{H}^*_N(-,-)$ satisfies the dimension axiom.  Therefore, we have
    \begin{equation}\label{eq41}
	\bar{H}^n_N(P;\mathbb{Z})=\begin{cases}
	\mathbb{Z}, ~~if~~ n=0\\
	0, ~~if~~ n\ne 0.
	\end{cases}
	\end{equation}
Thus, 	$\Ext(\bar{H}^{n+1}_N(P;\mathbb{Z});G)\simeq 0$ and so, we have the isomorphism
\begin{equation}\label{eq42}
  \bar{H}^N_n(P;G) \simeq  \Hom(\bar{H}^n_N(P;\mathbb{Z});G)=\begin{cases}
  \Hom(\mathbb{Z};G)\simeq G, ~~if~~ n=0\\
  \Hom(0;G)\simeq 0, ~~if~~ n\ne 0.
  \end{cases}.
\end{equation}
\end{proof}

\section{Some properties of Alexander-Spanier homology theory on the category $\mathcal{K}^2_{Top}$}

The continuity axiom for an exact homology theory $H_*(-,-;G)$ on the category of compact spaces is defined in the paper \cite{Mdz1} (see Definition 1). We can formulate it in the following way:

$C$: An exact homology theory $H_*(-;G):\mathcal{K}_C \to \mathcal{A}b$ is said to be continuous if each inverse limit  ${\bf p}:X \to {\bf X}=\left\{X_\alpha, p_{\alpha \beta }, \alpha \in \mathcal{A}\right\} $ induces a long exact sequence:
\begin{equation}\label{eq43}
\begin{tikzpicture}\small

\node (A) {$\dots$};
\node (B) [node distance=2cm, right of=A] {$ {\varprojlim} ^ {(3)}   {H} _{n+2}(X_\alpha;G) $};
\node (C) [node distance=3cm, right of=B] {${\varprojlim} ^ {(1)}  H_{n+1}(X_\alpha;G)$};
\node (D) [node distance=2.5cm, right of=C] {$ H_n(X;G)$};
\node (E) [node distance=2.2cm, right of=D] {${\varprojlim}  {H} _{n}(X_\alpha;G) $};
\node (F) [node distance=2.8cm, right of=E] {${\varprojlim} ^ {(2)}  {H}_ {n+1}(X_\alpha;G) $};
\node (H) [node distance=2.2cm, right of=F] {$\dots$~.};

\draw[->] (A) to node [above]{}(B);
\draw[->] (B) to node [above]{}(C);
\draw[->] (C) to node [above]{}(D);
\draw[->] (D) to node [above]{}(E);
\draw[->] (E) to node [above]{}(F);
\draw[->] (F) to node [above]{}(H);

\end{tikzpicture}
\end{equation}

Using the continuity axiom, an exact homology theory $H_*(-,-;G)$ is characterized on the category $\mathcal{K}^2_C$ (see Corollaries 3, 4, 5 in \cite{Mdz1}).Here we formulate three different properties of an exact homology theory on the category $\mathcal{K}^2_{Top},$ which are the generalization of the axioms proposed by N. Berikashvili \cite{Ber},  L. Mdzinarishvili and Kh. Inasaridze \cite{InKh}, L. Mdzinarishvili \cite{Mdz2} and Kh. Inasaridze \cite{In1} on the category $\mathcal{K}^2_C$. Note that for paracompact spaces S. Saneblidze \cite{San} generalized the result obtained by N. Berikashvili \cite{Ber} for compact spaces.   

Let $H_*(-,-;G)$ be a homological functor defined on the category $\mathcal{K}^2_{Top}$ of closed $P$-pairs.   

$CEH$ ({\it Continuity for an Exact Homology}): For each resolution  ${\bf p}:(X,A) \to {\bf (X,A)}=\left\{(X_\alpha, A_\beta), p_{\alpha \beta }, \alpha \in \mathcal{A}\right\} $ of closed $P$-pair $(X,A) \in \mathcal{K}^2_{Top}$ and an abelian
group $G$, there exists a functorial long exact sequence:
\begin{equation}\label{eq44}
\begin{tikzpicture}\small

\node (A) {$\dots$};
\node (B) [node distance=2.2cm, right of=A] {$ {\varprojlim} ^ {(3)}   {H} _{n+2}(X_\alpha, A_\beta;G) $};
\node (C) [node distance=3.2cm, right of=B] {${\varprojlim} ^ {(1)}  H_{n+1}(X_\alpha,A_\beta;G)$};
\node (D) [node distance=2.6cm, right of=C] {$ H_n(X,A;G)$};
\node (E) [node distance=2.3cm, right of=D] {${\varprojlim}  {H} _{n}(X_\alpha,A_\beta;G) $};
\node (F) [node distance=3cm, right of=E] {${\varprojlim} ^ {(2)}  {H}_ {n+1}(X_\alpha,A_\beta;G) $};
\node (H) [node distance=2.2cm, right of=F] {$\dots$~.};

\draw[->] (A) to node [above]{}(B);
\draw[->] (B) to node [above]{}(C);
\draw[->] (C) to node [above]{}(D);
\draw[->] (D) to node [above]{}(E);
\draw[->] (E) to node [above]{}(F);
\draw[->] (F) to node [above]{}(H);

\end{tikzpicture}
\end{equation}

$CIG$ ({\it Continuity for an Injective Group}): For each resolution  ${\bf p}:(X,A) \to {\bf (X,A)}=\left\{(X_\alpha, A_\beta), p_{\alpha \beta }, \alpha \in \mathcal{A}\right\} $ of closed $P$-pair $(X,A) \in \mathcal{K}^2_{Top}$ and  an injective abelian group $G$, there exists an isomorphism
\begin{equation}\label{eq45}
H_n(X,A;G) \approx \varprojlim H_n(X_\alpha,A_\beta ;G).
\end{equation}

$UCF$ ({\it Universal Coefficient Formula}): For each $(X,A) \in \mathcal{K}^2_{Top}$ closed $P$-pair and an abelian group $G$, there exists a functorial exact sequence:
\begin{equation}\label{eq46}
0 \lra \Ext\left(\bar{H}^{n+1}_N(X,A);G\right) \os{}{\lra}  {H}_n\left(X,A;G\right) \os{}{\lra} \Hom(\bar{H}^n_N\left(X,A);G\right) \lra 0,
\end{equation}
where $	\bar{H}^{n+1}_N(-,-;G)$ is the Alexander-Spanier normal cohomology.

Note that if the property $UCF$  is satisfied only for some class of pairs $M^2 \subset  \mathcal{K}^2_{Top}$, then we will use the notation $UCF_{M^2}$.

\begin{definition}[{see \cite{BM3} }]\label{Def.1}
	A direct system $\mathbf{C}^*=\{C^*_\alpha \}$ of  cochain complexes $C^*_\alpha$ is said to be associated with a cochain complex $C^*,$ if there is a homomorphism $\mathbf{C}^* \to C^*$ such that for each $n \in \mathbb{Z}$ the induced homomorphism
	\begin{equation}\label{eq47}
	\varinjlim H^*(C^*_\alpha) \to H^*(C^*)
	\end{equation}
	is an isomorphism.
\end{definition}

By Lemma 5.5 \cite{BM3}, it is known that if a direct system $\mathbf{C}^*=\{C^*_\alpha \}$ of cochain complexes $C^*_\alpha$ is associated with a cochain complex $C^*$, then there is an infinite exact sequence: 
	$$\cdots \lra \llm^{(2k+1)} \bar{H}_{n+k+1}(C^*_\alpha;G) \lra \cdots \lra \llm^{(3)} \bar{H}_{n+2}(C^*_\alpha;G) \lra \llm^{(1)} \bar{H}_{n+1}(C^*_\alpha;G) \lra$$
	\begin{equation} \label{eq48}
	\lra \bar{H}_{n}(C^*;G) \os{\pi_*}{\lra} \llm \bar{H}_{n}(C^*_\alpha;G) \lra  \llm^{(2)} \bar{H}_{n+1}(C^*_\alpha;G) \lra \cdots \lra \llm^{(2k)} \bar{H}_{n+k}(C^*_\alpha;G) \lra \cdots\, .
	\end{equation}
	where $\bar{H}_{*}(C^*;G)=H_*\left(\Hom\left(C^*;\beta_\#\right)\right)=H_*\left(C_*\left(\beta_\#\right)\right)$ and $\bar{H}_{*}(C^*_\alpha;G)=H_*\left(\Hom(C^*_\alpha;\beta_\#)\right)=H_*\left(C^\alpha_*(\beta_\#)\right).$

\begin{theorem}\label{thm.4} The Alexander-Spanier normal homology theory $\bar{H}^N_*(-;G)$ satisfies CEH property. 
\end{theorem}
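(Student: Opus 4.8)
The plan is to obtain the $CEH$ exact sequence \eqref{eq44} as a direct instance of the infinite exact sequence \eqref{eq48} furnished by Lemma 5.5 of \cite{BM3}, applied to the direct system of Alexander-Spanier normal cochain complexes associated to the given resolution. The only substantive thing to establish is that this direct system is \emph{associated} with $\bar{C}^*_N(X,A;\mathbb{Z})$ in the sense of Definition \ref{Def.1}, and this I would deduce from the continuity of the Alexander-Spanier normal cohomology recorded in Corollary \ref{Cor.2'}.

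First I would fix a resolution $\mathbf{p}:(X,A)\to\mathbf{(X,A)}=\{(X_\alpha,A_\beta),p_{\alpha\beta}\}$ of the closed $P$-pair $(X,A)$. Each projection $p_\alpha:(X,A)\to(X_\alpha,A_\beta)$ induces, via \eqref{eq8}--\eqref{eq10}, a cochain map $p_\alpha^\#:\bar{C}^*_N(X_\alpha,A_\beta;\mathbb{Z})\to\bar{C}^*_N(X,A;\mathbb{Z})$, and the bonding maps $p_{\alpha\beta}$ make $\mathbf{C}^*=\{\bar{C}^*_N(X_\alpha,A_\beta;\mathbb{Z})\}$ into a direct system of cochain complexes equipped with a homomorphism $\mathbf{C}^*\to C^*:=\bar{C}^*_N(X,A;\mathbb{Z})$. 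To check that $\mathbf{C}^*$ is associated with $C^*$ I must verify that the induced map $\varinjlim \bar{H}^*_N(X_\alpha,A_\beta;\mathbb{Z})\to\bar{H}^*_N(X,A;\mathbb{Z})$ is an isomorphism in every degree. But this is exactly the assertion that the resolution $\mathbf{p}$ carries Alexander-Spanier normal cohomology to its direct limit, i.e. the Watanabe continuity $CA^*$, which holds by Corollary \ref{Cor.2'} (using the identification $\bar{H}^*_N\cong\check{H}^*_N$ of Corollary \ref{Cor.2}). Hence $\mathbf{C}^*$ is associated with $C^*$.

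Finally I would invoke Lemma 5.5 of \cite{BM3}, that is the exact sequence \eqref{eq48}, taking $C^*=\bar{C}^*_N(X,A;\mathbb{Z})$ and $C^*_\alpha=\bar{C}^*_N(X_\alpha,A_\beta;\mathbb{Z})$. By the construction of the homology in Section 2, the cone homology appearing there is precisely $\bar{H}_*(C^*;G)=H_*\left(C_*(\beta_\#)\right)=\bar{H}^N_*(X,A;G)$ and $\bar{H}_*(C^*_\alpha;G)=\bar{H}^N_*(X_\alpha,A_\beta;G)$. Substituting these identifications into \eqref{eq48} reproduces verbatim the long exact sequence \eqref{eq44}, and since each step of the construction is natural in the resolution and in $G$, the resulting sequence is functorial. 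The one point demanding care is the reduction of the "associated" hypothesis to continuity: one must be certain that the cohomology of the direct limit of the $\bar{C}^*_N(X_\alpha,A_\beta;\mathbb{Z})$ genuinely computes $\bar{H}^*_N(X,A;\mathbb{Z})$, which is guaranteed exactly because $\mathbf{p}$ is a resolution by polyhedral pairs and Corollary \ref{Cor.2'} applies; once this is in place, everything else is the formal machinery behind \eqref{eq48}.
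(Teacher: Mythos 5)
Your proposal is correct and follows essentially the same route as the paper: both reduce the theorem to verifying that the direct system of Alexander--Spanier normal cochain complexes of the resolution terms is \emph{associated} (Definition \ref{Def.1}) with the cochain complex of the limit pair, and then invoke Lemma 5.5 of \cite{BM3} to produce the sequence \eqref{eq44}. The only cosmetic difference is that you cite the packaged continuity statement of Corollary \ref{Cor.2'} directly, whereas the paper re-derives that isomorphism in the absolute case from Corollary \ref{Cor.2}, the identification of the cohomology of polyhedra with singular cohomology, and Corollary 8 ii) of \cite{Wat}.
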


\begin{proof} Let ${\bf p}:X \to {\bf X}=\left\{X_\alpha, p_{\alpha \beta }, \alpha \in \mathcal{A}\right\} $  be a resolution. Assume that it is a polyhedral resolution. Consider the direct system $\mathbf{C}^*_N=\{\bar{C}^*_N(X_\alpha) \}$ of the cochain complexes, where  $\bar{C}^*_N(X_\alpha)$ is the Alexander-Spanier cochain complex based on the normal coverings. In this case, ${\bf p}:X \to {\bf X}=\left\{X_\alpha, p_{\alpha \beta }, \alpha \in \mathcal{A}\right\} $ induces the homomorphism
	\begin{equation}\label{eq49}
	{\bf p}^\#: \varinjlim \bar{C}^*_N(X_\alpha;G) \to \bar{C}^*_N(X;G),
	\end{equation}
which itself induces the homomorphism
	\begin{equation}\label{eq50}
    {\bf p}^*: \varinjlim \bar{H}^*_N(X_\alpha;G) \to \bar{H}^*_N(X;G).
    \end{equation}	
By Corollary \ref{Cor.2} we have $\bar{H}^*_N(X;G) \simeq \check{H}^*_N(X;G)$. On the other hand, for each $\alpha$, the space $X_\alpha$ is a polyhedron and so the cohomology group $\check{H}^*_N(X_\alpha;G)$ is isomorphic to the classical Alexander-Spanier cohomology group $\check{H}^*(X_\alpha;G)$, which itself is isomorphic to the singular cohomology group  $\check{H}^*_s(X_\alpha;G)$.(see Corollary 6.9.7 \cite{Sp}. Therefore, by Corollary 8 ii) in \cite{Wat} \eqref{eq50} is an isomorphism. Consequently, by Lemma 5.5 \cite{BM3}, we have the following long exact sequence:
\begin{equation}\label{eq51}
\begin{tikzpicture}

\node (A) {$\dots$};
\node (B) [node distance=2.2cm, right of=A] {$ {\varprojlim} ^ {(3)}   \bar{H}^N _{n+2}(X_\alpha;G) $};
\node (C) [node distance=3.2cm, right of=B] {${\varprojlim} ^ {(1)}  \bar{H}^N_{n+1}(X_\alpha;G)$};
\node (D) [node distance=2.6cm, right of=C] {$ \bar{H}^N_n(X;G)$};
\node (E) [node distance=2.3cm, right of=D] {${\varprojlim}  \bar{H}^N _{n}(X_\alpha;G) $};
\node (F) [node distance=2.8cm, right of=E] {${\varprojlim} ^ {(2)}  \bar{H}^N_ {n+1}(X_\alpha;G) $};
\node (H) [node distance=2.2cm, right of=F] {$\dots$~.};

\draw[->] (A) to node [above]{}(B);
\draw[->] (B) to node [above]{}(C);
\draw[->] (C) to node [above]{}(D);
\draw[->] (D) to node [above]{}(E);
\draw[->] (E) to node [above]{}(F);
\draw[->] (F) to node [above]{}(H);

\end{tikzpicture}
\end{equation}
\end{proof}
By Theorem of \cite{San}, the Alexander-Spanier homology $\bar{H}^N_*(-,-;G)$ is unique (up to a natural equivalence) on the category $\mathcal{K}^2_{Pol}$ and therefore, by Theorem \ref{thm.4}, we obtain (cf. \cite{Ku}, \cite{BK}):

\begin{corollary}\label{Cor.6} If $H_*(-,-;G)$ and $H'_*(-,-;G)$ are exact homology theories on a subcategory $\mathcal{K}^2 \subset \mathcal{K}^2_{Top},$ which  satisfies $UCF_{\mathcal{K}^2_{Pol}}$ and $CEH$ properties, then any natural  transformation (a mapping between corresponding long exact sequences involving derivative limits) $T:H_*\to H'_*$, which is an isomorphism for the one point topological space, is the isomorphism for any closed $P$-pair $(X,A) \in \mathcal{K}^2$. 
\end{corollary}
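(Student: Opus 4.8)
The plan is to follow the classical comparison-theorem strategy of Eilenberg--Steenrod, adapted to the present setting where the homology theories are only ``partially exact'' in the sense that continuity ($CEH$) replaces honest continuity. I would build the natural transformation $T$ by a two-stage extension: first establish that $T$ is an isomorphism on the subcategory $\mathcal{K}^2_{Pol}$ of polyhedral pairs, and then propagate this to arbitrary closed $P$-pairs by means of the $CEH$ long exact sequences and a $\varprojlim$-comparison argument. The underlying principle is that a $P$-pair $(X,A)$ admits a polyhedral resolution $\mathbf{p}\colon (X,A) \to \{(X_\alpha,A_\beta)\}$, so its homology is determined by the polyhedral stages through the derived-limit spectral data recorded in \eqref{eq44}.

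First I would treat the polyhedral case. By hypothesis both $H_*$ and $H'_*$ satisfy $UCF_{\mathcal{K}^2_{Pol}}$, so for each polyhedral pair and each $n$ we have the two Universal Coefficient short exact sequences of \eqref{eq46}, and $T$ induces a map between them. The outer terms $\Ext(\bar{H}^{n+1}_N(X,A);G)$ and $\Hom(\bar{H}^n_N(X,A);G)$ are built functorially from the \emph{same} Alexander--Spanier normal cohomology $\bar{H}^*_N(-,-;\mathbb{Z})$ for both theories, so the maps on the outer terms induced by $T$ are identities (or at least canonical isomorphisms). By the Five Lemma applied to the ladder of short exact sequences, $T$ is then an isomorphism on every polyhedral pair. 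I should double-check that $T$, being a natural transformation of the full homological data (long exact sequences with derived limits), is compatible with the $UCF$ sequences; the hypothesis that $T$ is an isomorphism for the one-point space is the anchoring input that, combined with the dimension axiom (Theorem~\ref{thm.3}) and the exactness and homotopy axioms on $\mathcal{K}^2_{Pol}$, forces agreement on polyhedra. This is essentially the uniqueness theorem of \cite{San} invoked in the text.

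Next I would pass from polyhedra to a general closed $P$-pair $(X,A) \in \mathcal{K}^2$. Choose a polyhedral resolution $\mathbf{p}\colon (X,A) \to \{(X_\alpha,A_\beta)\}$. Applying $CEH$ to both $H_*$ and $H'_*$ yields two copies of the long exact sequence \eqref{eq44}, and $T$ induces a morphism between them. On each polyhedral stage $(X_\alpha,A_\beta)$ the component $T$ is an isomorphism by the first step; therefore all the derived-limit terms $\varprojlim^{(k)} H_{n+k}(X_\alpha,A_\beta;G)$ and $\varprojlim^{(k)} H'_{n+k}(X_\alpha,A_\beta;G)$ are matched isomorphically by the functoriality of $\varprojlim^{(k)}$. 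A Five-Lemma (or rather a direct diagram chase through the infinite exact sequence) argument then forces $T\colon H_n(X,A;G) \to H'_n(X,A;G)$ to be an isomorphism for every $n$.

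The main obstacle I anticipate is the bookkeeping of naturality at the level of the \emph{full} long exact sequences, not just the homology groups. One must verify that a natural transformation $T$ genuinely commutes with the connecting maps of \eqref{eq44} and with the structure maps $\varprojlim^{(k)}$ induced by $\mathbf{p}$, so that the morphism of long exact sequences is well-defined and the derived-limit terms on the two sides are identified via the \emph{same} isomorphisms. A subtler point is that \eqref{eq44} is an \emph{infinite} exact sequence involving all higher derived limits, so the Five Lemma must be applied in its long (four-lemma/chase) form, inducting on the position in the sequence; care is needed to ensure the isomorphism on stages actually transfers through every $\varprojlim^{(k)}$, which relies on exactness of the derived-limit functors applied to a levelwise isomorphism of inverse systems. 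Once these compatibilities are established, the conclusion follows formally.
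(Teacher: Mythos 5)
Your proposal is correct and follows essentially the same route as the paper, which likewise settles the polyhedral case by citing Saneblidze's uniqueness theorem (the fallback you yourself invoke after noting that compatibility of $T$ with the $UCF$ ladders is not automatic) and then extends to an arbitrary closed $P$-pair through the $CEH$ sequences. Your second stage --- levelwise isomorphisms on the polyhedral stages induce isomorphisms on all $\varprojlim^{(k)}$ terms, and the five lemma applied around $H_n(X,A;G)$ in the ladder of the two $CEH$ sequences finishes the argument --- is precisely what the paper compresses into its one-sentence derivation of the corollary from Theorem \ref{thm.4}.
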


In cases when homologies $\bar{H}_*$ and $\bar{H}'_*$ are generated by cochain complexes $C^*$ and $C'^*,$  using the method developed in \cite{BM3}, any cochain map $f^*:C'^* \to C^*$ induces a natural transformation $f_*:\bar{H}_*\to \bar{H}'_*$ that is mentioned in Corollary \ref{Cor.6}:

\begin{equation}\label{eq52}
\begin{tikzpicture}\small

\node (A) {$\dots$};
\node (B) [node distance=2.2cm, right of=A] {$ {\varprojlim} ^ {(3)}   \bar{H}^N _{n+2}(X_\alpha, A_\beta;G) $};
\node (C) [node distance=3.2cm, right of=B] {${\varprojlim} ^ {(1)}  \bar{H}^N_{n+1}(X_\alpha,A_\beta;G)$};
\node (D) [node distance=2.6cm, right of=C] {$ \bar{H}^N_n(X,A;G)$};
\node (E) [node distance=2.3cm, right of=D] {${\varprojlim}  \bar{H}^N _{n}(X_\alpha,A_\beta;G) $};
\node (F) [node distance=2.8cm, right of=E] {${\varprojlim} ^ {(2)}  \bar{H}^N_ {n+1}(X_\alpha,A_\beta;G) $};
\node (H) [node distance=2.2cm, right of=F] {$\dots$~};

\draw[->] (A) to node [above]{}(B);
\draw[->] (B) to node [above]{}(C);
\draw[->] (C) to node [above]{}(D);
\draw[->] (D) to node [above]{}(E);
\draw[->] (E) to node [above]{}(F);
\draw[->] (F) to node [above]{}(H);

\node (A) {$\dots$};
\node (A1) [node distance=1.5cm, below of=A] {$\dots$};
\node (B1) [node distance=1.5cm, below of=B] {$ {\varprojlim} ^ {(3)}   \bar{H}'^N _{n+2}(X_\alpha, A_\beta;G) $};
\node (C1) [node distance=1.5cm, below of=C] {${\varprojlim} ^ {(1)}  \bar{H}'^N_{n+1}(X_\alpha,A_\beta;G)$};
\node (D1) [node distance=1.5cm, below of=D] {$ \bar{H}'^N_n(X,A;G)$};
\node (E1) [node distance=1.5cm, below of=E] {${\varprojlim}  \bar{H}'^N _{n}(X_\alpha,A_\beta;G) $};
\node (F1) [node distance=1.5cm, below of=F] {${\varprojlim} ^ {(2)}  \bar{H}'^N_ {n+1}(X_\alpha,A_\beta;G) $};
\node (H1) [node distance=1.5cm, below of=H] {$\dots$~.};

\draw[->] (A1) to node [above]{}(B1);
\draw[->] (B1) to node [above]{}(C1);
\draw[->] (C1) to node [above]{}(D1);
\draw[->] (D1) to node [above]{}(E1);
\draw[->] (E1) to node [above]{}(F1);
\draw[->] (F1) to node [above]{}(H1);

\draw[->] (B) to node [right]{$f_*$}(B1);
\draw[->] (C) to node [right]{$f_*$}(C1);
\draw[->] (D) to node [right]{$f_*$}(D1);
\draw[->] (E) to node [right]{$f_*$}(E1);
\draw[->] (F) to node [right]{$f_*$}(F1);

\end{tikzpicture}
\end{equation}
For example, if $C^*_s(X;G)$ is the singular cochain complex of topological spaces $X$ and  $\bar{C}^s_*(X;G)=\Hom\left(C^*_s(X);\beta_\#\right),$ then we can construct   the homology $\bar{H}_*^s(X;G)$  of the obtained chain complex $\bar{C}^s_*(X;G).$ Therefore, $\bar{H}_*^s(-;G)$ is the homology generated by the singular cochain complex  ${C}^*_s(-;G).$ It is known that there is a homomorphism $j^\#:\bar{C}^*(X;G) \to C^*_s(X;G)$ from the Alexander-Sapnier cochain complex to the singular cochain complex, which induces the isomorphism $j^*:\bar{H}^*(X;G) \to \bar{H}^*_s(X;G)$ on the category of manifolds. On the other hand, for a manifold $X$ the Alexander-Spanier cochain complex $\bar{C}^*(X;G)$ coincides with the Alexander-Spanier normal cochain complex $\bar{C}_N^*(X;G).$ Therefore, by Corollary \ref{Cor.6}, there is an isomorphism:
\begin{equation}\label{eq53}
j_*:\bar{H}^s_*(X;G) \os{\simeq}{\lra} \bar{H}^N_*(X;G).
\end{equation}

Now, using the method developed in \cite{BM2} we find the relation between $UCF$, $CEH$ and $CIG$ axioms.

\begin{theorem}\label{thm.5} 
	If ${H}_*$ is an exact homological functor defined on the category $\mathcal{K}^2_{Top}$ of closed $P$-pairs, which  satisfies $UCF_{\mathcal{K}^2_{Pol}}$ and $CEH$ properties, then it satisfies $CIG$ property for all closed $P$-pairs.
\end{theorem}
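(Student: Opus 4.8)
The plan is to feed the $UCF_{\mathcal{K}^2_{Pol}}$ isomorphisms into the $CEH$ exact sequence and to show that, for an injective coefficient group, every derived inverse limit of positive order occurring there vanishes, so that the sequence collapses to the required isomorphism.

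First I would fix a polyhedral resolution $\mathbf{p}:(X,A)\to \{(X_\alpha,A_\beta),p_{\alpha\beta}\}$ of the given closed $P$-pair, the setting in which both $CEH$ and $UCF_{\mathcal{K}^2_{Pol}}$ apply. Since $G$ is injective, $\Ext(M;G)=0$ for every abelian group $M$; hence for each polyhedral pair $(X_\alpha,A_\beta)$ the $UCF_{\mathcal{K}^2_{Pol}}$ sequence degenerates to a natural isomorphism
\[
H_m(X_\alpha,A_\beta;G)\os{\simeq}{\lra}\Hom(\bar{H}^m_N(X_\alpha,A_\beta);G).
\]
Functoriality of $UCF$ makes these isomorphisms commute with the bonding maps, so the inverse system $\{H_m(X_\alpha,A_\beta;G)\}$ is isomorphic to the inverse system $\{\Hom(\bar{H}^m_N(X_\alpha,A_\beta);G)\}$ obtained by applying the contravariant functor $\Hom(-;G)$ to the direct system of cohomology groups $\{\bar{H}^m_N(X_\alpha,A_\beta)\}$ (the cohomology of an inverse system of spaces being a direct system).

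The decisive step, which I would isolate as a lemma, is that $\varprojlim^{(k)}\Hom(\bar{H}^m_N(X_\alpha,A_\beta);G)=0$ for every $k\ge 1$. This rests on a purely homological fact about injective $G$: the functor $\Hom(-;G)$ is exact and carries a direct system over the directed index set to an inverse system with vanishing higher derived limits. Concretely, applying $\Hom(-;G)$ to a projective resolution of the direct system (in the category of direct systems over the index set) produces a $\varprojlim$-acyclic resolution of $\{\Hom(\bar{H}^m_N(X_\alpha,A_\beta);G)\}$, and since the composite $\varprojlim\circ\Hom(-;G)\cong\Hom(\varinjlim(-);G)$ is exact, the associated Grothendieck spectral sequence forces $\varprojlim^{(k)}=0$ for $k\ge 1$; this is precisely the mechanism exploited in \cite{BM2}. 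I expect this derived-limit vanishing to be the main obstacle, its technical core being the verification that the $\Hom(-;G)$-duals of projective direct systems are $\varprojlim$-acyclic.

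With these vanishings in hand I would substitute them into the $CEH$ sequence \eqref{eq44}. Every term ${\varprojlim}^{(k)}H_{n+k}(X_\alpha,A_\beta;G)$ with $k\ge 1$ disappears, so the relevant portion reduces to
\[
0={\varprojlim}^{(1)}H_{n+1}(X_\alpha,A_\beta;G)\lra H_n(X,A;G)\lra\varprojlim H_n(X_\alpha,A_\beta;G)\lra{\varprojlim}^{(2)}H_{n+1}(X_\alpha,A_\beta;G)=0,
\]
and exactness yields the canonical isomorphism $H_n(X,A;G)\approx\varprojlim H_n(X_\alpha,A_\beta;G)$, which is exactly the $CIG$ property. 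Once the derived-limit lemma is established, the collapse of the $CEH$ sequence is formal.
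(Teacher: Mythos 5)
Your proposal is correct and follows essentially the same route as the paper's proof: both use the functoriality of $UCF_{\mathcal{K}^2_{Pol}}$ together with the vanishing of $\Ext(-;G)$ for injective $G$ to identify the inverse system $\left\{H_m(X_\alpha,A_\beta;G)\right\}$ with $\left\{\Hom\left(\bar{H}^m_N(X_\alpha,A_\beta);G\right)\right\}$, then kill all higher derived limits of the latter and collapse the $CEH$ sequence to the isomorphism of $CIG$. The only differences are cosmetic: the paper first reduces to the absolute case (using that the restriction of a resolution of a closed $P$-pair to $A$ is again a resolution), and where you sketch a Grothendieck-spectral-sequence proof of the key vanishing ${\varprojlim}^{(k)}\Hom\left(\bar{H}^m_N(X_\alpha,A_\beta);G\right)=0$ for $k\ge 1$, the paper simply cites Lemma 1.3 of \cite{HM}, which is precisely that statement.
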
 

\begin{proof} 
	Let ${\bf p}:(X,A) \to {\bf (X,A)}=\left\{(X_\alpha, A_\beta), p_{\alpha \beta }, \alpha \in \mathcal{A}\right\} $ be a resolution. The pair $(X,A)$ is a closed $P$-pair and so, the restriction ${\bf p}_{|A}:A \to {\bf A}=\left\{ A_\beta, p_{\alpha \alpha' |{A_\alpha}}, \alpha \in \mathcal{A}\right\} $ is a resolution \cite{MS}. Therefore, it is sufficient to prove in the absolute case. Hence, for each resolution ${\bf p}:X \to {\bf X}=\left\{X_\alpha, p_{\alpha \alpha' }, \alpha \in \mathcal{A}\right\} $ and  an injective abelian group $G'$ we have to show that:
	\begin{equation}\label{eq54}
    H_n(X;G') \simeq {\varprojlim} H_{n}(X_\alpha; G').
    \end{equation}
    By the condition of the Theorem, for each  abelian group $G$ we have the following long exact sequence:
	\begin{equation}\label{eq55}
	\begin{tikzpicture}
	
	\node (A) {$\dots$};
	\node (B) [node distance=2cm, right of=A] {$ {\varprojlim} ^ {(3)}   {H} _{n+2}(X_\alpha;G) $};
	\node (C) [node distance=3cm, right of=B] {${\varprojlim} ^ {(1)}  H_{n+1}(X_\alpha;G)$};
	\node (D) [node distance=2.5cm, right of=C] {$ H_n(X;G)$};
	\node (E) [node distance=2.2cm, right of=D] {${\varprojlim}  {H} _{n}(X_\alpha;G) $};
	\node (F) [node distance=2.8cm, right of=E] {${\varprojlim} ^ {(2)}  {H}_ {n+1}(X_\alpha;G) $};
	\node (H) [node distance=2.2cm, right of=F] {$\dots$~.};
	
	\draw[->] (A) to node [above]{}(B);
	\draw[->] (B) to node [above]{}(C);
	\draw[->] (C) to node [above]{}(D);
	\draw[->] (D) to node [above]{}(E);
	\draw[->] (E) to node [above]{}(F);
	\draw[->] (F) to node [above]{}(H);
	
	\end{tikzpicture}
	\end{equation}
	Therefore, we should show that for each  injective abelian group $G$ the derivatives are trivial:
	\begin{equation}\label{eq56}
	{\varprojlim}^{(i)} H_{n+1}(X_\alpha; G)=0, ~~~~ i \ge 1.
	\end{equation}
	Indeed, for each polyhedron  $X_\alpha$ we have the sequnce:
	\begin{equation}\label{eq57}
	0 \to \Ext\left(\bar{H}^{n+1}_N(X_{\alpha});G\right) \to H_n (X_{\alpha};G) \to \Hom\left(\bar{H}^n_N(X_{\alpha});G\right) \to 0,
	\end{equation}
	which induces the long exact sequence:

	\begin{equation}\label{eq58}
	\begin{tikzpicture}
	\node (A) {0};
	\node (B) [node distance=3cm, right of=A] {$\varprojlim \Ext\left(\bar{H}^{n+1}_N(X_{\alpha}),G\right)$};
	\node (C) [node distance=4cm, right of=B] {$\varprojlim H_n (X_{\alpha};G)$};
	\node (D) [node distance=4cm, right of=C] {$\varprojlim \Hom\left(\bar{H}^n_N(X_{\alpha}),G\right)$};
	\node (E) [node distance=3cm, right of=D] {};
	
	\draw[->] (A) to node [above]{}(B);
	\draw[->] (B) to node [above]{}(C);
	\draw[->] (C) to node [above]{}(D);
	\draw[->] (D) to node [above]{}(E);
	
	\node (A1) [node distance=1cm, below of=A] {};
	\node (B1) [node distance=1cm, below of=B] {${\varprojlim}^1 \Ext\left(\bar{H}^{n+1}_N(X_{\alpha}),G\right)$};
	\node (C1) [node distance=1cm, below of=C] {${\varprojlim}^1 H_n (X_{\alpha};G_0)$};
	\node (D1) [node distance=1cm, below of=D] {${\varprojlim}^1 \Hom\left(\bar{H}^n_N(X_{\alpha}),G_0\right)$};
	\node (E1) [node distance=1cm, below of=E] {};
	
	\draw[->] (A1) to node [above]{}(B1);
	\draw[->] (B1) to node [above]{}(C1);
	\draw[->] (C1) to node [above]{}(D1);
	\draw[->] (D1) to node [above]{}(E1);
	
	\node (A2) [node distance=1cm, below of=A1] {};
	\node (B2) [node distance=1cm, below of=B1] {${\varprojlim}^2 \Ext\left(\bar{H}^{n+1}_N(X_{\alpha}),G\right)$};
	\node (C2) [node distance=1cm, below of=C1] {${\varprojlim}^2 H_n (X_{\alpha};G_0)$};
	\node (D2) [node distance=1cm, below of=D1] {${\varprojlim}^2 \Hom\left(\bar{H}^n_N(X_{\alpha}),G_0\right)$};
	\node (E2) [node distance=1cm, below of=E1] {$\dots ~.$};
	
	\draw[->] (A2) to node [above]{}(B2);
	\draw[->] (B2) to node [above]{}(C2);
	\draw[->] (C2) to node [above]{}(D2);
	\draw[->] (D2) to node [above]{}(E2);
	
	\end{tikzpicture}
	\end{equation}
	For each  injective abelian group $G$ the functor $\Ext(-;G)$ is trivial and so, there is the isomorphism:
	\begin{equation}\label{eq59}
	{\varprojlim}^{(i)} H_n (X_{\alpha};G) \simeq {\varprojlim}^{(i)} \Hom\left(H^n_N(X_{\alpha});G\right), ~~~i \ge 0.
	\end{equation}
	On the other hand, by Proposition 2 of \cite{HM}, for the direct system $ {\bar{H}^*_N({\bf X})}=\left\{ \bar{H}^n_N(X_\alpha),p_{\alpha, \alpha'}, \mathcal{A} \right\}$ we have:
	$$0 \to {\varprojlim}^1 \Hom\left(H^{n+1}_N(X_\alpha);G\right)  \to \Ext\left( \varinjlim H^n_N(X_\alpha);G\right) ~~\to ~~\varprojlim \Ext\left(H^n_N(X_\alpha);G\right)~~ \to$$
	\begin{equation}\label{eq60}
	~~ {\varprojlim}^2 \Hom\left(H^{n+1}_N(X_\alpha);G\right) ~~\to ~~0.
	\end{equation}
	By Lemma 1.3. of \cite{HM} for each  injective abelian group $G$ we obtain:
	\begin{equation}\label{eq61}
	~~ {\varprojlim}^{(i)} \Hom\left(H^{n+1}(X_\alpha);G\right)=0,~~~i \ge 1.
	\end{equation}
	Therefore, by \eqref{eq55}, \eqref{eq59} and \eqref{eq61} we obtain:
	\begin{equation}\label{eq62}
	{H}_n(X;G) \approx \varprojlim H_n(X_\alpha;G).
	\end{equation} 
	
\end{proof}

\begin{theorem} \label{thm.6}
	If ${H}_*$ is an exact  homological functor defined on the category $\mathcal{K}^2_{Top}$ of closed $P$-pairs, which isatisfies $UCF_{\mathcal{K}^2_{Pol}}$ and $CIG$ property, then it satisfies $UCF$ property for all closed $P$-pairs.
\end{theorem}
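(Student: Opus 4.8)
The plan is to establish the Universal Coefficient Formula first for \emph{injective} coefficient groups on every closed $P$-pair, and then to pass from the injective case to an arbitrary group $G$ by resolving $G$. Throughout, fix a polyhedral resolution ${\bf p}:(X,A) \to {\bf (X,A)}=\left\{(X_\alpha, A_\beta), p_{\alpha \beta }\right\}$ of the given closed $P$-pair $(X,A)$, which exists in the Watanabe framework and to which $CIG$ applies. Working directly with pairs, I would rely on three inputs: the hypothesis $UCF_{\mathcal{K}^2_{Pol}}$ on each polyhedral term, the hypothesis $CIG$, and the continuity of the Alexander-Spanier normal cohomology (Corollary \ref{Cor.2'}), which yields $\varinjlim \bar{H}^n_N(X_\alpha,A_\beta) \simeq \bar{H}^n_N(X,A)$.

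First I would treat an injective group $G$. By $CIG$ the canonical projection gives $H_n(X,A;G) \simeq \varprojlim H_n(X_\alpha,A_\beta;G)$. Each $(X_\alpha,A_\beta)$ is polyhedral, so $UCF_{\mathcal{K}^2_{Pol}}$ applies, and since $\Ext(-;G)=0$ for injective $G$ its sequence collapses to a natural isomorphism of inverse systems $H_n(X_\alpha,A_\beta;G) \simeq \Hom(\bar{H}^n_N(X_\alpha,A_\beta);G)$. Taking $\varprojlim$ and using that $\Hom(-;G)$ sends direct limits to inverse limits together with the cohomology continuity above, I obtain a natural isomorphism
$$H_n(X,A;G) \simeq \Hom\left(\bar{H}^n_N(X,A);G\right) \qquad (\star)$$
valid for every injective $G$ and every closed $P$-pair, and natural in $G$ because each of its three constituents is.

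Next I would pass to an arbitrary abelian group $G$ via an injective resolution $0 \to G \os{\alpha}{\lra} G' \os{\beta}{\lra} G'' \to 0$ with $G',G''$ injective, and invoke the long exact coefficient sequence afforded by the cone construction of Section 2:
$$\cdots \to H_{n+1}(X,A;G') \xrightarrow{\beta_*} H_{n+1}(X,A;G'') \xrightarrow{\partial} H_n(X,A;G) \xrightarrow{\alpha_*} H_n(X,A;G') \xrightarrow{\beta_*} H_n(X,A;G'') \to \cdots$$
Splitting it into short exact pieces gives
$$0 \to \Coker\big(\beta_*:H_{n+1}(X,A;G') \to H_{n+1}(X,A;G'')\big) \to H_n(X,A;G) \to \Ker\big(\beta_*:H_n(X,A;G') \to H_n(X,A;G'')\big) \to 0.$$
Since $G',G''$ are injective, $(\star)$ identifies each $\beta_*$ with $\Hom(\bar{H}^*_N(X,A);\beta)$. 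Applying $\Hom(\bar{H}^n_N(X,A);-)$ to the resolution produces the four-term exact sequence
$$0 \to \Hom\left(\bar{H}^n_N(X,A);G\right) \to \Hom\left(\bar{H}^n_N(X,A);G'\right) \xrightarrow{\beta_*} \Hom\left(\bar{H}^n_N(X,A);G''\right) \to \Ext\left(\bar{H}^n_N(X,A);G\right) \to 0,$$
which identifies the kernel with $\Hom(\bar{H}^n_N(X,A);G)$ and the degree $n+1$ cokernel with $\Ext(\bar{H}^{n+1}_N(X,A);G)$. Substituting into the spliced sequence yields precisely \eqref{eq46}, and naturality of $(\star)$ and of the coefficient sequence makes it functorial in $(X,A)$.

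The limit-exchange formulas and the exactness of $\Hom$ into injectives are routine; the delicate point is the passage from $(\star)$ to general $G$. Concretely, the main obstacle is to guarantee that $(\star)$ is genuinely natural in the coefficient group—so that $\beta_*$ on homology really becomes $\Hom(\mathrm{id};\beta)$—and that the connecting homomorphism $\partial$, after these identifications, realizes exactly the inclusion of the $\Ext$-term; this is what forces the spliced short exact sequence to be the canonical Universal Coefficient Formula rather than merely an abstractly isomorphic one. Verifying the naturality of the three isomorphisms composing $(\star)$ (the $CIG$ isomorphism as the canonical projection, the polyhedral $UCF$ map, and the $\Hom$–$\varprojlim$ exchange), and checking that the homology theory indeed carries the coefficient long exact sequence used above, is therefore the crux of the argument.
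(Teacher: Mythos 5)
Your proof follows essentially the same route as the paper's: first the injective case via $CIG$, the collapsed polyhedral $UCF$, the $\Hom$--$\varprojlim$ exchange and the continuity of $\bar{H}^*_N$, then splicing the long exact coefficient sequence of an injective resolution $0 \to G \to G' \to G''\to 0$ and identifying the resulting kernel and cokernel with $\Hom\left(\bar{H}^n_N;G\right)$ and $\Ext\left(\bar{H}^{n+1}_N;G\right)$. The only differences are cosmetic: the paper first reduces to the absolute case (using that the restriction of a resolution of a closed $P$-pair to $A$ is again a resolution), and it shares, without flagging it, the same implicit reliance on $H_*$ admitting a coefficient long exact sequence that you correctly single out as the delicate point.
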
 
\begin{proof} Let ${\bf p}:(X,A) \to {\bf (X,A)}=\left\{(X_\alpha, A_\beta), p_{\alpha \beta }, \alpha \in \mathcal{A}\right\} $ be a resolution. The $(X,A)$ is a closed $P$-pair and so, restriction ${\bf p}_{|A}:A \to {\bf A}=\left\{ A_\beta, p_{\alpha \alpha' |{A_\alpha}}, \alpha \in \mathcal{A}\right\} $ is a the resolution \cite{MS}. Therefore, it is sufficient to prove in the absolute case. By the  condition of the Theorem, for each resolution ${\bf p}:X \to {\bf X}=\left\{X_\alpha, p_{\alpha \alpha' }, \alpha \in \mathcal{A}\right\} $ and  an injective abelian group $G$ we have an isomorphism:
	\begin{equation}\label{eq63}
	{H}_n(X;G) \approx\varprojlim H_n(X_\alpha;G).
	\end{equation}
By the condition of the Theorem, for each $X_\alpha$  we have the exact sequence:
	\begin{equation}\label{eq64}
	0 \to \Ext\left(\bar{H}^{n+1}_N(X_{\alpha});G\right) \to H_n (X_{\alpha};G) \to \Hom\left(\bar{H}^n_N(X_{\alpha});G\right) \to 0,
	\end{equation}
	which induces the long exact sequence:
	
	\begin{equation}\label{eq65}
	\begin{tikzpicture}
	\node (A) {0};
	\node (B) [node distance=3cm, right of=A] {$\varprojlim \Ext\left(\bar{H}^{n+1}_N(X_{\alpha}),G\right)$};
	\node (C) [node distance=4cm, right of=B] {$\varprojlim H_n (X_{\alpha};G)$};
	\node (D) [node distance=4cm, right of=C] {$\varprojlim \Hom\left(\bar{H}^n_N(X_{\alpha}),G\right)$};
	\node (E) [node distance=3cm, right of=D] {};
	
	\draw[->] (A) to node [above]{}(B);
	\draw[->] (B) to node [above]{}(C);
	\draw[->] (C) to node [above]{}(D);
	\draw[->] (D) to node [above]{}(E);
	
	\node (A1) [node distance=1cm, below of=A] {};
	\node (B1) [node distance=1cm, below of=B] {${\varprojlim}^1 \Ext\left(\bar{H}^{n+1}_N(X_{\alpha}),G\right)$};
	\node (C1) [node distance=1cm, below of=C] {${\varprojlim}^1 H_n (X_{\alpha};G_0)$};
	\node (D1) [node distance=1cm, below of=D] {${\varprojlim}^1 \Hom\left(\bar{H}^n_N(X_{\alpha}),G_0\right)$};
	\node (E1) [node distance=1cm, below of=E] {};
	
	\draw[->] (A1) to node [above]{}(B1);
	\draw[->] (B1) to node [above]{}(C1);
	\draw[->] (C1) to node [above]{}(D1);
	\draw[->] (D1) to node [above]{}(E1);
	
	\node (A2) [node distance=1cm, below of=A1] {};
	\node (B2) [node distance=1cm, below of=B1] {${\varprojlim}^2 \Ext\left(\bar{H}^{n+1}_N(X_{\alpha}),G\right)$};
	\node (C2) [node distance=1cm, below of=C1] {${\varprojlim}^2 H_n (X_{\alpha};G_0)$};
	\node (D2) [node distance=1cm, below of=D1] {${\varprojlim}^2 \Hom\left(\bar{H}^n_N(X_{\alpha}),G_0\right)$};
	\node (E2) [node distance=1cm, below of=E1] {$\dots ~.$};
	
	\draw[->] (A2) to node [above]{}(B2);
	\draw[->] (B2) to node [above]{}(C2);
	\draw[->] (C2) to node [above]{}(D2);
	\draw[->] (D2) to node [above]{}(E2);
	
	\end{tikzpicture}
	\end{equation}
	Note that for each injective abelian group $G$ the functor $\Ext(-;G)$ is trivial and by \eqref{eq65} we obtain the isomorphism:
	\begin{equation}\label{eq66}
	~~ {\varprojlim}H_n(X_\alpha;G) \approx {\varprojlim} \Hom\left(\bar{H}^n_N(X_\alpha);G\right).
	\end{equation}
	If we apply the isomorphism ${\varprojlim} \Hom\left(\bar{H}^n_N(X_\alpha);G\right) \approx \Hom \left({ \varinjlim} \bar{H}^n_N(X_\alpha);G\right)$, then by \eqref{eq66} we obtain:
	\begin{equation}\label{eq67}
	~~ {\varprojlim}H_n(X_\alpha;G) \approx \Hom\left({\varinjlim} \bar{H}^n(X_\alpha);G\right)=\Hom\left( \bar{H}^n_N(X);G\right).
	\end{equation}
	Therefore, by \eqref{eq63}, if $G$ is an injective, then
	\begin{equation}\label{eq68}
	{H}_n(X;G) \approx \varprojlim \bar{H}_n(X_\alpha;G) \approx \Hom\left(  \bar{H}^n_N(X);G\right).
	\end{equation}
	Now consider any abelian group $G$ and the corresponding injective resolution:
	\begin{equation}\label{eq69}
	~~0 \to G \to G' \to G'' \to 0.
	\end{equation}
	Apply to the sequence \eqref{eq69} by the functor $\Hom\left(\bar{H}^n_N(X);-\right)$. The abelian groups $G'$ and $G''$ are injective and so we have: 	
	\begin{equation}\label{eq70}
	\begin{tikzpicture}
	\node (A) {$0$};
	\node (B) [node distance=2cm, right of=A] {$\Hom\left(\bar{H}^n_N(X);G\right)$};
	\node (C) [node distance=3cm, right of=B] {$\Hom\left(\bar{H}^n_N(X);G'\right)$};
	\node (D) [node distance=3.2cm, right of=C] {$\Hom\left(\bar{H}^n_N(X);G''\right)$};
	\node (E) [node distance=3cm, right of=D] {$\Ext\left(\bar{H}^n_N(X);G\right)$};
	\node (F) [node distance=2cm, right of=E] {$0.$};
	
	\draw[->] (A) to node [above]{}(B);
	\draw[->] (B) to node [above]{}(C);
	\draw[->] (C) to node [above]{}(D);
	\draw[->] (D) to node [above]{}(E);
	\draw[->] (E) to node [above]{}(F);	
    \end{tikzpicture}
	\end{equation}
	Therefore, for each integer $n \in N$ we have
	\begin{equation}\label{eq71}
	\Hom\left(\bar{H}^n_N(X);G\right) \simeq Ker\left(\Hom(\bar{H}^n_N(X);G'\right) \to  \Hom\left(\bar{H}^n_N(X);G'')\right),
	\end{equation}
	\begin{equation}\label{eq72}
	\Ext\left(\bar{H}^n_N(X);G\right) \simeq Coker\left(\Hom(\bar{H}^n_N(X);G'\right) \to  \Hom\left(\bar{H}^n_N(X);G''\right).
	\end{equation}
	Now apply sequence \eqref{eq69} by the homological bifunctor $\bar{H}^N_*(X;-)$, which gives the following long exact sequence:
	\begin{equation}\label{eq73}
	 \dots  \to \bar{H}^N_{n+1}(X;G') \to  \bar{H}^N_{n+1}(X;G'') \to  \bar{H}^N_{n}(X;G) \to \bar{H}^N_{n}(X;G') \to  \bar{H}^N_{n}(X;G'') \to \dots~~.
	\end{equation}
	Therefore, for each $n \in N$ we obtain the following short exact sequence:
	\begin{equation}\label{eq74}
	\begin{tikzpicture}
	\node (A) {$0$};
	\node (B) [node distance=3.5cm, right of=A] {$Coker\left(\bar{H}^N_{n+1}(X;G') \to  \bar{H}^N_{n+1}(X;G'')\right)$};
	\node (C) [node distance=4.5cm, right of=B] {$\Hom(\bar{H}^n_N(X);G)$};
	\node (D) [node distance=2cm, right of=C] {};
	\node (B1) [node distance=1cm, below of=B] {$Ker \left(\bar{H}^N_{n}(X;G') \to  \bar{H}^N_{n}(X;G'')\right)$};
	\node (C1) [node distance=3cm, right of=B1] {$0~.$};
	
	\draw[->] (A) to node [above]{}(B);
	\draw[->] (B) to node [above]{}(C);
	\draw[->] (C) to node [above]{}(D);
	\draw[->] (B1) to node [above]{}(C1);
		
	\end{tikzpicture}
	\end{equation}
	By \eqref{eq74}, \eqref{eq72} and \eqref{eq71} we obtain that for each $X \in \mathcal{K}^2_{Top}$ there is a short exact sequence:
	
	\begin{equation}\label{eq75}
	\begin{tikzpicture}
	\node (A) {$0$};
	\node (B) [node distance=2cm, right of=A] {$\Ext\left(\bar{H}^{n+1}_N(X);G\right)$};
	\node (C) [node distance=2.5cm, right of=B] {${H}_n(X;G)$};
	\node (D) [node distance=2.5cm, right of=C] {$\Hom\left(\bar{H}^n_N(X);G\right)$};
	\node (E) [node distance=2cm, right of=D] {$0$};

	\draw[->] (A) to node [above]{}(B);
	\draw[->] (B) to node [above]{}(C);
	\draw[->] (C) to node [above]{}(D);
	\draw[->] (D) to node [above]{}(E);
	\end{tikzpicture}
	\end{equation}
\end{proof}

\begin{corollary}\label{Cor.7}  The Alexander-Spanier normal homology theory $\bar{H}_*^N(-,-;G)$ defined on the category $\mathcal{K}^2_{Top}$ satisfies $CEH$, $CIG$ and $UCF$ properties.
\end{corollary}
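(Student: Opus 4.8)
The plan is to obtain all three properties by assembling results already proved for $\bar{H}^N_*(-,-;G)$, so that Corollary~\ref{Cor.7} reduces to an organizational statement. First I would record that $\bar{H}^N_*(-,-;G)$ is genuinely an exact homological functor on $\mathcal{K}^2_{Top}$: this is the content of Corollary~\ref{Cor.3}, whose long exact homological sequence \eqref{eq27} places $\bar{H}^N_*$ within the class of functors to which Theorems~\ref{thm.5} and~\ref{thm.6} apply. Verifying this standing hypothesis is the only point requiring care, and it is already settled.

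Next I would dispose of the $CEH$ property, which is immediate: it is exactly the conclusion of Theorem~\ref{thm.4}, where the associated direct system $\mathbf{C}^*_N=\{\bar{C}^*_N(X_\alpha)\}$ of Alexander-Spanier cochain complexes, together with Lemma~5.5 of~\cite{BM3}, produces the required derived-limit exact sequence \eqref{eq51}. For the $UCF$ property I would invoke Corollary~\ref{Cor.5}, which already furnishes the short exact sequence \eqref{eq38} for every closed $P$-pair $(X,A)\in\mathcal{K}^2_{Top}$; since every pair of polyhedra is paracompact and hence a closed $P$-pair, the subcategory $\mathcal{K}^2_{Pol}$ is contained in $\mathcal{K}^2_{Top}$, and thus $\bar{H}^N_*$ satisfies both $UCF$ and, in particular, $UCF_{\mathcal{K}^2_{Pol}}$.

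Finally, with $UCF_{\mathcal{K}^2_{Pol}}$ and $CEH$ now in hand, Theorem~\ref{thm.5} delivers the $CIG$ property for all closed $P$-pairs, completing the triple. As an internal consistency check one may also feed $UCF_{\mathcal{K}^2_{Pol}}$ together with the just-obtained $CIG$ into Theorem~\ref{thm.6} and recover $UCF$, in agreement with Corollary~\ref{Cor.5}. I expect no essential obstacle to remain: all of the analytic work---identifying the associated cochain systems, invoking the Universal Coefficient sequence of~\cite{BM3}, and exploiting the vanishing of $\Ext(-;G)$ and of the derived limits $\varprojlim^{(i)}$ for injective $G$---has been carried out in the preceding results, so the corollary only needs the logical chain Corollary~\ref{Cor.5}${}\Rightarrow UCF_{\mathcal{K}^2_{Pol}}$, Theorem~\ref{thm.4}${}\Rightarrow CEH$, and Theorem~\ref{thm.5}${}\Rightarrow CIG$ to be stated explicitly.
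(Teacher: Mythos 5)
Your proposal is correct and follows exactly the route the paper intends: Corollary~\ref{Cor.7} is stated without proof precisely because it is the assembly of Theorem~\ref{thm.4} (giving $CEH$), Corollary~\ref{Cor.5} (giving $UCF$, hence $UCF_{\mathcal{K}^2_{Pol}}$ since polyhedral pairs are closed $P$-pairs), and Theorem~\ref{thm.5} applied to the exact homological functor of Corollary~\ref{Cor.3} (giving $CIG$). Your explicit logical chain, including the consistency check via Theorem~\ref{thm.6}, matches the paper's implicit argument.
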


\section{Uniqueness Theorem}

In this section we will follow the approach developed in the paper \cite{In1} to obtain the uniqueness theorem for an exact bifunctor homology theory with the $CIG$ property. We will see that the Alexander-Spanier normal homology theory $\bar{H}^N_*(-,-;G)$ is a bifunctor. On the other hand, by Theorem \ref{thm.4} and \ref{thm.5} it has $CIG$ property and so, we obtain the axiomatic characterization of $\bar{H}^N_*(-,-;G)$ on the category of pairs of general topological spaces $\mathcal{K}^2_{Top}.$

An exact homology functor $H_*(-,-;G)$ defined on the category $\mathcal{K}^2_{Top}$ is said to be a bifunctor \cite{In1}, if for each pair $(X,A) \in \mathcal{K}^2_{Top}$ and a short exact sequence
\begin{equation}\label{eq76}
0 \to G \os{\varphi}{\lra} G_1 \os{\psi}{\lra} G_2 \to 0,
\end{equation} 
there is the functorial natural long exact sequence:
\begin{equation}\label{eq77}
\dots \os{\psi _*}{\lra} {H}_{n+1}(X;G_2) \os{d_{n+1}}{\lra} {H}_n(X;G) \os{\varphi _*}{\lra} {H}_n(X;G_1) \os{\psi _*}{\lra} {H}_n(X;G_2) \os{d_n}{\lra} \dots~~.
\end{equation}
In the paper \cite{In1} Kh. Inasaridze described  an exact bifunctor homology theory using the continuity property for infinitely divisible (injective abelian)  groups on the subcategory $\mathcal{K}^2_{C}$ of compact Hausdorff pairs. In particular, it is proved that  there exists one and only one exact bifunctor homology theory on the category $\mathcal{K}^2_{C}$ of compact Hausdorff pairs with coefficients in the category of abelian groups (up to natural equivalence) which satisfies the axioms of homotopy, excision, dimension, and continuity for every infinitely divisible (injective abelian) group (see Theorem 1 in \cite{In1}).

\begin{lemma}\label{Lem.3} Each cochain complex $C^*$ and a short exact sequence of abelian groups
	\begin{equation}\label{eq78}
	0 \to G \os{\varphi}{\lra} G_1 \os{\psi}{\lra} G_2 \to 0
	\end{equation} 
induces a short exact sequence of chain complexes:
	\begin{equation}\label{eq79}
    0 \to C_*(\beta_{\#}) \os{\varphi}{\lra} C_*(\beta_{1 \#}) \os{\psi}{\lra} C_*(\beta_{2\#})\to 0.
    \end{equation}
\end{lemma}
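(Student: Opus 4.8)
The plan is to realize all three cones over a single compatible system of injective resolutions, so that exactness can be read off degreewise. Since $\bZ$ is hereditary, every abelian group has an injective resolution of length one, and the dual horseshoe lemma applies to the given sequence $0 \to G \os{\varphi}{\lra} G_1 \os{\psi}{\lra} G_2 \to 0$: starting from injective resolutions $0 \to G \to G' \os{\beta}{\lra} G'' \to 0$ and $0 \to G_2 \to G_2' \os{\beta_2}{\lra} G_2'' \to 0$, one obtains an injective resolution of $G_1$ with $G_1' = G' \oplus G_2'$ and $G_1'' = G'' \oplus G_2''$, the map $\beta_1$ being upper triangular, fitting into a commutative diagram whose rows $0 \to G' \os{\iota'}{\lra} G_1' \os{\pi'}{\lra} G_2' \to 0$ and $0 \to G'' \os{\iota''}{\lra} G_1'' \os{\pi''}{\lra} G_2'' \to 0$ are split exact and which satisfies $\beta_1 \iota' = \iota'' \beta$ and $\pi'' \beta_1 = \beta_2 \pi'$. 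Here $\iota', \iota''$ (resp. $\pi', \pi''$) are the components of the map of resolutions lifting $\varphi$ (resp. $\psi$).

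I would then apply $\Hom(C^n;-)$ and $\Hom(C^{n+1};-)$ to these two split rows. A covariant $\Hom$ functor preserves the direct-sum splittings $G_1' = G' \oplus G_2'$ and $G_1'' = G'' \oplus G_2''$, so in each cochain degree it returns split short exact sequences. Summing the two coordinates according to the identification $C_n(\beta_{\#}) \simeq \Hom(C^n;G') \oplus \Hom(C^{n+1};G'')$ of \eqref{eq20}, and likewise for $\beta_1$ and $\beta_2$, produces for every $n$ a short exact sequence of abelian groups
\[
0 \to C_n(\beta_{\#}) \os{\varphi}{\lra} C_n(\beta_{1 \#}) \os{\psi}{\lra} C_n(\beta_{2 \#}) \to 0,
\]
with $\varphi(\varphi',\varphi'') = (\iota'\circ\varphi', \iota''\circ\varphi'')$ and $\psi(\varphi',\varphi'') = (\pi'\circ\varphi', \pi''\circ\varphi'')$; exactness at each spot is immediate from the splitting.

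It then remains to verify that $\varphi$ and $\psi$ are chain maps for the cone differential $\partial$ of \eqref{eq21}. Comparing $\partial \circ \varphi$ with $\varphi \circ \partial$ on a pair $(\varphi',\varphi'')$, the first coordinates coincide automatically, while the two second coordinates differ only in the terms $\beta_1\circ\iota'\circ\varphi'$ and $\iota''\circ\beta\circ\varphi'$; these agree exactly because $\beta_1 \iota' = \iota'' \beta$, and the dual identity $\pi'' \beta_1 = \beta_2 \pi'$ settles $\psi$. Hence \eqref{eq79} is a short exact sequence of chain complexes. I expect the only real content to lie in the first step: the compatibility $\beta_1 \iota' = \iota'' \beta$ of the resolutions is precisely what turns the degreewise coefficient maps into chain maps, while the splitting of the rows is what yields surjectivity. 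Without a horseshoe-type compatible choice one would obtain a sequence exact only up to chain homotopy, which would be insufficient for deriving the bifunctor long exact sequence \eqref{eq77}.
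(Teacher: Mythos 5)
Your proof is correct and takes essentially the same route as the paper: the paper's diagram \eqref{eq80} is precisely an injective resolution of the coefficient sequence \eqref{eq78}, whose existence is what your appeal to the dual horseshoe lemma supplies, and both arguments then apply $\Hom(C^*;-)$ and pass to the cones. Your write-up merely makes explicit two points the paper leaves implicit, namely the split middle terms $G_1'\simeq G'\oplus G_2'$ and $G_1''\simeq G''\oplus G_2''$ that guarantee exactness of the columns after applying $\Hom$ in \eqref{eq81}, and the compatibility identities $\beta_1\iota'=\iota''\beta$, $\pi''\beta_1=\beta_2\pi'$ that make the induced maps of cones genuine chain maps.
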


\begin{proof} Consider an injective resolution of the exact sequence \eqref{eq78}:
	
\begin{equation}\label{eq80}
\begin{tikzpicture}

\node (A) {0};
\node (B) [node distance=1cm, right of=A] {$G$};
\node (C) [node distance=1cm, right of=B] {$G'$};
\node (D) [node distance=1cm, right of=C] {$G''$};
\node (E) [node distance=1cm, right of=D] {$0$};

\node (B0) [node distance=1cm, above of=B] {$0$};
\node (C0) [node distance=1cm, above of=C] {$0$};
\node (D0) [node distance=1cm, above of=D] {$0$};

\draw[->] (B0) to node [above]{}(B);
\draw[->] (C0) to node [above]{}(C);
\draw[->] (D0) to node [above]{}(D);

\draw[->] (A) to node [above]{}(B);
\draw[->] (B) to node [above]{$\alpha$}(C);
\draw[->] (C) to node [above]{$\beta$}(D);
\draw[->] (D) to node [above]{}(E);

\node (A1) [node distance=1cm, below of=A] {$0$};
\node (B1) [node distance=1cm, below of=B] {$G_1$};
\node (C1) [node distance=1cm, below of=C] {$G'_1$};
\node (D1) [node distance=1cm, below of=D] {$G''_1$};
\node (E1) [node distance=1cm, below of=E] {$0$};

\draw[->] (B) to node [right]{$\varphi$}(B1);
\draw[->] (C) to node [right]{$\varphi '$}(C1);
\draw[->] (D) to node [right]{$\varphi ''$}(D1);

\draw[->] (A1) to node [above]{}(B1);
\draw[->] (B1) to node [above]{$\alpha_1$}(C1);
\draw[->] (C1) to node [above]{$\beta_1$}(D1);
\draw[->] (D1) to node [above]{}(E1);

\node (A2) [node distance=1cm, below of=A1] {$0$};
\node (B2) [node distance=1cm, below of=B1] {$G_2$};
\node (C2) [node distance=1cm, below of=C1] {$G'_2$};
\node (D2) [node distance=1cm, below of=D1] {$G''_2$};
\node (E2) [node distance=1cm, below of=E1] {$0~,$};

\draw[->] (B1) to node [right]{$\psi$}(B2);
\draw[->] (C1) to node [right]{$\psi '$}(C2);
\draw[->] (D1) to node [right]{$\psi ''$ }(D2);

\draw[->] (A2) to node [above]{}(B2);
\draw[->] (B2) to node [above]{$\alpha_2$}(C2);
\draw[->] (C2) to node [above]{$\beta_2$}(D2);
\draw[->] (D2) to node [above]{}(E2);

\node (B3) [node distance=1cm, below of=B2] {$0$};
\node (C3) [node distance=1cm, below of=C2] {$0$};
\node (D3) [node distance=1cm, below of=D2] {$0$};

\draw[->] (B2) to node [right]{}(B3);
\draw[->] (C2) to node [right]{}(C3);
\draw[->] (D2) to node [right]{ }(D3);
\end{tikzpicture}
\end{equation}
which induces the mapping of exact sequences of chain complexes:

\begin{equation}\label{eq81}
\begin{tikzpicture}

\node (B)  {$\Hom\left(C^*;G'\right)$};
\node (C) [node distance=3cm, right of=B] {$\Hom\left(C^*;G''\right)$};

\node (B0) [node distance=1cm, above of=B] {$0$};
\node (C0) [node distance=1cm, above of=C] {$0$};

\draw[->] (B0) to node [above]{}(B);
\draw[->] (C0) to node [above]{}(C);

\draw[->] (B) to node [above]{$\beta_{\#}$}(C);

\node (B1) [node distance=1cm, below of=B] {$\Hom\left(C^*;G'_1\right)$};
\node (C1) [node distance=1cm, below of=C] {$\Hom\left(C^*;G''_1\right)$};

\draw[->] (B) to node [right]{$\varphi '$}(B1);
\draw[->] (C) to node [right]{$\varphi ''$}(C1);

\draw[->] (B1) to node [above]{$\beta_{1\#}$}(C1);

\node (B2) [node distance=1cm, below of=B1] {$\Hom\left(C^*;G'_2\right)$};
\node (C2) [node distance=1cm, below of=C1] {$\Hom\left(C^*;G''_2\right)$.};

\draw[->] (B1) to node [right]{$\psi '$}(B2);
\draw[->] (C1) to node [right]{$\psi '' $}(C2);

\draw[->] (B2) to node [above]{$\beta_{2\#}$}(C2);

\node (B3) [node distance=1cm, below of=B2] {$0$};
\node (C3) [node distance=1cm, below of=C2] {$0$};
\node (D3) [node distance=1cm, below of=D2] {$0$};

\draw[->] (B2) to node [right]{}(B3);
\draw[->] (C2) to node [right]{}(C3);
\draw[->] (D2) to node [right]{ }(D3);
\end{tikzpicture}
\end{equation}
Consequently, we have the exact sequence of cones of chain maps $\beta_{\#}$, $\beta_{1\#}$ and $\beta_{2\#}$ that is the sequence \eqref{eq79}.
\end{proof}

\begin{theorem}\label{thm.7}
There exists one and only one exact bifunctor homology theory (up to natural equivalence) on the category $\mathcal{K}^2_{Top},$ which satisfies the Eilenberg-Steenrod axioms on the subcategory  $\mathcal{K}^2_{Pol}$, and has $UCF_{\mathcal{K}^2_{Pol}}$ and  $CIG$ properties.
\end{theorem}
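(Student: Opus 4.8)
The plan is to prove existence and uniqueness separately, since they require different techniques.

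For existence, I would simply invoke the work already done in the paper. The Alexander-Spanier normal homology theory $\bar{H}^N_*(-,-;G)$ was constructed in Section 2, and by Corollary \ref{Cor.3} it is exact, by Corollary \ref{Cor.4} it satisfies the homotopy axiom, by Theorem \ref{thm.3} it satisfies the dimension axiom, and by Theorem \ref{thm.2} it is invariant under relative homeomorphism; on the polyhedral subcategory $\mathcal{K}^2_{Pol}$ these combine to give all the Eilenberg-Steenrod axioms. By Corollary \ref{Cor.5} it has the $UCF_{\mathcal{K}^2_{Pol}}$ property, and by Corollary \ref{Cor.7} (equivalently, Theorems \ref{thm.4} and \ref{thm.5}) it has the $CIG$ property. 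Finally, Lemma \ref{Lem.3} shows that the cone construction $C_*(\beta_\#)$ converts the injective resolution of a short exact sequence of coefficient groups into a short exact sequence of chain complexes, whose long exact homology sequence \eqref{eq77} exhibits $\bar{H}^N_*$ as a bifunctor. Thus $\bar{H}^N_*(-,-;G)$ is an exact bifunctor homology theory with all the required properties, establishing existence.

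For uniqueness, suppose $H_*$ and $H'_*$ are two such theories. The strategy is to produce a natural transformation between them and then show it is an isomorphism via Corollary \ref{Cor.6}. First I would note that $CIG$ implies $CEH$: indeed, if a theory is continuous for injective coefficient groups, then applying the $CIG$ isomorphism together with the $UCF_{\mathcal{K}^2_{Pol}}$ sequence and the injective resolution of an arbitrary coefficient group (running the argument of Theorem \ref{thm.6} in reverse, or more directly invoking the derived-limit machinery from Lemma 5.5 of \cite{BM3}) yields the full $CEH$ long exact sequence \eqref{eq44} for an arbitrary group. Hence both $H_*$ and $H'_*$ satisfy $CEH$, and Corollary \ref{Cor.6} becomes applicable: any natural transformation $T:H_* \to H'_*$ that is an isomorphism on the one-point space is automatically an isomorphism on every closed $P$-pair.

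It remains to construct such a $T$ and check it is an isomorphism on a point. Since both theories satisfy the Eilenberg-Steenrod axioms on $\mathcal{K}^2_{Pol}$ and agree with the dimension axiom, the classical uniqueness theorem gives a canonical natural equivalence $H_* \simeq H'_*$ on the polyhedral subcategory; normalizing both theories so that the dimension coefficient is $G$ makes this agreement an isomorphism on the one-point space. By the $CEH$ (or $CIG$) continuity, each theory is determined on $\mathcal{K}^2_{Top}$ by its restriction to polyhedra via a resolution, and the comparison of the two $CEH$ long exact sequences \eqref{eq52} induced by the polyhedral equivalence produces the required natural transformation $T$ globally; the five lemma applied levelwise to the derived-limit terms, or directly Corollary \ref{Cor.6}, then forces $T$ to be an isomorphism everywhere. \textbf{The main obstacle} I anticipate is the careful verification that the natural transformation defined on polyhedra extends compatibly along resolutions — one must check that the comparison maps commute with the $\varprojlim^{(i)}$ terms and are independent of the chosen resolution, which is exactly the content guaranteed by the bifunctor structure (Lemma \ref{Lem.3}) and the functoriality built into the $CEH$ sequences.
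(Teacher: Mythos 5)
Your existence half is fine and agrees with the paper: the required properties of $\bar{H}^N_*(-,-;G)$ are exactly Corollaries \ref{Cor.3}, \ref{Cor.4}, \ref{Cor.5}, \ref{Cor.7}, Theorems \ref{thm.2}, \ref{thm.3}, and the bifunctor structure from Lemma \ref{Lem.3}. The uniqueness half, however, contains two genuine gaps. First, your pivotal claim that $CIG$ implies $CEH$ is unsupported. Theorem \ref{thm.6} run ``in reverse'' yields $UCF$ for all closed $P$-pairs, not the $CEH$ long exact sequence; and Lemma 5.5 of \cite{BM3} cannot be ``directly invoked'' for an abstract axiomatic theory $H_*$, because that lemma applies only to homologies \emph{generated by cochain complexes} via the cone construction $\Hom(C^*;\beta_\#)$ — an abstract theory satisfying the axioms of Theorem \ref{thm.7} comes with no such chain-level model. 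The paper only ever proves the opposite implication (Theorem \ref{thm.5}: $UCF_{\mathcal{K}^2_{Pol}}+CEH \Rightarrow CIG$); deriving $CEH$ for an abstract theory from $CIG$ is essentially as hard as the uniqueness theorem itself, so assuming it is close to circular.

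Second, even if both theories satisfied $CEH$, Corollary \ref{Cor.6} takes as \emph{input} a natural transformation $T:H_*\to H'_*$ compatible with the derived-limit sequences; it cannot manufacture one. Your proposed construction — ``comparison of the two $CEH$ long exact sequences produces the required natural transformation $T$ globally'' — does not work: the outer terms $\varprojlim^{(i)}H_{n+k}(X_\alpha;G)$ can be matched using the polyhedral equivalence, but the middle terms $H_n(X;G)$ and $H'_n(X;G)$ are only determined by those sequences up to extension, and there is no canonical map between middle terms of two extensions; the five lemma verifies a given map is an isomorphism but cannot create it. The paper itself flags this after Corollary \ref{Cor.6}: such transformations are obtained from \emph{cochain-level} maps, which abstract theories lack. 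This is precisely why the paper's proof of Theorem \ref{thm.7} takes a different route: it uses the genuinely topological canonical map $p:X\to\mathcal{N}(X)=\varprojlim X_\lambda$ (limit of nerves of normal coverings), shows $p_*:H_*(X;G')\to H_*(\mathcal{N}(X);G')$ is an isomorphism for injective $G'$ using $CIG$, upgrades this to arbitrary $G$ by the bifunctor sequence and the five lemma (legitimate here, since $p_*$ exists a priori), and then computes $H_*(\mathcal{N}(X);G)$ by a skeletal-filtration argument: the filtration $\mathcal{N}^0(X)\subset\mathcal{N}^1(X)\subset\cdots$ yields a chain complex whose terms $\Hom\bigl(\varinjlim \bar{H}^n_N(X^n_\lambda,X^{n-1}_\lambda;\mathbb{Z});G'\bigr)$ depend only on Alexander--Spanier normal cohomology, and whose cone computes $H_*(\mathcal{N}(X);G)$ — hence the answer is independent of the theory. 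All comparison maps there are induced by actual inclusions of subspaces, which is what your approach is missing.
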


\begin{proof} Our aim is to show that any homology theory $H_*(-,-;G)$ which satisfies the conditions of the Theorem is isomorphic to the Alexander-Sapanier normal homology theory $\bar{H}^N_*(-,-;G).$

Let ${\bf \tilde{p}}:X \to {\bf \tilde{X}}=\left\{\tilde{X}_{\lambda }, p_{\tilde{\lambda} \tilde{\lambda}' }, \tilde{\lambda} \in \tilde{\Lambda} \right\} $  be a polyhedral resolution defined by normal covering of $X$ as in \cite{MS}. Note that in the paper \cite{MS} instead of ${\bf \tilde{p}}=(\tilde{p}_{\tilde{\lambda}}) : X \to {\bf \tilde{X}}$ is used ${\bf p^*}=(p^*_{\tilde{\lambda}}) : X \to {\bf X^*}$ notation. In particular, let $\Lambda$ be the set of all finite subsets $\lambda=\{\alpha_1, \alpha_2, \dots , \alpha_n\}$ consisting of different normal coverings of $X$. It is clear that $\Lambda$ is ordered by inclusion, i.e. $\lambda \le \lambda ' \iff \lambda \subset \lambda '.$ For each $\lambda \in \Lambda$ denote by $\alpha_\lambda$ the normal covering $\alpha_1 \wedge \alpha_2 \wedge \dots \wedge \alpha_n=\{ U_{\alpha_1} \bigcap U_{\alpha_2} \bigcap \dots \bigcap U_{\alpha_n} | U_{\alpha_i}\in \alpha_i, i=1,2, \dots ,n\}.$ Denote by $N_\lambda$ the nerve $N(\alpha_\lambda)$ of covering $\alpha_\lambda$.  Let $X_\lambda=|N_\lambda|$ be the geometric realization with CW-topology of $N_\lambda$. In this case, if $\lambda \le \lambda '$, then there is a uniquely defined function $p_{\lambda \lambda '}$, which maps the vertex  $U_{\alpha_1} \bigcap U_{\alpha_2} \bigcap \dots \bigcap U_{\alpha_n} \bigcap \dots \bigcap U_{\alpha_{n'}}$ of $N_{\lambda '}$ to the vertex $U_{\alpha_1} \bigcap U_{\alpha_2} \bigcap \dots \bigcap U_{\alpha_n}$ of $N_\lambda ,$ which itself induces a simplicial mapping $p_{\lambda \lambda '}:X_{\lambda '} \to X_{\lambda}.$ It is known that ${\bf X}=\left\{X_{\lambda }, p_{\lambda \lambda' }, \lambda \in \Lambda \right\}$ is an inverse system. Moreover, for each $\lambda \in \Lambda$ there is the canonical mapping $p_\lambda :X \to X_\lambda $ defined by the corresponding  partition of the unity $\Phi_\lambda=\{ \varphi_{\alpha_\lambda}\}$ of the covering $\alpha_\lambda ,$ such that
\begin{equation}\label{eq82}
p_{\lambda \lambda '}p_{ \lambda '}=p_{\lambda }, ~~~\lambda \le \lambda ' .
\end{equation}
Consequently, ${\bf p}=(p_\lambda):X \to {\bf X}$ is a mapping of the system. For each $\lambda \in \Lambda$, let $\mathcal{U}_\lambda=\left\{ U_{(\lambda, \mu)}|\mu \in \mathcal{M} \right\}$ be the system of all open neighborhoods of closure $\overline{p_\lambda\left(X\right)}$ in $X_\lambda$. Let $\tilde{\lambda}=(\lambda, \mu)$ and $\tilde{\Lambda}=\left\{ \tilde{\lambda}=(\lambda, \mu)| \lambda \in \Lambda , \mu \in \mathcal{M}\right\}.$ In this case, $\tilde{\lambda} \le^* \tilde{\lambda}'$ if  $\lambda \le \lambda'$ and $p_{\lambda \lambda '} \left(U'_{(\lambda', \mu')} \right) \subset U_{(\lambda, \mu)}.$ Let $\tilde{X}_{\tilde{\lambda}}=U_{(\lambda, \mu)}$ and $\tilde{p}_{\lambda^*}:X \to X^*_{\lambda^*}$ be the mapping $p_\lambda : X \to U_{(\lambda, \mu)} \subset X_\lambda$. Consequently, denote by $\tilde{p}_{\tilde{\lambda} \tilde{\lambda} '}:\tilde{X}_{\tilde{\lambda} '} \to X^*_{\lambda^*}$ the mapping $p_{\lambda \lambda ' |U'_{(\lambda', \mu')}}:U'_{(\lambda', \mu')} \to U_{(\lambda, \mu)}. $ It is known that  ${\bf \tilde{p}}=(\tilde{p}_{\tilde{\lambda}}) : X \to {\bf \tilde{X}}$ is a polyhedral resolution of $X$ \cite{MS}. Let ${\bf i}=\left\{1_{X_\lambda},i\right\}:{\bf X} \to {\bf \tilde{X}}$ be the mapping, where $i:\tilde{\Lambda} \to \Lambda$ is given by $i(\tilde{\lambda})=i\left(\lambda,\mu\right)=\lambda, ~~~\forall \tilde{\lambda} \in \tilde{\Lambda}$.  Consider the direct systems $\left\{\bar{C}^*_N\left(X_{\lambda }\right), p^{\#}_{\lambda \lambda' }, \lambda \in \Lambda \right\}$, $\left\{\bar{C}^*_N\left(\tilde{X}_{\tilde{\lambda} }\right), p^{\#}_{\tilde{\lambda} \tilde{\lambda}' }, \tilde{\lambda} \in \tilde{\Lambda} \right\}$ and the mapping between the corresponding limit groups
\begin{equation}\label{eq83}
{\bf i}^\#=\varinjlim {i^\#_\lambda}: \varinjlim \bar{C}^*_N\left(\tilde{X}_{\tilde{\lambda} }\right) \to \varinjlim \bar{C}^*_N\left(X_{\lambda }\right).
\end{equation}
Our aim is to show that \eqref{eq83} induces an isomorphism between the corresponding cohomology groups. Consider the following composition
\begin{equation}\label{eq84}
\varinjlim \bar{H}^*_N\left(\tilde{X}_{\tilde{\lambda} };G\right) \os{{\bf i}^*}{\lra} \varinjlim \bar{H}^*_N\left(X_{\lambda };G\right) \os{{\bf p}^*}{\lra} \bar{H}^*_N(X;G).
\end{equation}
It is clear that ${\bf \tilde{p}}={\bf i} \circ {\bf p}:X \to {\bf \tilde{X}}$ and so, the composition ${\bf \tilde{p}}^*={\bf p}^* \circ {\bf {i}}^*: \varinjlim \bar{H}^*_N\left(\tilde{X}_{\tilde{\lambda} };G\right) \to \bar{H}^*_N(X;G)$
is an isomorphism by Corollary  \ref{Cor.2'}. On the other hand, by Corollary \ref{Cor.2} and Theorem 3.2 \cite{Mor2} we have an isomorphism: 
\begin{equation}\label{eq85}
{\bf p}^*: \varinjlim \bar{H}^*_N\left(X_{\lambda };G\right) \to \bar{H}^*_N(X;G),
\end{equation}
 and therefore, ${\bf i}^*: \varinjlim \bar{H}^*_N\left(\tilde{X}_{\tilde{\lambda} };G\right) \to \varinjlim \bar{H}^*_N\left(X_{\lambda };G\right)$ must be an isomorphism as well. Therefore, by Lemma 5.5 \cite{BM3} to study the homology groups $\bar{H}^N_*(X;G)$, instead of the resolution  ${\bf \tilde{p}}=(\tilde{p}_\lambda):X \to {\bf \tilde{X}}=\left\{\tilde{X}_{\lambda }, p_{\tilde{\lambda} \tilde{\lambda}' }, \tilde{\lambda} \in \tilde{\Lambda} \right\},$ it is sufficient to consider the inverse system ${\bf X}=\left\{X_{\lambda }, p_{\lambda \lambda' }, \lambda \in \Lambda \right\}$ and the corresponding mapping  ${\bf p}=(p_\lambda):X \to {\bf X}=\left\{X_{\lambda }, p_{\lambda \lambda' }, \lambda \in \Lambda \right\}$.  

Let $\mathcal{N}(X)=\varprojlim \left\{X_{\lambda }, p_{\lambda \lambda' }, \lambda \in \Lambda \right\}$, then the mapping ${\bf p}:X \to \left\{X_{\lambda }, p_{\lambda \lambda' }, \lambda \in \Lambda \right\}$ induces a canonical map $p:X \to \mathcal{N}(X)$, i.e.,
\begin{equation}\label{eq86}
p_\lambda = p_\lambda^{\mathcal{N}(X)} \circ p, ~~~\forall ~\lambda \in\Lambda,
\end{equation}
where $ p_\lambda^{\mathcal{N}(X)}:\mathcal{N}(X) \to X_\lambda $ is a canonical projection. Consider the mapping ${\bf p}^{\mathcal{N}(X)}=\left( p_\lambda^{\mathcal{N}(X)}\right):\mathcal{N}(X) \to {\bf X}=\left\{X_\lambda, p_{\lambda \lambda'}, \lambda \in \Lambda \right\}$ and let's show that
\begin{equation}\label{eq87}
\bar{H}^*_N\left(X;G\right) \simeq  \bar{H}^*_N\left(\mathcal{N}(X);G\right).
\end{equation}
For this aim, consider the limit space $\tilde{X}= \varprojlim \left\{\tilde{X}_{\lambda }, p_{\tilde{\lambda} \tilde{\lambda}' }, \tilde{\lambda} \in \tilde{\Lambda} \right\}$ and the corresponding mapping  ${\bf \tilde{p}}^{\tilde{X}}=(\tilde{p}^{\tilde{X}}_\lambda):\tilde{X} \to {\bf \tilde{X}}=\left\{\tilde{X}_{\lambda }, p_{\tilde{\lambda} \tilde{\lambda}' }, \tilde{\lambda} \in \tilde{\Lambda} \right\}.$ Note that all terms $\tilde{X}_{\lambda }$ are polyheron and so, topologically complete. Therefore, by Theorem 6.5 and 6.16 \cite{Mar},  the mapping ${\bf \tilde{p}}^{\tilde{X}}$ is a resolution. Hence, by Corollary 3 we have the isomorphisms:
\begin{equation}\label{eq88}
\bar{H}^*_N(X;G) \simeq \varinjlim  \bar{H}^*_N (\tilde{X}_\lambda ;G),
\end{equation}
\begin{equation}\label{eq89}
\bar{H}^*_N(\tilde{X};G) \simeq \varinjlim  \bar{H}^*_N (\tilde{X}_\lambda ;G).
\end{equation}
Therefore, there is a canonical isomorphism
\begin{equation}\label{eq90}
\bar{H}^*_N(X;G) \simeq \bar{H}^*_N(\tilde{X};G).
\end{equation} 
Consider the maps $\tilde{p}:X \to \tilde{X}$, $p:X \to \mathcal{N}(X)$, $i:\mathcal{N}(X) \to \tilde{X}$ and $p^{\tilde{X}}:\tilde{X} \to \mathcal{N}(X)$ induced by  ${\bf \tilde{p}}=(\tilde{p}_\lambda):X \to {\bf \tilde{X}}=\left\{\tilde{X}_{\lambda }, p_{\tilde{\lambda} \tilde{\lambda}' }, \tilde{\lambda} \in \tilde{\Lambda} \right\},$  ${\bf p}=(p_\lambda):X \to {\bf X}=\left\{X_{\lambda }, p_{\lambda \lambda' }, \lambda \in \Lambda \right\},$ ${\bf i}=\left\{1_{X_\lambda},i\right\}:\left\{X_{\lambda }, p_{\lambda \lambda' }, \lambda \in \Lambda \right\} \to \left\{\tilde{X}_{\tilde{\lambda} }, p_{\tilde{\lambda} \tilde{\lambda}' }, \tilde{\lambda} \in \tilde{\Lambda} \right\}$ and ${\bf p}^{\tilde{X}}=(p_\lambda):\tilde{X} \to {\bf X}=\left\{X_{\lambda }, p_{\lambda \lambda' }, \lambda \in \Lambda \right\}.$ In this case $\tilde{p}=i \circ p$ and $p^{\tilde{X}} \circ i =1_{\mathcal{N}(X)}$, which induce the isomorphisms:
\begin{equation}\label{eq91}
\bar{H}^*_N(\tilde{X};G) \os{i^*}{\lra} \bar{H}^*_N(\mathcal{N}(X);G) \os{p^*}{\lra} \bar{H}^*_N(X;G),
\end{equation}
\begin{equation}\label{eq92}
\bar{H}^*_N\left(\mathcal{N}(X);G\right) \os{\left(p^{\tilde{X}} \right)^*}{\lra} \bar{H}^*_N\left(\tilde{X};G\right) \os{i^*}{\lra} \bar{H}^*_N\left(\mathcal{N}(X);G\right).
\end{equation}
By \eqref{eq90}, the composition $p^* \circ i^* : \bar{H}^*_N\left(\tilde{X};G\right) \to \bar{H}^*_N\left(X;G\right)$ is an isomorphism. On the other hand $i^* \circ \left(p^{\tilde{X}} \right)^*$ is the identity. Therefore, $i^*:\bar{H}^*_N\left(\tilde{X};G\right) \to \bar{H}^*_N\left(\mathcal{N}(X);G\right)$ is an isomorphism and by \eqref{eq90},  the isomorphism \eqref{eq87} is fulfilled. Therefore, by \eqref{eq85} and \eqref{eq87} we have:
\begin{equation}\label{eq93}
\bar{H}^*_N(\mathcal{N}(X);G) \simeq \varinjlim \bar{H}^*_N ({X}_\lambda ;G).
\end{equation}

By Theorem 6 for each topological space $X$, there is the Universal Coefficient Formula:
\begin{equation}\label{eq94}
\begin{tikzpicture}
\node (A) {$0$};
\node (B) [node distance=2.5cm, right of=A] {$\Ext \left(\bar{H}^{n+1}_N \left(\mathcal{N}\left(X\right);\mathbb{Z}\right);G\right) $};
\node (C) [node distance=3.5cm, right of=B] {${H}_n\left(\mathcal{N}\left(X\right);G\right)$};
\node (D) [node distance=3.5cm, right of=C] {$\Hom \left(\bar{H}^n_N\left(\mathcal{N}\left(X\right);\mathbb{Z} \right);G\right)$};
\node (E) [node distance=2.7cm, right of=D] {$0~.$};

\draw[->] (A) to node [above]{}(B);
\draw[->] (B) to node [above]{}(C);
\draw[->] (C) to node [above]{}(D);
\draw[->] (D) to node [above]{}(E);
\end{tikzpicture}
\end{equation}
For each $\lambda \in \Lambda$ consider the diagram induced by the natural projection $p_\lambda^{\mathcal{N}(X)} : \mathcal{N}(X) \to X_\lambda$:
\begin{equation}\label{eq95}
\begin{tikzpicture}

\node (A) {$0$};
\node (B) [node distance=2.5cm, right of=A] {$\Ext \left(\bar{H}^{n+1}_N \left(X_\lambda;\mathbb{Z}\right);G\right) $};
\node (C) [node distance=3.5cm, right of=B] {${H}_n\left(X_\lambda;G\right)$};
\node (D) [node distance=3.5cm, right of=C] {$\Hom \left(\bar{H}^n_N\left(X_\lambda;\mathbb{Z}\right);G \right)$};
\node (E) [node distance=2.7cm, right of=D] {$0~,$};

\draw[->] (A) to node [above]{}(B);
\draw[->] (B) to node [above]{}(C);
\draw[->] (C) to node [above]{}(D);
\draw[->] (D) to node [above]{}(E);

\node (A1) [node distance=1.5cm, above of=A] {0};
\node (B1) [node distance=1.5cm, above of=B] {$\Ext \left(\bar{H}^{n+1}_N \left(\mathcal{N}\left(X\right);\mathbb{Z}\right);G\right) $};
\node (C1) [node distance=1.5cm, above of=C] {${H}_n\left(\mathcal{N}\left(X\right);G\right)$};
\node (D1) [node distance=1.5cm, above of=D] {$\Hom \left(\bar{H}^n_N\left(\mathcal{N}\left(X\right);\mathbb{Z} \right);G\right)$};
\node (E1) [node distance=1.5cm, above of=E] {0};

\draw[->] (A1) to node [above]{}(B1);
\draw[->] (B1) to node [above]{}(C1);
\draw[->] (C1) to node [above]{}(D1);
\draw[->] (D1) to node [above]{}(E1);

\draw[<-] (B) to node [right]{$\Ext\left(\left(p_\lambda^{\mathcal{N}(X)}\right)^*;G\right)$}(B1);
\draw[<-] (C) to node [right]{$\left(p_\lambda^{\mathcal{N}(X)}\right)_*$}(C1);
\draw[<-] (D) to node [right]{$\Hom\left(\left(p_\lambda^{\mathcal{N}(X)}\right)^*;G\right)$}(D1);
\end{tikzpicture}
\end{equation}
which induces the following commutative diagram:
\begin{equation}\label{eq96}
\begin{tikzpicture}

\node (A) {$0$};
\node (B) [node distance=2.5cm, right of=A] {$\varprojlim \Ext \left(\bar{H}^{n+1}_N \left(X_\lambda;\mathbb{Z}\right);G\right) $};
\node (C) [node distance=3.5cm, right of=B] {$\varprojlim {H}_n\left(X_\lambda;G\right)$};
\node (D) [node distance=3.5cm, right of=C] {$\varprojlim \Hom \left(\bar{H}^n_N\left(X_\lambda;\mathbb{Z}\right);G \right)$};
\node (E) [node distance=2.5cm, right of=D] {};
\node (F) [node distance=2cm, right of=E] {${\varprojlim}^1 \Ext \left(\bar{H}^{n+1}_N \left(X_\lambda;\mathbb{Z}\right);G\right) $};
\node (G) [node distance=3cm, right of=F] {$\dots~.$};

\draw[->] (A) to node [above]{}(B);
\draw[->] (B) to node [above]{}(C);
\draw[->] (C) to node [above]{}(D);
\draw[->] (D) to node [above]{}(F);
\draw[->] (F) to node [above]{}(G);

\node (A1) [node distance=1.5cm, above of=A] {0};
\node (B1) [node distance=1.5cm, above of=B] {$\Ext \left(\bar{H}^{n+1}_N \left(\mathcal{N}\left(X\right);\mathbb{Z}\right);G\right) $};
\node (C1) [node distance=1.5cm, above of=C] {${H}_n\left(\mathcal{N}\left(X\right);G\right)$};
\node (D1) [node distance=1.5cm, above of=D] {$\Hom \left(\bar{H}^n_N\left(\mathcal{N}\left(X\right);\mathbb{Z} \right);G\right)$};
\node (E1) [node distance=1.5cm, above of=E] {0};

\draw[->] (A1) to node [above]{}(B1);
\draw[->] (B1) to node [above]{}(C1);
\draw[->] (C1) to node [above]{}(D1);
\draw[->] (D1) to node [above]{}(E1);

\draw[<-] (B) to node [right]{$\Ext\left(\left(p_\lambda^{\mathcal{N}(X)}\right)^*;G\right)$}(B1);
\draw[<-] (C) to node [right]{$\left(p_\lambda^{\mathcal{N}(X)}\right)_*$}(C1);
\draw[<-] (D) to node [right]{$\Hom\left(\left(p_\lambda^{\mathcal{N}(X)}\right)^*;G\right)$}(D1);
\end{tikzpicture}
\end{equation}
Consequently, for an injective abelian  group $G'$ we obtain the diagram:
\begin{equation}\label{eq97}
\begin{tikzpicture}

\node (C) [node distance=3.5cm, right of=B] {$\varprojlim {H}_n\left(X_\lambda;G'\right)$};
\node (D) [node distance=3.5cm, right of=C] {$\varprojlim \Hom \left(\bar{H}^n_N\left(X_\lambda;\mathbb{Z}\right);G' \right).$};

\draw[->] (C) to node [above]{}(D);

\node (C1) [node distance=1.5cm, above of=C] {${H}_n\left(\mathcal{N}\left(X\right);G'\right)$};
\node (D1) [node distance=1.5cm, above of=D] {$\Hom \left(\bar{H}^n_N\left(\mathcal{N}\left(X\right);\mathbb{Z} \right);G\right).$};

\draw[->] (C1) to node [above]{}(D1);

\draw[<-] (C) to node [right]{$\left(p_\lambda^{\mathcal{N}(X)}\right)_*$}(C1);
\draw[<-] (D) to node [right]{$\Hom\left(\left(p_\lambda^{\mathcal{N}(X)}\right)^*;G'\right)$}(D1);
\end{tikzpicture}
\end{equation}
On the other hand, $\varprojlim \Hom \left(\bar{H}^n_N\left(X_\lambda;\mathbb{Z}\right);G' \right) \simeq  \Hom \left( \varinjlim \bar{H}^n_N\left(X_\lambda;\mathbb{Z}\right);G' \right)$ and by \eqref{eq93} and \eqref{eq97} the mapping $\left(p_\lambda^{\mathcal{N}(X)}\right)_*$ is an isomorphism:
\begin{equation}\label{eq98}
\varprojlim H_n\left(X_\lambda ;G '\right) \simeq H_n(\mathcal{N}(X);G').
\end{equation}
For each injective abelian group $G'$ we have
\begin{equation}\label{eq99}
{H}_*(X;G') \simeq \varprojlim {H}_*(X_\lambda;G').
\end{equation}
Therefore, for each injective abelian  group $G',$ the map $ p:X \to \mathcal{N}(X) $ induces an isomorphism:
\begin{equation}\label{eq100}
p_*:{H}_*(X;G') \to {H}_*(\mathcal{N}(X);G'). 
\end{equation}
Consider the following diagram induced by  $ p:X \to \mathcal{N}(X) $ and an injective resolution $0 \to G \to G' \to G'' \to 0$ of a coefficient group $G$:

\begin{equation}\label{eq101}
\begin{tikzpicture}

\node (A) {$\dots$};
\node (B) [node distance=2.2cm, right of=A] {$   {H} _{n+1}(X;G') $};
\node (C) [node distance=3.2cm, right of=B] {$ {H}_{n+1}(X;G'')$};
\node (D) [node distance=2.6cm, right of=C] {$ {H}_n(X;G)$};
\node (E) [node distance=2.5cm, right of=D] {${H} _{n}(X;G') $};
\node (F) [node distance=2.8cm, right of=E] {$ {H}_ {n}(X;G'') $};
\node (H) [node distance=2.2cm, right of=F] {$\dots$~};

\draw[->] (A) to node [above]{}(B);
\draw[->] (B) to node [above]{}(C);
\draw[->] (C) to node [above]{}(D);
\draw[->] (D) to node [above]{}(E);
\draw[->] (E) to node [above]{}(F);
\draw[->] (F) to node [above]{}(H);

\node (A) {$\dots$};
\node (A1) [node distance=1.5cm, below of=A] {$\dots$};
\node (B1) [node distance=1.5cm, below of=B] {${H} _{n+1}(\mathcal{N}(X);G') $};
\node (C1) [node distance=1.5cm, below of=C] {${H} _{n+1}(\mathcal{N}(X);G'') $};
\node (D1) [node distance=1.5cm, below of=D] {${H} _{n}(\mathcal{N}(X);G) $};
\node (E1) [node distance=1.5cm, below of=E] {${H} _{n}(\mathcal{N}(X);G') $};
\node (F1) [node distance=1.5cm, below of=F] {${H} _{n}(\mathcal{N}(X);G'') $};
\node (H1) [node distance=1.5cm, below of=H] {$\dots$~.};

\draw[->] (A1) to node [above]{}(B1);
\draw[->] (B1) to node [above]{}(C1);
\draw[->] (C1) to node [above]{}(D1);
\draw[->] (D1) to node [above]{}(E1);
\draw[->] (E1) to node [above]{}(F1);
\draw[->] (F1) to node [above]{}(H1);

\draw[->] (B) to node [right]{$\simeq$}(B1);
\draw[->] (C) to node [right]{$\simeq$}(C1);
\draw[->] (D) to node [right]{$p_*$}(D1);
\draw[->] (E) to node [right]{$\simeq$}(E1);
\draw[->] (F) to node [right]{$\simeq$}(F1);

\end{tikzpicture}
\end{equation}
Therefore, by commutative diagram \eqref{eq101} the homomorphism 
\begin{equation}\label{eq102}
p_*:{H}_*(X;G) \to {H}_*(\mathcal{N}(X);G) 
\end{equation}
is an isomorphism for any coefficient abelian group $G$. Our aim is to show  that the homology group ${H}_*(\mathcal{N}(X);G)$ does not depend on the choice of a homology $H_*(-,-;G)$.  

Consider the filtration of the space $\mathcal{N}(X)$:
\begin{equation}\label{eq103}
\mathcal{N}^0(X) \subset \mathcal{N}^1(X) \subset \dots \subset \mathcal{N}^n(X) \subset \dots ~~.
\end{equation} 
Hence,
\begin{equation}\label{eq104}
\mathcal{N}^n(X)=\varprojlim \left\{X^n_{\lambda }, p_{\lambda \lambda' }, \lambda \in \Lambda \right\},
\end{equation}
where $X^n_{\lambda }$ is the $n$-dimensional skeleton of $X_{\lambda }.$ Note that, as above, we can show that there is an isomorphism as well:
\begin{equation}\label{eq105'}
	\bar{H}^*_N(\mathcal{N}^n(X);G) \simeq \varinjlim \bar{H}^*_N ({X}^n_\lambda ;G).
\end{equation}

Let's calculate some homology groups of filtrations:
\begin{enumerate}[\bf (a)]
	\item $H_p\left(\mathcal{N}^n\left(X\right);G \right)=0, ~~\forall p>n$. By the Universal Coefficient Formula we have
	\begin{equation}\label{eq105}
	\begin{tikzpicture}
	\node (A) {$0$};
	\node (B) [node distance=2.5cm, right of=A] {$\Ext \left(\bar{H}^{p+1}_N \left(\mathcal{N}^n\left(X\right);\mathbb{Z}\right);G\right) $};
	\node (C) [node distance=3.5cm, right of=B] {${H}_p\left(\mathcal{N}^n\left(X\right);G\right)$};
	\node (D) [node distance=3.5cm, right of=C] {$\Hom \left(\bar{H}^p_N\left(\mathcal{N}^n\left(X;\mathbb{Z}\right);G \right)\right)$};
	\node (E) [node distance=2.7cm, right of=D] {$0~.$};

	\draw[->] (A) to node [above]{}(B);
	\draw[->] (B) to node [above]{}(C);
	\draw[->] (C) to node [above]{}(D);
	\draw[->] (D) to node [above]{}(E);
	\end{tikzpicture}
	\end{equation}
By \eqref{eq105'}  we obtain that
	\begin{equation}\label{eq106}
\begin{tikzpicture}
\node (A) {$0$};
\node (B) [node distance=2.5cm, right of=A] {$\Ext \left(\varinjlim\bar{H}^{p+1}_N \left(X^n_\lambda ;\mathbb{Z}\right);G\right) $};
\node (C) [node distance=3.5cm, right of=B] {${H}_p\left(\mathcal{N}^n\left(X\right);G\right)$};
\node (D) [node distance=3.5cm, right of=C] {$\Hom \left(\varinjlim \bar{H}^p_N\left(X^n_\lambda;\mathbb{Z}\right);G\right)$};
\node (E) [node distance=2.7cm, right of=D] {$0~.$};

\draw[->] (A) to node [above]{}(B);
\draw[->] (B) to node [above]{}(C);
\draw[->] (C) to node [above]{}(D);
\draw[->] (D) to node [above]{}(E);
\end{tikzpicture}
\end{equation}	
On the other hand, for polyhedron the Alexander-Spanier and singular cohomology theories are isomorphic {\color{red}\cite{Sp}} and so, if $\left\{ F^n_\alpha \right\}$ is the direct system of compact subspaces of $X^n_\lambda$, then by  Theorem 3.3 \cite{BM1}, there is a short exact sequence:
\begin{equation}\label{eq107}
\begin{tikzpicture}
\node (A) {$0$};
\node (B) [node distance=2cm, right of=A] {${\varprojlim}^1 \bar{H}^{p-1}_N\left(F^n_\alpha ;\mathbb{Z}\right)$};
\node (C) [node distance=2.5cm, right of=B] {$\bar{H}^p_N\left(X^n_\lambda;\mathbb{Z}\right)$};
\node (D) [node distance=2.5cm, right of=C] {$\varprojlim\bar{H}^p_N\left(F^n_\alpha ;\mathbb{Z}\right)$};
\node (E) [node distance=2cm, right of=D] {$0~.$};

\draw[->] (A) to node [above]{}(B);
\draw[->] (B) to node [above]{}(C);
\draw[->] (C) to node [above]{}(D);
\draw[->] (D) to node [above]{}(E);
\end{tikzpicture}
\end{equation}

$F^n_\alpha$ is a compact polyhedron and so,  there is a short exact sequence:
\begin{equation}\label{eq108}
\begin{tikzpicture}
\node (A) {$0$};
\node (B) [node distance=2.3cm, right of=A] {$\Ext \left(H_{p-1}(F^n_\alpha ;\mathbb{Z}) ;G \right) $};
\node (C) [node distance=3cm, right of=B] {$\bar{H}^p_N\left(F^n_\alpha ;G\right)$};
\node (D) [node distance=3cm, right of=C] {$\Hom \left({H}_p\left(F^n_\alpha ;\mathbb{Z}\right);G\right)$};
\node (E) [node distance=2.4cm, right of=D] {$0~.$};

\draw[->] (A) to node [above]{}(B);
\draw[->] (B) to node [above]{}(C);
\draw[->] (C) to node [above]{}(D);
\draw[->] (D) to node [above]{}(E);
\end{tikzpicture}
\end{equation}
Note that $H_p(F^n_\alpha; \mathbb{Z})=0$, if $p>n$ and $H_n(F^n_\alpha; \mathbb{Z})$ is a free abelian group. Therefore, by \eqref{eq108} we have:
\begin{equation}\label{eq109}
\bar{H}^p_N\left(F^n_\alpha ;G\right) =0, ~~~\forall p \ge n+1.
\end{equation}
Therefore, by \eqref{eq107}, \eqref{eq108} and \eqref{eq109} we have
\begin{equation}\label{eq110}
H^p_N\left(X^n_\lambda ;G\right) =0, ~~~\forall p \ge n+2.
\end{equation}
If $p=n+1,$ then by \eqref{eq107} and \eqref{eq109} there is an isomorphism
\begin{equation}\label{eq111}
\bar{H}^{n+1}_N\left(X^n_\lambda ;\mathbb{Z}\right) ={\varinjlim}^1 \bar{H}^n_N\left( F^n_\alpha  ; \mathbb{Z}\right).
\end{equation}
On the other hand, by \eqref{eq108} we have the following long exact sequence:
\begin{equation}\label{eq112}
\begin{tikzpicture}
\node (A) {$0$};
\node (B) [node distance=2.5cm, right of=A] {${\varprojlim} \Ext \left(H_{n-1}\left(F^{n}_\alpha;\mathbb{Z} \right);G \right) $};
\node (C) [node distance=4cm, right of=B] {$ {\varprojlim} \bar{H}^{n}_N\left(F^{n}_\alpha ;G \right)$};
\node (D) [node distance=4cm, right of=C] { $ {\varprojlim} \Hom \left({H}_{n}\left(F^{n}_\alpha ; \mathbb{Z}\right);G\right)$};
\node (E) [node distance=2.8cm, right of=D] {};

\draw[->] (A) to node [above]{}(B);
\draw[->] (B) to node [above]{}(C);
\draw[->] (C) to node [above]{}(D);
\draw[->] (D) to node [above]{}(E);

\node (A1) [node distance=1cm, below of=A] {};
\node (B1) [node distance=1cm, below of=B] {${\varprojlim}^1 \Ext \left(H_{n-1}\left(F^{n}_\alpha ;\mathbb{Z}\right);G\right) $};
\node (C1) [node distance=1cm, below of=C] {$ {\varprojlim}^1 \bar{H}^{n}_N\left(F^{n}_\alpha ;G\right)$};
\node (D1) [node distance=1cm, below of=D] { $ {\varprojlim}^1 \Hom \left({H}_{n}\left(F^{n}_\alpha ;\mathbb{Z} \right);G\right)$};
\node (E1) [node distance=1cm, below of=E] {$\dots$~.};

\draw[->] (A1) to node [above]{}(B1);
\draw[->] (B1) to node [above]{}(C1);
\draw[->] (C1) to node [above]{}(D1);
\draw[->] (D1) to node [above]{}(E1);
\end{tikzpicture}
\end{equation}
The group $H_{n}(F^{n}_\alpha ; \mathbb{Z})$ is a finitely generated and so, by Corollary 1.5 of \cite{HM} ${\varprojlim}^1 \Hom \left({H}_{n}\left(F^{n}_\alpha ;\mathbb{Z} \right);G\right)=0$. By Proposition1.2 and  Corollary 1.5 of \cite{HM} we have
\begin{equation}\label{eq113}
{\varprojlim}^1 \Ext \left(H_{n-1}\left(F^{n}_\alpha ; \mathbb{Z} \right);G \right) \simeq {\varprojlim}^3 \Hom \left(H_{n-1}\left(F^{n}_\alpha ; \mathbb{Z}\right);G \right) =0.
\end{equation}
Therefore,  \eqref{eq113}, \eqref{eq112}, \eqref{eq111}, \eqref{eq109} and \eqref{eq106} we obtain
\begin{equation}\label{eq114}
H_{n+1}\left(\mathcal{N}^{n}(X);G\right) \simeq 0.
\end{equation}
 
\item $H_p\left(\mathcal{N}^n(X),\mathcal{N}^{n-1}(X);G\right)=0, ~~~\forall p \ne n-1,~ n.$ Consider the long exact homological sequence of the pair $\left(\mathcal{N}^n(X),\mathcal{N}^{n-1}(X) \right)$:
\begin{equation}\label{eq115}
\dots  \to {H}_{p}(\mathcal{N}^{n-1}(X);G) \to  {H}_{p}(\mathcal{N}^n(X);G) \to  {H}_{p}(\mathcal{N}^n(X),\mathcal{N}^{n-1}(X);G) \to {H}_{p-1}(\mathcal{N}^{n-1}(X);G)  \to \dots~~.
\end{equation}
Therefore, for $p >n$ by the property (a),  ${H}_{p}(\mathcal{N}^n(X);G)=0$ and so, by \eqref{eq115} we have
\begin{equation}\label{eq116}
{H}_{p}(\mathcal{N}^n(X),\mathcal{N}^{n-1}(X);G)=0, ~~~\forall p >n.
\end{equation}
For the case $p<n-1$, we use the same method as in the case (a). In particular, consider the Universal Coefficient Formula for the pair $\left(\mathcal{N}^n(X),\mathcal{N}^{n-1}(X)\right)$:

\begin{equation}\label{eq117}
\begin{tikzpicture}\small
\node (A) {$0$};
\node (B) [node distance=3.3cm, right of=A] {$\Ext \left(\bar{H}^{p+1}_N \left(\mathcal{N}^n\left(X\right),\mathcal{N}^{n-1}\left(X\right),\mathbb{Z}\right);G\right) $};
\node (C) [node distance=5cm, right of=B] {${H}_p\left(\mathcal{N}^n\left(X\right), \mathcal{N}^{n-1}\left(X\right);G\right)$};
\node (D) [node distance=5cm, right of=C] {$\Hom \left(\bar{H}^p_N\left(\mathcal{N}^n\left(X\right),\mathcal{N}^{n-1}\left(X\right);\mathbb{Z} \right);G\right)$};
\node (E) [node distance=3.3cm, right of=D] {$0~.$};

\draw[->] (A) to node [above]{}(B);
\draw[->] (B) to node [above]{}(C);
\draw[->] (C) to node [above]{}(D);
\draw[->] (D) to node [above]{}(E);
\end{tikzpicture}
\end{equation}
Using the isomorphism \eqref{eq85} the sequence \eqref{eq117} will become:
\begin{equation}\label{eq118}
\begin{tikzpicture}
\node (A) {$0$};
\node (B) [node distance=3cm, right of=A] {$\Ext \left(\varinjlim\bar{H}^{p+1}_N \left(X^n_\lambda,X^{n-1}_\lambda ;\mathbb{Z}\right);G\right) $};
\node (C) [node distance=4.5cm, right of=B] {${H}_p\left(\mathcal{N}^n\left(X\right),\mathcal{N}^{n-1}\left(X\right);G\right)$};
\node (D) [node distance=4.5cm, right of=C] {$\Hom \left(\varinjlim \bar{H}^p_N\left(X^n_\lambda , X^{n-1}_\lambda;\mathbb{Z}\right);G\right)$};
\node (E) [node distance=3cm, right of=D] {$0~.$};

\draw[->] (A) to node [above]{}(B);
\draw[->] (B) to node [above]{}(C);
\draw[->] (C) to node [above]{}(D);
\draw[->] (D) to node [above]{}(E);
\end{tikzpicture}
\end{equation}	
Consequently, if $\left\{ \left(F^n_\alpha ,E^{n-1}_\alpha \right)\right\}$ is the direct system of pairs of compact subspaces of $X^n_\lambda$ and $X^{n-1}_\lambda$, then we have:
\begin{equation}
\begin{tikzpicture}\label{eq119}
\node (A) {$0$};
\node (B) [node distance=2.5cm, right of=A] {${\varprojlim}^1 \bar{H}^{p-1}_N\left(F^n_\alpha ,E^{n-1}_\alpha ;\mathbb{Z}\right)$};
\node (C) [node distance=3.5cm, right of=B] {$\bar{H}^p_N\left(X^n_\lambda,X^{n-1}_\lambda;\mathbb{Z}\right)$};
\node (D) [node distance=3.5cm, right of=C] {$\varprojlim\bar{H}^p_N\left(F^n_\alpha ,E^{n-1}_\alpha ;\mathbb{Z}\right)$};
\node (E) [node distance=2.5cm, right of=D] {$0~,$};

\draw[->] (A) to node [above]{}(B);
\draw[->] (B) to node [above]{}(C);
\draw[->] (C) to node [above]{}(D);
\draw[->] (D) to node [above]{}(E);
\end{tikzpicture}
\end{equation}
and
\begin{equation}\label{eq120}
\begin{tikzpicture}
\node (A) {$0$};
\node (B) [node distance=2.6cm, right of=A] {$\Ext \left(H_{p-1}\left(F^n_\alpha ,E^{n-1}_\alpha  ;\mathbb{Z}\right) ;G \right) $};
\node (C) [node distance=4cm, right of=B] {$\bar{H}^p_N\left(F^n_\alpha ,E^{n-1}_\alpha  ;G\right)$};
\node (D) [node distance=4cm, right of=C] {$\Hom \left({H}_p\left(F^n_\alpha ,E^{n-1}_\alpha  ;\mathbb{Z}\right);G\right)$};
\node (E) [node distance=2.7cm, right of=D] {$0~.$};

\draw[->] (A) to node [above]{}(B);
\draw[->] (B) to node [above]{}(C);
\draw[->] (C) to node [above]{}(D);
\draw[->] (D) to node [above]{}(E);
\end{tikzpicture}
\end{equation}
By \eqref{eq120}, if $p<n-1$, then $\bar{H}^p_N\left(F^n_\alpha ,E^{n-1}_\alpha  ;G\right)=0$ and by \eqref{eq119} $\bar{H}^p_N\left(X^n_\lambda, X^{n-1}_\lambda ;G\right)=0$. Therefore, by \eqref{eq118} the given property is proven.

\item $H_{n-1}\left(\mathcal{N}^n(X),\mathcal{N}^{n-1}(X);G' \right)=0$, for an injective abelian group $G'$. By \eqref{eq117} and \eqref{eq118}, in this case we have:
\begin{equation}\label{eq121}
H_{n-1}\left(\mathcal{N}^n(X),\mathcal{N}^{n-1}(X);G'\right) \simeq  \Hom\left(\bar{H}^n_N \left(\mathcal{N}^n(X),\mathcal{N}^{n-1}(X);\mathbb{Z}\right);G' \right) \simeq \Hom\left( \varinjlim \bar{H}^{n-1}_N \left(X^n_\lambda,X^{n-1}_\lambda ;\mathbb{Z}\right);G' \right).
\end{equation}
By \eqref{eq120}  $\bar{H}^{p}_N \left(F^n_\alpha,E^{n-1}_\alpha ;\mathbb{Z}\right)=0, ~\forall p \le n-1$ and so, by \eqref{eq119} $\bar{H}^{n-1}_N \left(X^n_\lambda,X^{n-1}_\lambda ;\mathbb{Z}\right)=0.$ Therefore, by \eqref{eq121} we have 
\begin{equation}\label{eq122}
H_{n-1}\left(\mathcal{N}^n(X),\mathcal{N}^{n-1}(X);G' \right)=0.
\end{equation}

\end{enumerate}

Let $C_*\left(\mathcal{N}(X) ;G' \right)=\left\{ C_n\left(\mathcal{N}(X) ; G'\right), \partial'_n \right\}$ be the chain complex, where $C_n\left(\mathcal{N}(X) ; G'\right)=H_{n}\left(\mathcal{N}^n(X),\mathcal{N}^{n-1}(X) ;G' \right)$ for an injective abelian group $G'$, and $\partial'_n :C_n \left(\mathcal{N}(X);G'\right) \to C_{n-1}\left(\mathcal{N}(X) ; G'\right)$ be the border  mapping $\partial'_n : H_{n}\left(\mathcal{N}^n(X),\mathcal{N}^{n-1}(X) ;G' \right) \to H_{n-1}\left(\mathcal{N}^{n-1}(X),\mathcal{N}^{n-2}(X) ;G'\right)$. Note that by Universal Coefficient Formula for an injective abelian group $G'$ we have
\begin{equation}\label{eq123}
H_{n}\left(\mathcal{N}^n(X),\mathcal{N}^{n-1}(X);G'\right) \simeq \Hom\left( \varinjlim \bar{H}^{n}_N \left(X^n_\lambda,X^{n-1}_\lambda ;\mathbb{Z}\right);G' \right)
\end{equation}
and therefore, the chain complex $C_*\left(\mathcal{N}(X) ;G' \right)=\left\{ C_n\left(\mathcal{N}(X) ; G' \right), \partial_n \right\}$ is independent of choice of a homology theory. 

By the exact homological sequences for the triples $\left(\mathcal{N}^{n+1}(X),\mathcal{N}^{n}(X),\mathcal{N}^{n-1}(X)\right)$, $\left(\mathcal{N}^{n}(X),\mathcal{N}^{n-1}(X),\mathcal{N}^{n-2}(X)\right)$ and $\left(\mathcal{N}^{n+1}(X),\mathcal{N}^{n}(X),\mathcal{N}^{n-2}(X)\right)$ for an injective abelian group $G'$, we have 
 \begin{equation}\label{eq124}
	\begin{tikzpicture}\small
	\node (A) {$0$};
	\node (B) [node distance=2.5cm, right of=A]{$H_{n+1} \left(\mathcal{N}^{n+1}(X),\mathcal{N}^{n-1}(X);G'\right) $};
	\node (C) [node distance=4cm, right of=B] {$H_{n+1} \left(\mathcal{N}^{n+1}(X),\mathcal{N}^{n}(X);G'\right) $};
	\node (D) [node distance=4cm, right of=C] {$H_n \left(\mathcal{N}^{n}(X),\mathcal{N}^{n-1}(X);G'\right) $};
	\node (E) [node distance=4cm, right of=D] {$H_n \left(\mathcal{N}^{n+1}(X),\mathcal{N}^{n-1}(X);G'\right) $};
	\node (F) [node distance=2.5cm, right of=E] {$0,$};

	\draw[->] (A) to node [above]{}(B);
	\draw[->] (B) to node [above]{} (C);
	\draw[->] (C) to node [above]{$\partial'_{n+1}$} (D);
	\draw[->] (D) to node [above]{} (E);
	\draw[->] (E) to node [above]{} (F);

	\end{tikzpicture}
\end{equation}

\begin{equation}\label{eq125}
	\begin{tikzpicture}\small
	\node (A) {$0$};
	\node (B) [node distance=2.5cm, right of=A]{$H_n \left(\mathcal{N}^{n}(X),\mathcal{N}^{n-2}(X);G'\right) $};
	\node (C) [node distance=4cm, right of=B] {$H_n \left(\mathcal{N}^{n}(X),\mathcal{N}^{n-1}(X);G'\right) $};
	\node (D) [node distance=4cm, right of=C] {$H_{n-1} \left(\mathcal{N}^{n-1}(X),\mathcal{N}^{n-2}(X);G'\right) $};
	\node (E) [node distance=4cm, right of=D] {$H_{n-1} \left(\mathcal{N}^{n}(X),\mathcal{N}^{n-2}(X);G'\right) $};
	\node (F) [node distance=2.5cm, right of=E] {$0,$};

	\draw[->] (A) to node [above]{}(B);
	\draw[->] (B) to node [above]{$j'_*$} (C);
	\draw[->] (C) to node [above]{$\partial'_n$} (D);
	\draw[->] (D) to node [above]{} (E);
	\draw[->] (E) to node [above]{} (F);

	\end{tikzpicture}
\end{equation}

\begin{equation}\label{eq126}
	\begin{tikzpicture}\small
	\node (A) {$\dots$};
	\node (B) [node distance=2.5cm, right of=A]{$H_{n+1} \left(\mathcal{N}^{n+1}(X),\mathcal{N}^{n-2}(X);G'\right) $};
	\node (C) [node distance=4cm, right of=B] {$H_{n+1} \left(\mathcal{N}^{n+1}(X),\mathcal{N}^{n}(X);G'\right) $};
	\node (D) [node distance=4cm, right of=C] {$H_n \left(\mathcal{N}^{n}(X),\mathcal{N}^{n-2}(X);G'\right) $};
	\node (E) [node distance=4cm, right of=D] {$H_n \left(\mathcal{N}^{n+1}(X),\mathcal{N}^{n-2}(X);G'\right) $};
	\node (F) [node distance=2.5cm, right of=E] {$0.$};

	\draw[->] (A) to node [above]{}(B);
	\draw[->] (B) to node [above]{} (C);
	\draw[->] (C) to node [above]{$\partial '$} (D);
	\draw[->] (D) to node [above]{$q'_*$} (E);
	\draw[->] (E) to node [above]{} (F);

	\end{tikzpicture}
\end{equation}
Let consider the corresponding commutative diagram:

\begin{equation}\label{eq127}
	\begin{tikzpicture}\small
	\node (A) {$0$};
	\node (B) [node distance=2.5cm, right of=A]{$H_{n+1} \left(\mathcal{N}^{n+1}(X),\mathcal{N}^{n-1}(X);G'\right) $};
	\node (C) [node distance=4cm, right of=B] {$H_{n+1} \left(\mathcal{N}^{n+1}(X),\mathcal{N}^{n}(X);G'\right) $};
	\node (D) [node distance=4cm, right of=C] {$H_{n} \left(\mathcal{N}^{n}(X),\mathcal{N}^{n-1}(X);G'\right) $};
	\node (E) [node distance=4cm, right of=D] {$H_{n} \left(\mathcal{N}^{n+1}(X),\mathcal{N}^{n-1}(X);G'\right) $};
	\node (F) [node distance=2.5cm, right of=E] {$0$};
	\node (F1) [node distance=1.5cm, above of=F] {$0$};
	\node (D1) [node distance=1.5cm, above of=D] {$H_{n} \left(\mathcal{N}^{n}(X),\mathcal{N}^{n-2}(X);G'\right) $};
	\node (D2) [node distance=1.5cm, above of=D1] {$0$};
	\node (E1) [node distance=1.5cm, above of=E] {$H_{n} \left(\mathcal{N}^{n+1}(X),\mathcal{N}^{n-2}(X);G'\right) $};
	\node (D-1) [node distance=1.5cm, below of=D] {$H_{n-1} \left(\mathcal{N}^{n-1}(X),\mathcal{N}^{n-2}(X);G'\right) $};
	\node (D-2) [node distance=1.5cm, below of=D-1] {$H_{n-1} \left(\mathcal{N}^{n}(X),\mathcal{N}^{n-2}(X);G'\right) $};
	\node (D-3) [node distance=1.5cm, below of=D-2] {$0$};
	
	\node (C-1) [node distance=1.5cm, below of=C] {$H_{n+1} \left(\mathcal{N}^{n+1}(X),\mathcal{N}^{n-2}(X);G'\right) $};
	\node (C-2) [node distance=1.5cm, below of=C-1] {$\vdots$};
	
	\draw[->] (A) to node [above]{}(B);
	\draw[->] (B) to node [above]{} (C);
	\draw[->] (C) to node [above]{$\partial '_{n+1}$} (D);
	\draw[->] (D) to node [above]{} (E);
	\draw[->] (E) to node [above]{} (F);
	
	\draw[->] (D2) to node [right]{}(D1);
	\draw[->] (D1) to node [right]{$j'_*$}(D);
	\draw[->] (D) to node [right]{$\partial '_{n}$}(D-1);
	\draw[->] (D-1) to node [right]{}(D-2);
	\draw[->] (D-2) to node [right]{}(D-3);
	
	\draw[->] (C-2) to node [above]{}(C-1);
	\draw[->] (C-1) to node [above]{}(C);
	\draw[->] (C) to node [above]{$\partial '$}(D1);
	\draw[->] (D1) to node [above]{$q'_*$}(E1);
	\draw[->] (E1) to node [above]{}(F1);
	\end{tikzpicture}
\end{equation}
In this case we have
\begin{equation}\label{eq128}
H_{n} \left(\mathcal{N}^{n+1}(X),\mathcal{N}^{n-2}(X);G'\right) \simeq H_{n} \left(\mathcal{N}^{n}(X),\mathcal{N}^{n-2}(X);G'\right)/ Im \partial ' \simeq Im j'_* / Im(j'_* \circ \partial ') \simeq  Ker \partial '_n / Im \partial '_{n+1}.
\end{equation}
On the other, hand $Ker \partial '_n / Im \partial '_{n+1} \simeq H_n\left(C_*\left(\mathcal{N}(X);G'\right)\right)$ and so,  we have
\begin{equation}\label{eq129}
 H_n\left(\mathcal{N}^{n+1}(X), \mathcal{N}^{n-2}(X);G'\right) \simeq H_n\left(C_*\left(\mathcal{N}(X);G'\right)\right).
\end{equation}
Consider the long homological exact sequence of the triple $\left(\mathcal{N}^{n+2}(X),\mathcal{N}^{n+1}(X),\mathcal{N}^{n-2}(X)\right):$
 \begin{equation}\label{eq01}
\begin{tikzpicture}\small
\node (B) {$\dots$};
\node (C) [node distance=3cm, right of=B] {$H_{n+1} \left(\mathcal{N}^{n+2}(X),\mathcal{N}^{n+1}(X);G\right) $};
\node (D) [node distance=4.2cm, right of=C] {$H_n \left(\mathcal{N}^{n+1}(X),\mathcal{N}^{n-2}(X);G\right) $};
\node (E) [node distance=4.2cm, right of=D] {$H_n \left(\mathcal{N}^{n+2}(X),\mathcal{N}^{n-2}(X);G\right) $};
\node (F) [node distance=2.7cm, right of=E] {$0.$};

\draw[->] (B) to node [above]{} (C);
\draw[->] (C) to node [above]{$\partial$} (D);
\draw[->] (D) to node [above]{$i_*$} (E);
\draw[->] (E) to node [above]{} (F);

\end{tikzpicture}
\end{equation}
By the property (c), for an injective abelian group $G'$, we have $H_{n+1} \left(\mathcal{N}^{n+2}(X),\mathcal{N}^{n+1}(X);G'\right) =0$ and so, 
\begin{equation}\label{eq02}
H_n \left(\mathcal{N}^{n+1}(X),\mathcal{N}^{n-2}(X);G'\right) \simeq H_n \left(\mathcal{N}^{n+2}(X),\mathcal{N}^{n-2}(X);G'\right).
\end{equation}
Therefore, by \eqref{eq129} and \eqref{eq02} we have
\begin{equation}\label{eq03}
H_n \left(C_*\left(\mathcal{N}(X);G'\right)\right) \simeq H_n \left(\mathcal{N}^{n+2}(X),\mathcal{N}^{n-2}(X);G'\right).
\end{equation}

Consider the long homological exact sequence of the triple $\left(\mathcal{N}^{n+2}(X),\mathcal{N}^{n-1}(X),\mathcal{N}^{n-2}(X)\right):$
\begin{equation}\label{eq04}
\begin{tikzpicture}\small
\node (B) {$\dots$};
\node (C) [node distance=2.5cm, right of=B] {$H_{n+1} \left(\mathcal{N}^{n-1}(X),\mathcal{N}^{n-2}(X);G\right) $};
\node (D) [node distance=4cm, right of=C] {$H_{n+1} \left(\mathcal{N}^{n+2}(X),\mathcal{N}^{n-2}(X);G\right) $};
\node (E) [node distance=4cm, right of=D] {$H_{n+1} \left(\mathcal{N}^{n+2}(X),\mathcal{N}^{n-1}(X);G\right) $};
\node (F) [node distance=4cm, right of=E] {$H_{n} \left(\mathcal{N}^{n-1}(X),\mathcal{N}^{n-2}(X);G\right) $};
\node (G) [node distance=2.5cm, right of=F] {$\dots ~.$};

\draw[->] (B) to node [above]{} (C);
\draw[->] (C) to node [above]{} (D);
\draw[->] (D) to node [above]{} (E);
\draw[->] (E) to node [above]{} (F);
\draw[->] (F) to node [above]{} (G);
\end{tikzpicture}
\end{equation}
By the property (a) - $H_{p} \left(\mathcal{N}^{n}(X);G\right) =0, ~\forall p>n$ and by the long homological sequences of the pair $\left(\mathcal{N}^{n-1}(X),\mathcal{N}^{n-2}(X)\right) $ we have $H_{n+1} \left(\mathcal{N}^{n-1}(X),\mathcal{N}^{n-2}(X);G\right) =0$ and $H_{n} \left(\mathcal{N}^{n-1}(X),\mathcal{N}^{n-2}(X);G\right) =0.$  Therefore,
\begin{equation}\label{eq05}
H_{n+1} \left(\mathcal{N}^{n+2}(X),\mathcal{N}^{n-2}(X);G\right) \simeq H_{n+1} \left(\mathcal{N}^{n+2}(X),\mathcal{N}^{n-1}(X);G\right).
\end{equation} 
By \eqref{eq129} and \eqref{eq05} we have
\begin{equation}\label{eq06}
H_{n+1} \left(C_*\left(\mathcal{N}(X);G'\right)\right) \simeq H_{n+1} \left(\mathcal{N}^{n+2}(X),\mathcal{N}^{n-2}(X);G'\right).
\end{equation}
Consider the long exact homological sequence of the pair $\left(\mathcal{N}^n(X),\mathcal{N}^{n-1}(X) \right)$ for the relosution $0 \to G \os{\alpha}{\lra} G' \os{\beta}{\lra} G'' \to 0$ of an abelian group $G:$
\begin{equation}\label{eq001}
\begin{tikzpicture}\small
\node (A0) {$\dots$};
\node (A) [node distance=2.7cm, right of=A0]{$H_{n+1} \left(\mathcal{N}^{n}(X),\mathcal{N}^{n-1}(X);G\right) $};
\node (B) [node distance=4cm, right of=A]{$H_{n} \left(\mathcal{N}^{n}(X),\mathcal{N}^{n-1}(X);G\right) $};
\node (C) [node distance=4cm, right of=B] {$H_{n} \left(\mathcal{N}^{n}(X),\mathcal{N}^{n-1}(X);G'\right) $};
\node (C0) [node distance=2.5cm, right of=C] {};
\node (C1) [node distance=1.5cm, below of=A0] {};
\node (D) [node distance=4cm, right of=C1] {$H_{n} \left(\mathcal{N}^{n}(X),\mathcal{N}^{n-1}(X);G''\right) $};
\node (E) [node distance=4cm, right of=D] {$H_{n-1} \left(\mathcal{N}^{n}(X),\mathcal{N}^{n-1}(X);G\right) $};
\node (F) [node distance=4cm, right of=E] {$H_{n-1} \left(\mathcal{N}^{n}(X),\mathcal{N}^{n-1}(X);G'\right) $};
\node (G) [node distance=2.7cm, right of=F] {$\dots~~.$};

\draw[->] (A0) to node [above]{}(A);
\draw[->] (A) to node [above]{}(B);
\draw[->] (B) to node [above]{$\alpha_n$} (C);
\draw[->] (C) to node [above]{$\beta_n$} (C0);
\draw[->] (C1)+(1.5,0) to node [above]{$\beta_n$} (D);
\draw[->] (D) to node [above]{} (E);
\draw[->] (E) to node [above]{} (F);
\draw[->] (F) to node [above]{} (G);
\end{tikzpicture}
\end{equation}
By the property (a)  $H_{p} \left(\mathcal{N}^{n}(X);G\right) =0, ~\forall p>n$ and we have $H_{n+1} \left(\mathcal{N}^{n}(X),\mathcal{N}^{n-1}(X);G\right) =0.$ By the property (c), for each injective abelian group $H_{n-1} \left(\mathcal{N}^{n-1}(X),\mathcal{N}^{n-2}(X);G\right) =0.$  Therefore, by \eqref{eq001} we have the following four-term exact  sequence:
\begin{equation}\label{eq131}
\begin{tikzpicture}\small
\node (A) {$0$};
\node (B) [node distance=2.1cm, right of=A]{$H_{n} \left(\mathcal{N}^{n}(X),\mathcal{N}^{n-1}(X);G\right) $};
\node (C) [node distance=3.8cm, right of=B] {$H_{n} \left(\mathcal{N}^{n}(X),\mathcal{N}^{n-1}(X);G'\right) $};
\node (D) [node distance=3.8cm, right of=C] {$H_{n} \left(\mathcal{N}^{n}(X),\mathcal{N}^{n-1}(X);G''\right) $};
\node (E) [node distance=3.8cm, right of=D] {$H_{n-1} \left(\mathcal{N}^{n}(X),\mathcal{N}^{n-1}(X);G\right) $};
\node (F) [node distance=2.2cm, right of=E] {$0,$};

\draw[->] (A) to node [above]{}(B);
\draw[->] (B) to node [above]{$\alpha_n$} (C);
\draw[->] (C) to node [above]{$\beta_n$} (D);
\draw[->] (D) to node [above]{} (E);
\draw[->] (E) to node [above]{} (F);
\end{tikzpicture}
\end{equation}
where $\beta_n:H_{n} \left(\mathcal{N}^{n}(X),\mathcal{N}^{n-1}(X);G'\right)  \to H_{n} \left(\mathcal{N}^{n}(X),\mathcal{N}^{n-1}(X);G''\right) $ is induced by $\beta :G' \to G''$. Consequently, we obtain the chain map 
\begin{equation}\label{eq132}
\beta_{\#}=\left\{\beta_n\right\}: C_*\left(\mathcal{N}(X);G'\right) \to C_*\left(\mathcal{N}(X);G''\right).
\end{equation}
Let $C_*\left(\beta_{\#}\right)=\left\{C_n\left(\beta_{\#}\right), \tilde{\partial}_n \right\}$ be the chain cone of the chain map $\beta_{\#}$, i.e.,
\begin{equation}\label{eq133}
C_n\left(\beta_{\#}\right) \simeq C_n\left(\mathcal{N}(X);G'\right) \oplus C_{n+1}\left(\mathcal{N}(X);G''\right),
\end{equation}

\begin{equation}\label{eq134}
\tilde{\partial}_n(c'_n, c''_{n+1})=(\partial'_n(c'_n), \beta_n (c'_n) -\partial''_{n+1}(c''_{n+1})), ~~~ \forall (c'_n, c''_{n+1}) \in C_n(\beta_{\#}).
\end{equation}
Our aim is to show that
\begin{equation}\label{eq135}
H_n\left( C_*\left(\beta_\#\right)\right) \simeq H_n\left(\mathcal{N}^{n+2}(X), \mathcal{N}^{n-2}(X);G\right) .
\end{equation}
For this aim, consider the following short exact sequence:

\begin{equation}\label{eq136}
0 \to C_{*+1}\left(\mathcal{N}(X);G''\right) \os{\sigma}{\lra} C_*\left(\beta_\#\right) \os{\tau}{\lra} C_{*}\left(\mathcal{N}(X);G'\right) \to 0,
\end{equation} 
where $\sigma$ and $\tau$ are defined in the following way:
\begin{equation}\label{eq137}
\sigma \left(c''_{n+1}\right)=\left(0,c''_{n+1}\right),~~~\forall ~c''_{n+1} \in C_{n+1}\left(\mathcal{N}(X);G''\right),
\end{equation}
\begin{equation}\label{eq138}
\tau \left(c'_n,c''_{n+1}\right)=c'_n,~~~\forall ~\left(c'_n,c''_{n+1}\right) \in C_{n}\left(\beta_\#\right).
\end{equation}
Hence, the sequence \eqref{eq136} induces the long exact homological sequence:
\begin{equation}\label{eq139}
\begin{tikzpicture}\small
\node (A) {$\dots$};
\node (B) [node distance=2cm, right of=A]{$H_{n+1} \left(\mathcal{N}(X);G''\right) $};
\node (C) [node distance=3cm, right of=B] {$H_{n} \left(C_*\left(\beta_\#\right)\right) $};
\node (D) [node distance=3cm, right of=C] {$H_{n} \left(\mathcal{N}(X);G'\right) $};
\node (E) [node distance=3cm, right of=D] {$H_{n} \left(\mathcal{N}(X);G''\right) $};
\node (F) [node distance=2cm, right of=E] {$\dots~.$};

\draw[->] (A) to node [above]{}(B);
\draw[->] (B) to node [above]{$\sigma_*$} (C);
\draw[->] (C) to node [above]{$\tau_*$} (D);
\draw[->] (D) to node [above]{$E$} (E);
\draw[->] (E) to node [above]{} (F);
\end{tikzpicture}
\end{equation}
On the other hand, we have the homology exact sequence of pair $\left(\mathcal{N}^{n+2}(X),\mathcal{N}^{n-2}(X)\right)$ with respect to the short exact sequence $0 \to G \os{\alpha}{\lra} G' \os{\beta}{\lra} G'' \to 0$:
\begin{equation}\label{eq140}
\begin{tikzpicture}\small
\node (A) {$\dots$};
\node (B) [node distance=2.7cm, right of=A]{$H_{n+1} \left(\mathcal{N}^{n+2}(X),\mathcal{N}^{n-2}(X);G''\right) $};
\node (C) [node distance=4cm, right of=B] {$H_{n} \left(\mathcal{N}^{n+2}(X),\mathcal{N}^{n-2}(X);G\right) $};
\node (D) [node distance=4cm, right of=C] {$H_{n} \left(\mathcal{N}^{n+2}(X),\mathcal{N}^{n-2}(X);G'\right) $};
\node (E) [node distance=4.2cm, right of=D] {$H_{n} \left(\mathcal{N}^{n+2}(X),\mathcal{N}^{n-2}(X);G''\right) $};
\node (F) [node distance=2.7cm, right of=E] {$\dots~.$};

\draw[->] (A) to node [above]{}(B);
\draw[->] (B) to node [above]{$d_*$} (C);
\draw[->] (C) to node [above]{$\alpha_*$} (D);
\draw[->] (D) to node [above]{$\beta_*$} (E);
\draw[->] (E) to node [above]{} (F);
\end{tikzpicture}
\end{equation}
By \eqref{eq03} and \eqref{eq06}, to obtain the isomorphism \eqref{eq135} it is sufficient to define a homomorphism $\varphi : H_{n} \left(\mathcal{N}^{n+2}(X),\mathcal{N}^{n-2}(X);G\right) \to H_n\left(C_*\left(\beta_\#\right)\right)$ such that the following diagram
\begin{equation}\label{eq141}
\begin{tikzpicture}\small

\node (A) {$\dots$};
\node (B) [node distance=2.6cm, right of=A] {$H_{n+1} \left(\mathcal{N}^{n+2}(X),\mathcal{N}^{n-2}(X);G''\right) $};
\node (C) [node distance=4cm, right of=B] {$H_{n} \left(\mathcal{N}^{n+2}(X),\mathcal{N}^{n-2}(X);G\right) $};
\node (D) [node distance=4cm, right of=C] {$H_{n} \left(\mathcal{N}^{n+2}(X),\mathcal{N}^{n-2}(X);G'\right) $};
\node (E) [node distance=4cm, right of=D] {$H_{n} \left(\mathcal{N}^{n+2}(X),\mathcal{N}^{n-2}(X);G''\right) $};
\node (F) [node distance=2.5cm, right of=E] {$\dots$~};

\draw[->] (A) to node [above]{}(B);
\draw[->] (B) to node [above]{$d_*$}(C);
\draw[->] (C) to node [above]{$\alpha_*$}(D);
\draw[->] (D) to node [above]{$\beta_*$}(E);
\draw[->] (E) to node [above]{}(F);

\node (A1) [node distance=1.5cm, below of=A] {$\dots$};
\node (B1) [node distance=1.5cm, below of=B] {$H_{n+1} \left(\mathcal{N}(X);G''\right) $};
\node (C1) [node distance=1.5cm, below of=C] {$H_{n} \left(C_*\left(\beta_\#\right)\right) $};
\node (D1) [node distance=1.5cm, below of=D] {$H_{n} \left(\mathcal{N}(X);G'\right) $};
\node (E1) [node distance=1.5cm, below of=E] {$H_{n} \left(\mathcal{N}(X);G''\right) $};
\node (F1) [node distance=1.5cm, below of=F] {$\dots$~};

\draw[->] (A1) to node [above]{}(B1);
\draw[->] (B1) to node [above]{$\sigma_*$}(C1);
\draw[->] (C1) to node [above]{$\tau_*$}(D1);
\draw[->] (D1) to node [above]{$E$}(E1);
\draw[->] (E1) to node [above]{}(F1);

\draw[->] (B) to node [right]{$\simeq$}(B1);
\draw[->] (C) to node [right]{$\varphi$}(C1);
\draw[->] (D) to node [right]{$\simeq$}(D1);
\draw[->] (E) to node [right]{$\simeq$}(E1);
\end{tikzpicture}
\end{equation}
is commutative.

Consider two diagrams of type \eqref{eq127} corresponding to the coefficient groups $G'$ and $G''$ and homomorphims between them induced by $\beta:G' \to G''$. Consequently, homomorphisms and elements will be labeled with $'$ and $''$ to distinguish them. Let $\bar{h}_n \in H_{n} \left(\mathcal{N}^{n+2}(X),\mathcal{N}^{n-2}(X);G\right)$ be an element. By \eqref{eq01} there is an element $h_n \in H_{n} \left(\mathcal{N}^{n+1}(X),\mathcal{N}^{n-2}(X);G\right)$ such that $i_*(h_n)=\bar{h}_n$. Then, $\alpha_*\left(h_n\right) \in H_{n} \left(\mathcal{N}^{n+1}(X),\mathcal{N}^{n-2}(X);G'\right)$ and so by diagram \eqref{eq127} there is an element $h'_n \in H_{n} \left(\mathcal{N}^{n}(X),\mathcal{N}^{n-2}(X);G'\right)$ such that $q'_*(h'_n)=\alpha_*(h_n)$. Let $c'_n=j'_*(h'_n) \in H_{n} \left(\mathcal{N}^{n}(X),\mathcal{N}^{n-1}(X);G'\right)=C_{n} \left(\mathcal{N}(X);G'\right)$ and  $h''_n=\beta_*(h'_n) \in H_{n} \left(\mathcal{N}^{n}(X),\mathcal{N}^{n-2}(X);G''\right).$ In this case, $q''_*(h''_n)=q''_* \beta_*(h'_n)=\beta_* q'_*(h'_n)=\beta_* \alpha_*(h_n)=0.$ Therefore, $h''_n \in Ker q''_*=Im \partial ''$ and so, there exists $c''_{n+1} \in H_{n+1} \left(\mathcal{N}^{n+1}(X),\mathcal{N}^{n}(X);G''\right)=C_{n+1} \left(\mathcal{N}(X);G''\right)$ such that $\partial '' (c''_{n+1})=h''_n$. In this case,
\[\tilde{\partial} \left(c'_n,c''_{n+1}\right)=\left(\partial '_n\left(c'_n\right),\beta_*\left(c'_n\right)-\partial''_{n+1}\left(c''_{n+1}\right)\right)=\left(\partial '_n\left(j'_*\left(h'_n\right)\right),\beta_* \left(j'_*\left(h'_n\right)\right)-j''_* \left(\partial''\left(c''_{n+1}\right)\right) \right)=\]
\begin{equation}
=\left(0,j''_* \left(\beta_*\left(h'_n\right)\right)-j''_* \left(\partial''\left(c''_{n+1}\right)\right) \right)=\left(0,j''_* \left(\beta_*\left(h'_n\right)-\partial''\left(c''_{n+1}\right)\right) \right)=\left(0,j''_* \left(\beta_*\left(h'_n\right)-h''_n\right) \right)=\left(0,j''_*\left(0\right)\right)=0.
\end{equation}
Therefore
$(c'_n,c''_{n+1}) \in C_*\left(\beta_\#\right)$ is a cycle. Our aim is to show that the pair $(c'_n,c''_{n+1}) \in C_*\left(\beta_\#\right)$ is independent of chose of representatives $h_n \in H_{n} \left(\mathcal{N}^{n+1}(X),\mathcal{N}^{n-2}(X);G\right).$ Indeed, let $\tilde{h}_n \in H_{n} \left(\mathcal{N}^{n+1}(X),\mathcal{N}^{n-2}(X);G\right)$ be another representative of $\left(i_*\right)^{-1}(\bar{h}_n),$ then by  \eqref{eq01} there exists $h_{n+1} \in H_{n+1} \left(\mathcal{N}^{n+2}(X),\mathcal{N}^{n+1}(X);G\right)$ such that $\partial (h_{n+1})=h_n-\tilde{h}_n.$ Consider, the following commutative diagram:
\begin{equation}\label{eq0001}
	\begin{tikzpicture}\small
	\node (A) {$\dots$};
	\node (B) [node distance=3cm, right of=A] {$H_{n+1} \left(\mathcal{N}^{n+2}(X),\mathcal{N}^{n+1}(X);G\right) $};
	\node (C) [node distance=4.2cm, right of=B] {$H_n \left(\mathcal{N}^{n+1}(X),\mathcal{N}^{n-2}(X);G\right) $};
	\node (D) [node distance=4.2cm, right of=C] {$H_n \left(\mathcal{N}^{n+2}(X),\mathcal{N}^{n-2}(X);G\right) $};
	\node (E) [node distance=2.7cm, right of=D] {$0$};
	
	\node (A1) [node distance=1.5cm, below of=A] {$\dots$};
	\node (B1) [node distance=1.5cm, below of=B] {$H_{n+1} \left(\mathcal{N}^{n+2}(X),\mathcal{N}^{n+1}(X);G'\right) $};
	\node (C1) [node distance=1.5cm, below of=C] {$H_{n+1} \left(\mathcal{N}^{n+2}(X),\mathcal{N}^{n-1}(X);G'\right) $};
	\node (D1) [node distance=1.5cm, below of=D] {$H_{n} \left(\mathcal{N}^{n+2}(X),\mathcal{N}^{n-2}(X);G'\right) $};
	\node (E1) [node distance=1.5cm, below of=E] {$0~.$};

	\draw[->] (A) to node [above]{} (B);
	\draw[->] (B) to node [above]{$\partial$} (C);
	\draw[->] (C) to node [above]{$i_*$} (D);
	\draw[->] (D) to node [above]{} (E);
	
	\draw[->] (A1) to node [above]{} (B1);
	\draw[->] (B1) to node [above]{$\partial '$} (C1);
	\draw[->] (C1) to node [above]{$i'_*$} (D1);
	\draw[->] (D1) to node [above]{} (E1);
	
	\draw[->] (B) to node [right]{$\alpha_*$} (B1);
	\draw[->] (C) to node [right]{$\alpha_*$} (C1);
	\draw[->] (D) to node [right]{$\alpha_*$} (D1);

	\end{tikzpicture}
\end{equation}
By the property (c), we have $H_{n+1} \left(\mathcal{N}^{n+2}(X),\mathcal{N}^{n+1}(X);G'\right) =0$ and so $\alpha_* \circ \partial = \partial' \circ \alpha_*=0$. Hence,
\begin{equation}
\alpha_*(h_n)-\alpha_*(\tilde{h}_n)=\alpha_*\left(h_n-\tilde{h}_n\right)=\alpha_*\left(\partial\left(h_{n+1}\right)\right)=\partial '\left(\alpha_*\left(h_{n+1}\right)\right)=0
\end{equation}   
and so, they define the same cocyle $\left(c'_n.c''_{n+1}\right).$ Therefore, we can define $\varphi$ in the following way:
\begin{equation}\label{eq142}
\varphi(h_n)=\left(c'_n,c''_{n+1}\right)+Im \tilde{\partial}_{n+1}= \left[{\left(c'_n,c''_{n+1}\right)}\right], ~~~\forall~ h_n \in H_{n} \left(\mathcal{N}^{n+1}(X),\mathcal{N}^{n-2}(X);G'\right).
\end{equation}
Consequently, by commutativity of the diagram \eqref{eq141}, we have that for each coefficient group $G$ there is an isomorphism:
\begin{equation}\label{eq143}
H_n\left( C_*\left(\beta_\#\right)\right) \simeq H_n\left(\mathcal{N}^{n+2}(X), \mathcal{N}^{n-2}(X);G\right) .
\end{equation}

Using the long exact homological sequence of the pair $\left(\mathcal{N}^{n+2}(X), \mathcal{N}^{n-2}(X)\right)$: 
\begin{equation}\label{eq144}
\dots \to H_{n} \left(\mathcal{N}^{n-2}(X);G\right) \to H_{n} \left(\mathcal{N}^{n+2}(X);G\right) \to H_{n} \left(\mathcal{N}^{n+2}(X),\mathcal{N}^{n-2}(X);G\right) \to H_{n-1} \left(\mathcal{N}^{n-2}(X);G\right) \to \dots~~ 
\end{equation}
and  the property (a) - $H_{n}\left(\mathcal{N}^{n-2}(X);G\right) =0$, $H_{n-1}\left(\mathcal{N}^{n-2}(X);G\right)=0$ - we have an isomorphism:
\begin{equation}\label{eq145}
H_{n}\left(\mathcal{N}^{n+2}(X);G\right) \simeq  H_{n}\left(\mathcal{N}^{n+2}(X),\mathcal{N}^{n-2}(X);G\right).
\end{equation}
Note that by property (a) - $H_p\left(\mathcal{N}^{n}(X);G\right)=0, ~\forall p >n$ and  the long homological sequence of pair $\left(\mathcal{N}^{n+r+1}(X), \mathcal{N}^{n+r}(X)\right),$ $\forall r\ge 2$ we have the isomorphism
\begin{equation}\label{eq146}
H_{n}\left(\mathcal{N}^{n+r}(X);G\right) \simeq H_{n}\left(\mathcal{N}^{n+r+1}(X);G\right), \forall r \ge 2.
\end{equation} 
On the other hand, using the same method, we can prove that $H_{n}\left(\mathcal{N}(X),\mathcal{N}^{n+2}(X);G\right)=0$ and so, by the long homological sequence of pair $\left(\mathcal{N}(X), \mathcal{N}^{n+2}(X)\right)$ we have
\begin{equation}\label{eq147}
H_{n}\left(\mathcal{N}^{n+2}(X);G\right) \simeq H_{n}\left(\mathcal{N}(X);G\right).
\end{equation}  
By \eqref{eq147}, \eqref{eq146}, \eqref{eq145} and \eqref{eq143} we obtain that
\begin{equation}\label{eq148}
H_n\left(C_*\left(\beta_\#\right)\right) \simeq H_n\left(\mathcal{N}(X);G\right).
\end{equation}

\end{proof}

For a paracompact space $X$ the limit space $\mathcal{N}(X)$ is homotopy equivalent to the space $X$ (see Lemma 2.1 in \cite{San}). Therefore, the isomorphism \eqref{eq102} is fulfilled without $CIG$ property. Consequently, if we follow the proof of Theorem \ref{thm.7} we obtain the following:

\begin{corollary}[{cf. \cite[Theorem]{San}}] \label{Cor.9}
	There exists one and only one exact bifunctor homology theory (up to natural equivalence) on the category $\mathcal{K}^2_{Par}$ of closed pairs of paracompact spaces, which  satisfies the Eilenberg-Steenrod axioms on the subcategory  $\mathcal{K}^2_{Pol}$ and has $UCF_{\mathcal{K}^2_{Pol}}$  property.
\end{corollary}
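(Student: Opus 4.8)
The plan is to replay the proof of Theorem \ref{thm.7} almost verbatim, replacing the $CIG$ property at its single essential use by the homotopy equivalence of San's Lemma 2.1. In Theorem \ref{thm.7} the homological $CIG$ property was invoked only in the passage \eqref{eq99}--\eqref{eq102}, whose sole purpose was to produce the isomorphism $p_*\colon H_n(X;G)\to H_n(\mathcal{N}(X);G)$ for every coefficient group, where $p\colon X\to\mathcal{N}(X)$ is the canonical map \eqref{eq86} into the limit of the polyhedral system ${\bf X}=\{X_\lambda,p_{\lambda\lambda'}\}$.

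First I would fix an exact bifunctor homology theory $H_*$ on $\mathcal{K}^2_{Par}$ obeying the Eilenberg-Steenrod axioms on $\mathcal{K}^2_{Pol}$ and $UCF_{\mathcal{K}^2_{Pol}}$, and reconstruct the system ${\bf X}$, the limit $\mathcal{N}(X)=\varprojlim X_\lambda$, the map $p$, and the cohomology isomorphism \eqref{eq93} exactly as in Theorem \ref{thm.7}; these steps are purely cohomological and rest on the continuity of the normal cohomology (Corollary \ref{Cor.2'}), never on homological continuity. Since $X$ is paracompact, Lemma 2.1 of \cite{San} asserts that $p$ is a homotopy equivalence; as an exact homology theory is homotopy invariant throughout its domain $\mathcal{K}^2_{Par}$, the map $p_*$ is automatically an isomorphism, which is precisely \eqref{eq102} obtained with no appeal to $CIG$.

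From here the filtration mechanism \eqref{eq103}--\eqref{eq148} of Theorem \ref{thm.7} runs unchanged in form: I install the skeletal filtration $\{\mathcal{N}^n(X)\}$ on $\mathcal{N}(X)$, establish the vanishing statements (a)--(c), build the chain complex $C_*(\mathcal{N}(X);G')$ of \eqref{eq123} for injective $G'$, pass to the cone $C_*(\beta_\#)$ via \eqref{eq132}--\eqref{eq134}, and reproduce the identification \eqref{eq135}--\eqref{eq148}, yielding $H_n(X;G)\simeq H_n(\mathcal{N}(X);G)\simeq H_n(C_*(\beta_\#))$. Because $C_*(\beta_\#)$ is assembled solely from the fixed cohomology groups $\bar H^*_N(X^n_\lambda,X^{n-1}_\lambda)$, this value is common to every theory meeting the hypotheses; as $\bar H^N_*$ is one such theory (Corollary \ref{Cor.7}), the two coincide on $\mathcal{K}^2_{Par}$, which gives existence and uniqueness simultaneously.

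The step I expect to resist a direct transfer, and hence the heart of the argument, is the extraction of the relative groups $H_n(\mathcal{N}^n(X),\mathcal{N}^{n-1}(X);G')$ in \eqref{eq123}. In Theorem \ref{thm.7} this rested on the full Universal Coefficient Formula for the non-polyhedral pairs $(\mathcal{N}^n,\mathcal{N}^{n-1})$, furnished by Theorem \ref{thm.6} and therefore by $CIG$. To remove that dependence I would exploit the limit-of-polyhedra structure directly: the identification \eqref{eq105'} presents $\bar H^*_N(\mathcal{N}^n,\mathcal{N}^{n-1})$ as $\varinjlim\bar H^*_N(X^n_\lambda,X^{n-1}_\lambda)$, over each compact polyhedral pair $(F^n_\alpha,E^{n-1}_\alpha)$ the sequence \eqref{eq120} is an instance of $UCF_{\mathcal{K}^2_{Pol}}$, and for injective $G'$ the $\Ext$ term vanishes; threading these through the $\varprojlim^1$-sequence \eqref{eq119} should reconstitute \eqref{eq123} without invoking homological $CIG$. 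Confirming that this reconstruction is legitimate — in particular that the inverse-limit and $\varprojlim^1$ terms for these non-polyhedral skeletal pairs behave as in the compact case once homological continuity is no longer assumed — is the principal obstacle.
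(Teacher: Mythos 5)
Your first three paragraphs reproduce the paper's own proof exactly: the paper's entire argument for Corollary \ref{Cor.9} is the observation that, by Lemma 2.1 of \cite{San}, for paracompact $X$ the canonical map $p\colon X\to\mathcal{N}(X)$ is a homotopy equivalence, so that the isomorphism \eqref{eq102} holds by homotopy invariance of $H_*$ rather than via the chain \eqref{eq98}--\eqref{eq100} resting on $CIG$, followed by the instruction to rerun the proof of Theorem \ref{thm.7}. To that extent your proposal and the paper coincide.

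The obstacle you isolate in your final paragraph, however, is genuine, and the paper passes over it in silence; note that it also contradicts your own opening claim that $CIG$ enters Theorem \ref{thm.7} only through \eqref{eq99}--\eqref{eq102}. In the proof of Theorem \ref{thm.7} the vanishing statements (a)--(c) and the identification \eqref{eq123} are all obtained from the Universal Coefficient Formula of the \emph{arbitrary} theory $H_*$ at the non-polyhedral spaces and pairs $\mathcal{N}(X)$, $\mathcal{N}^n(X)$, $\left(\mathcal{N}^n(X),\mathcal{N}^{n-1}(X)\right)$ (see \eqref{eq94}, \eqref{eq105}, \eqref{eq117}), and that $UCF$ is furnished by Theorem \ref{thm.6}, i.e.\ precisely by $CIG$; San's homotopy equivalence concerns only $X$ and $\mathcal{N}(X)$ and says nothing about the skeleta. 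Your proposed repair does not fill this hole: the computations you invoke (\eqref{eq105'}, \eqref{eq119}, \eqref{eq120} with injective $G'$) are statements about the cohomology $\bar{H}^*_N$ alone, so they reconstruct only the right-hand side of \eqref{eq123}; they cannot identify the left-hand side $H_n\left(\mathcal{N}^n(X),\mathcal{N}^{n-1}(X);G'\right)$, the value of the unknown theory $H_*$ at a pair where the hypotheses of Corollary \ref{Cor.9} --- Eilenberg--Steenrod axioms and $UCF$ only on $\mathcal{K}^2_{Pol}$, exactness and the bifunctor property globally --- give no control at all. (There is even a prior question of whether $\mathcal{N}^n(X)$, an inverse limit of polyhedra, lies in $\mathcal{K}_{Par}$ so that $H_*$ is defined on it.) What is missing, in your proposal and equally in the paper's two-sentence proof, is an argument establishing (a)--(c) and \eqref{eq123} for $H_*$ at these skeletal pairs without $CIG$; that is where the real content of \cite{San} must lie, and neither a cohomological rerouting nor the homotopy equivalence $X\simeq\mathcal{N}(X)$ supplies it.
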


Note that the given uniqueness theorem is independent of the dimension axiom. Therefore, it is true for an extraordinary homology theory as well.

\bibliographystyle{elsarticle-num}

\end{document}